\documentclass[11pt,letterpaper,reqno]{amsart}
\usepackage[margin=1in]{geometry}
\usepackage{color}
\usepackage[latin1]{inputenc}
\usepackage{amsmath,amsfonts,amsthm,epsfig,graphicx,amssymb}
\usepackage{esint}
\usepackage{caption}
\usepackage{stmaryrd}
\usepackage[normalem]{ulem}
\usepackage[colorlinks=true, linkcolor=blue, citecolor=blue]{hyperref}
\usepackage{overpic}

\newcommand{\N}{\mathbb{N}}

\newcommand{\R}{\mathbb{R}}

\newcommand{\e}{\varepsilon}

\newcommand{\pa}{\partial}

\newcommand\restr[2]{{
		\left.\kern-\nulldelimiterspace 
		#1 
		\right|_{#2} 
}}

\theoremstyle{plain}
\newtheorem{theorem}{Theorem}[section]
\newtheorem{lemma}[theorem]{Lemma}
\newtheorem{corollary}[theorem]{Corollary}
\newtheorem{proposition}[theorem]{Proposition}
\newtheorem*{theorem*}{Theorem}
\newtheorem*{corollary*}{Corollary}

\theoremstyle{definition}

\newtheorem{remark}[theorem]{Remark}

\newtheorem*{notation*}{Notation}

\numberwithin{equation}{section}
\numberwithin{figure}{section}

\allowdisplaybreaks

\usepackage{graphicx}

\title{Convergence of semilinear parabolic flows with general initial data}
\author{Daniel Restrepo}
\address{Department of Mathematics, Johns Hopkins University, Baltimore, MD, United States of America}
\email{drestre1@jh.edu}

\begin{document}

\begin{abstract} {\rm 
We analyze the long-time behavior of solutions to semilinear parabolic equations in the Euclidean space arising as gradient flows of an energy functional $J$. We prove that, for general initial data --including those with non-compact support-- the flow converges to a unique ground state of $J$. The argument relies on a sharp stability estimate for almost critical points of \(J\), providing a flexible framework to prove convergence of gradient flows associated with constrained minimization problems in \(\mathbb{R}^n\). As an application we strengthen the convergence results of \cite{cortazar1999uniqueness, feireisl1997threshold}. }
	\end{abstract}
	
	\maketitle

    \section{Introduction}
\subsection{Overview} This paper is concerned with the convergence as $t \to \infty$ of non-negative solutions $u = u(t,x)$ to the Cauchy problem
\begin{equation}\label{eq generalflow}
\begin{cases}
\partial_t u=   \Delta u - f(u) \quad \text{in} \,\, (0,\infty)\times \R^n, \\
 u(t,x) \to 0, \hspace{1.5cm}  \text{as $|x| \to \infty$},\\
u(.,0)=u_0.
\end{cases}
\end{equation}
Here $f$ is a function that meets some standard growth and regularity conditions so that problem \eqref{eq generalflow} can be recast as the $L^2$-gradient flow associated with the energy functional
\begin{equation}\label{eq energy}
    J(v) = \int_{\R^n} \frac{1}{2} |\nabla v|^2 + F(v),
\end{equation}
where $F(v) = \int_0^v f(s)ds$. In this case, one has that $J$ is decreasing along solutions and, furthermore, 
\begin{equation}\label{eq derentropy}
    \frac{d}{dt} J(u(t,\cdot)) = - \int_{\R^n} (\partial_t u)^2
\end{equation}
 natural question is whether solutions of \eqref{eq generalflow} converge, as $t \to \infty$, to a non-negative solution (ground state) of the associated stationary problem
\begin{equation}\label{eq gs}
\begin{cases}
\Delta \xi = f(\xi) \quad  \R^n, \\
 \xi(x) \to 0, \quad  \text{as $|x| \to \infty$}.\\
\end{cases}
\end{equation}
A key difficulty in addressing this question in $\R^n$ arises from the translation invariance of $J$, which implies lack of compactness of the set of stationary solutions and allows for bubbling phenomena along the flow. To illustrate this issue, assume that
\[
\sup_{t>1}\|u(t)\|_{W^{1,2}(\R^n)} < \infty,
\qquad
\|\partial_t u(t)\|_{L^2(\R^n)} \to 0
\quad \text{as } t\to\infty.
\]
By local weak compactness in $W^{1,2}(\R^n)$, there exist times $t_k \to \infty$ such that $u(t_k,\cdot)$ converges locally in $L^2$ to some solution $\xi_0$ of \eqref{eq gs}. However, this result is unsatisfactory: it only yields subsequential convergence and does not exclude loss of $L^2$--mass at infinity. Under suitable assumptions on $f$ (see \cite{lions1988positive, feireisl1997long, feireisl1997threshold}), concentration--compactness theory refines this description. One obtains that $u_k$ converges to a superposition of solutions $\{\xi_i\}_{i=1}^M$ of \eqref{eq gs}, i.e.,
\begin{eqnarray}\notag
   &&\lim_{k\to \infty} \Big\Vert u_k  -\sum_{i=1}^M \xi_i(\cdot - x_k^i) \Big \Vert_{L^2(\R^n)} =0,\\ \label{eq bubbling}
   &&\lim_{k\to \infty} |x^i_k -x^j_k| \to \infty \qquad \mbox{for $i\neq j$, $i,j =1, \cdots, M.$}
\end{eqnarray}
Statements like \eqref{eq bubbling} are known in the literature as \emph{bubbling phenomena} and reflects the lack of compactness of Palais--Smale sequences in certain variational problems in $\R^n$. For instance, in the context of functional inequalities, it is known that almost critical points to the Sobolev embedding in the Euclidean space are not necessarily close to Talenti bubbles (i.e., the class of functions that saturates Sobolev inequality) but to supperpositions of them, see \cite{struwe1984global}. In the parabolic setting, \eqref{eq bubbling} arises by viewing $\partial_t u(t_k)$ as a vanishing perturbation of the elliptic equation $\nabla J(u) = \partial_t u$.

The decomposition \eqref{eq bubbling} is particularly useful when \emph{a priori} information on $u_0$ prevents bubbling (e.g., compact support or energy below a threshold; see \cite{feireisl1997long, feireisl1997threshold, bonforte2024asymptotic}). Whether bubbling may occur for general initial data was posed as an open problem in \cite{feireisl1997long}. The following theorem provides a partial answer.

\begin{theorem}\label{thm1}
Let $n\geq 3$ and \begin{equation}\label{eq tildef}
f(t) = a_0t -\sum_{l=1}^La_i t^{p_l},  
\end{equation}
where $a_i\geq 0$, $a_0>0$, $\sum_{i=1}^La_i>0$, and $1<r<p_i < \frac{n}{n-2}$. Let $u_0 \in W^{1,2}(\R^n)$ with $u_0 \geq 0$ almost everywhere in $\R^n$ and let $u$ be a solution to \eqref{eq generalflow}. Then, $u$ is the unique solution to \eqref{eq generalflow}, $u(t,x)>0$ for $t>0$, and either \begin{equation}\label{eq zerop}
 \lim_{t\to \infty}J(u(t, \cdot))=0, \qquad  \lim_{t\to \infty}\Vert u( t, \cdot)\Vert_{W^{2,2}(\R^n)\cap C^2(\R^n)} \to 0 
\end{equation}
or
\begin{equation}\label{eq nonzero}
   J(u(t, \cdot))-J(\xi) \leq C e^{-\frac{t}{C}}, \qquad \Vert u(t, \cdot)-\tau_{x^*}[\xi]\Vert_{W^{2,2}(\R^n)\cap C^2(\R^n)} \leq C e^{-\frac{t}{C}},
\end{equation}
$\xi$ is the unique radial positive solution of \eqref{eq gs} and $x^* \in \R^n$, and where $C>0$ depends on $n$, $u_0$, and $f$.
\end{theorem}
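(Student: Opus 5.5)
The plan is a gradient-flow \L{}ojasiewicz-type argument, resting on a quantitative stability estimate for almost critical points of $J$ near the manifold of translates of $\xi$. First the routine part. Since $f$ is Lipschitz on bounded sets of $[0,\infty)$ and the nonlinearity $-f(u)=-a_0u+\sum a_l u^{p_l}$ is subcritical ($p_l<\frac{n}{n-2}$), standard semigroup theory in $W^{1,2}$ gives local existence and uniqueness. The strong maximum principle gives $u>0$ for $t>0$ unless $u_0\equiv 0$. From \eqref{eq derentropy}, $J(u(t))$ is nonincreasing. Because $F(v)=\frac{a_0}{2}v^2-\sum\frac{a_l}{p_l+1}v^{p_l+1}$ and $p_l+1<2^*$, we can combine Gagliardo--Nirenberg with the finiteness of the $L^1$-in-time norm of $\|\pa_t u\|_{L^2}^2$. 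This should yield uniform $W^{1,2}$ bounds, hence global existence. It also rules out blow-up: finite-time blow-up would force $J\to-\infty$, which contradicts the lower bound along the flow coming from the energy identity together with $L^2$ control, $\frac{d}{dt}\|u\|_2^2=-4J+\text{(positive multiple of the nonlinear term)}$. Parabolic regularity then upgrades these bounds to uniform $W^{2,2}\cap C^2$ bounds for $t\ge1$. Finally $\int_1^\infty\|\pa_t u\|_2^2<\infty$, so along a sequence $t_k$ we have $\|\nabla J(u(t_k))\|_{L^2}\to0$.

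Next, apply the profile decomposition \eqref{eq bubbling} along $t_k$. By the Gidas--Ni--Nirenberg / Kwong-type uniqueness for this $f$, every positive solution of \eqref{eq gs} is a translate of $\xi$. Hence $u(t_k)\approx\sum_{i=1}^M\tau_{x_k^i}\xi$, and $J(u(t_k))\to MJ(\xi)$. If $M=0$, then $J\to0$, and the $L^2$ decay combined with the $W^{2,2}\cap C^2$ bounds and interpolation gives \eqref{eq zerop}; every time is handled because $J$ is monotone and the smallness of $u$ is propagated by the linear damping $a_0>0$. The heart of the matter is to exclude $M\ge2$ and to get exponential convergence when $M=1$.

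For $M\ge2$ I would use the positivity and monotonicity structure. Write the $M$-bubble energy as $MJ(\xi)-\sum_{i\ne j}c\,\omega(|x^i-x^j|)+o(\cdot)$, with interaction $\omega(d)\sim d^{-(n-1)/2}e^{-\sqrt{a_0}d}$. Since $f$ is attractive between positive bubbles ($f(a+b)<f(a)+f(b)$ for $a,b>0$), the interaction lowers the energy. The key tool is a sharp stability estimate: for $v$ $W^{1,2}$-close to a superposition of $M$ well-separated bubbles,
\[
\|\nabla J(v)\|_{L^2}\gtrsim \operatorname{dist}(v,\mathcal M_M)+\sum_{i\neq j}\omega(|x^i-x^j|),
\]
where $\mathcal M_M$ is the $M$-bubble manifold, which is nondegenerate modulo translations. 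Combining this with the energy identity, the flow has a gradient of size at least the interaction. That forces the separations to stay bounded over time, since $\int\|\pa_t u\|^2<\infty$ while motion toward separation costs energy. This contradicts $|x_k^i-x_k^j|\to\infty$. Equivalently, if $M\ge 2$ then $J(u(t))$ would have to drop strictly below $MJ(\xi)$ by a definite amount and never return, contradicting the limit. I expect this step to be the main obstacle, because making the lower bound for $\|\nabla J\|$ uniform in the separation requires a careful Lyapunov--Schmidt reduction with exponentially small error terms. My first attempt is to test $\nabla J(v)$ against $\pa_{x_j}\tau_{x^i}\xi$, which extracts the interaction gradient exactly.

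For $M=1$, nondegeneracy ($\ker(-\Delta+f'(\xi))=\operatorname{span}\{\pa_j\xi\}$) yields the \L{}ojasiewicz inequality with exponent $\frac12$ near the orbit:
\[
J(v)-J(\xi)\le C\|\nabla J(v)\|_{L^2}^2 .
\]
Then
\[
\frac{d}{dt}\big(J(u)-J(\xi)\big)=-\|\pa_tu\|^2\le -\tfrac1C\big(J(u)-J(\xi)\big),
\]
which gives exponential decay of the energy once $u$ enters the neighborhood. Note $J(u(t))\ge J(\xi)$ there by local minimality modulo the unstable direction, a point handled by the constraint/Nehari structure in the stability estimate. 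Exponential decay then bounds the path length, $\int_t^\infty\|\pa_tu\|_2\le Ce^{-t/C}$, so $u(t)$ converges in $L^2$ to a single translate $\tau_{x^*}\xi$ and stays in the neighborhood for all later times. Interpolating with the uniform higher-norm bounds gives \eqref{eq nonzero} in $W^{2,2}\cap C^2$.
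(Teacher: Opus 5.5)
You have a genuine gap exactly where you expect the difficulty: the exclusion of $M\ge 2$. The mechanism you propose does not close. A lower bound $\Vert\partial_t u\Vert_{L^2}\gtrsim\sum_{i\neq j}\omega(|x^i-x^j|)$ together with $\int_T^\infty\Vert\partial_t u\Vert_{L^2}^2\,dt<\infty$ only gives $\int_T^\infty\omega(d(t))^2\,dt<\infty$. That is compatible with $d(t)\to\infty$ (e.g.\ $d(t)=c\log t$ with $c$ large), so it does not force the separations to stay bounded.

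Your alternative, that $J(u(t))$ would drop below $MJ(\xi)$ ``by a definite amount'', also fails, because the interaction energy is of order $\omega(d)\to 0$. Attraction alone is not a contradiction either. Since $J(u(t))\ge MJ(\xi)>J(\eta)$, the remainder $\rho$ simply carries the missing energy while it dissipates. What excludes bubbling is a \emph{rate}: you need $\Vert\partial_t u\Vert_{L^2}\in L^1(T,\infty)$, i.e.\ finite $L^2$ path length, not merely $L^2(T,\infty)$.

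The paper gets this rate by proving the \L{}ojasiewicz inequality $J(u)-MJ(\xi)\le C\Vert\partial_t u\Vert_{L^2}^2$ near $M$-bubbles for \emph{every} $M$ (Lemma~\ref{lemma linearestimate}), not only for $M=1$. Three ingredients go into it.
\begin{enumerate}
\item An upper expansion of the deficit around the best-matching weighted bubble $\eta=\sum_i\alpha_i\tau_{x^i}[\xi]$ (Lemma~\ref{lemma difMbubble}). Here the sign of the tail interaction (concavity of $f$ near $0$, Lemma~\ref{lemma tailconcavity}) lets one discard all \emph{linear} interaction terms. So $J(u)-MJ(\xi)+\int 2|\nabla\rho|^2+f'(\eta)\rho^2$ is bounded by $\Vert\partial_tu\Vert\,\Vert\rho\Vert$ plus quadratic quantities.
\item The linear control $\sum_i|1-\alpha_i|+\sum_{i<j}\int\tau_{x^i}[\xi]\tau_{x^j}[\xi]\le\delta\Vert\rho\Vert+C\Vert\partial_tu\Vert$ (Lemma~\ref{lemma weakinter}). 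This is essentially your gradient lower bound.
\item Coercivity of the quadratic form at $\eta$ under orthogonality to the translations and to $D^2J(\tau_{x^i}[\xi])(\tau_{x^i}[\xi'],\cdot)$ (Lemma~\ref{lemma multibybble}).
\end{enumerate}
Exponential decay of $J(u)-MJ(\xi)$ then gives finite path length and hence strong $L^2$ convergence of $u(t)$. That is incompatible with two or more bubbles drifting apart, so $M=1$ comes out of the same argument, and your separate $M=1$ step is just its special case.

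Note also that testing against $\partial_j\tau_{x^i}[\xi]$ extracts the net force on the $i$-th bubble, but that force can vanish (the middle one of three equally spaced collinear bubbles). A lower bound therefore needs a choice of bubble and direction. The paper picks an extremal center and a direction $e$ such that all other centers lie in a cone around $e$ (Lemma~\ref{lemma ch}). It uses $f(t)\le f'(0)t$ to give the contributions a sign, then removes that center and inducts.

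Two smaller errors.
\begin{itemize}
\item Your claim that finite-time blow-up is ruled out is false. For this focusing $f$, large data blow up (Remark~\ref{reramrk convergence}). The identity $\frac{d}{dt}\Vert u\Vert_2^2=-4J+(\text{positive})$ is the blow-up mechanism once $J<0$, not a contradiction. The theorem presupposes a global solution, and uniform bounds for such solutions have to be imported (the paper relies on Feireisl--Petzeltov\'a).
\item $J(u(t))\ge J(\xi)$ follows from monotonicity and the value of the limit, not from any local minimality. $\xi$ is a saddle with one negative direction.
\end{itemize}
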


\begin{remark}\label{remark compactly}
There is a substantial literature on the asymptotic behavior of solutions to semilinear heat equations in bounded domains; see \cite{busca2002convergence} and references therein. Many of these techniques do not extend directly to unbounded domains because of the lack of compactness. In the case of $\R^n$, additional difficulties arise due to the translation invariance of the energy functional $J$.

 A standard approach to regain compactness is to assume that $u_0$ is compactly supported. Combined with the maximum principle or the moving planes method, this ensures uniform decay of solutions along the flow \cite{feireisl1997long, cortazar1999uniqueness, busca2002convergence, feireisl1997threshold, foldes2011convergence}. This assumption, together with the decomposition \eqref{eq bubbling}, reduces the problem to studying the asymptotic behavior around a single stationary solution. Under these conditions, uniqueness of the limiting profile and convergence rates follow, provided that the stationary problem \eqref{eq gs} has a unique solution $\xi$ (up to translations) and that the kernel of $D^2J(\xi)$ is generated by $\{\partial_i \xi\}_{i=1}^n$. Similar results were obtained in \cite{cortazar1999uniqueness} using different methods but still assuming compactly supported initial data.
 
Remarkably, under the minimal assumptions $f \in C^1$, $f(0)=0$, and $f'(0)>0$, full convergence to a unique stationary profile was shown in \cite{busca2002convergence, foldes2011convergence}, again assuming compact support of the initial datum. These results do not require uniqueness or non-degeneracy of stationary solutions, but they do not provide rates of convergence, not even algebraic.
\end{remark}

\begin{remark}\label{remark hypf}(On the structure of $f$)
   Although Theorem \ref{thm1} assumes the specific form \eqref{eq tildef}, our methods apply to a broader class of reaction terms. In general, we work in the framework where the hypotheses $f \in C^{1,\beta}_{\rm loc}([0,\infty))$, $f(0)=0$, and $f'(0)>0$ are satisfied. Under these general assumptions, ground states are radially symmetric (up to translations) and have a precise asymptotic profile at infinity; see Proposition \ref{prop radialuniqueness}.

   In this setting, convergence of global solutions starting from compactly supported initial data was established in \cite{busca2002convergence} (see Remark \ref{remark compactly}). Our approach for non-compactly supported initial data is variational: convergence is proved by showing that the solution approaches a unique ground state minimizing a constrained variational problem. For our choice of $f$, this follows from Berestycki and Lions' theory \cite{berestycki1980existence}, which covers a broader class of nonlinearities. 

   We also require: (i)  uniqueness of bounded ground states up to translations (see Section \ref{sec background}); and (ii)  non-degeneracy of $D^2J$ at ground states (see Proposition \ref{thm nondegneracy}). These two properties form the variational framework underpinning our methods.  

   Finally, we impose two additional conditions on $f$:
\begin{equation}\label{eq KPP} \tag{H1}
t f'(0) \ge f(t), \quad t \in [0, \max \xi],
\end{equation}
where $\xi$ is the unique radial ground state; and
\begin{equation}\label{eq concat0}\tag{H2}
f \text{ is concave in } (0,\delta) \text{ for some } \delta>0.
\end{equation}
These ensure it is energetically unfavorable for the flow to split into multiple solutions at infinity due to tail interactions (see Lemmas \ref{lemma difMbubble} and \ref{lemma weakinter}). Hypotheses \eqref{eq KPP}--\eqref{eq concat0} are satisfied by \eqref{eq tildef} and by $f(t) = t - t^p$, $p \in (1, (n+2)/(n-2))$, considered in \cite{cortazar1999uniqueness}.  
   A generalized version of Theorem \ref{thm1} for reaction terms satisfying these general hypotheses is given in Theorem \ref{thm gen} at the end of Section \ref{sec background}.
\end{remark}

\begin{remark}\label{reramrk convergence}(Convergence to a ground state as a threshold phenomena)   	 
	 Global solutions of \eqref{eq generalflow} converging to a ground state are rare; they arise as threshold trajectories separating solutions that vanish at infinity from those that blow up in finite time. More precisely, \cite[Theorem 1.1]{feireisl1997threshold} shows that for any compactly supported $u_0 \in W^{1,2}(\R^n)$ there exists $\alpha(u_0) \in (0,\infty)$ such that the solution with initial datum $\alpha u_0$ satisfies:
\begin{itemize}
\item vanishes as $t\to \infty$ if $\alpha \in [0,\alpha(u_0))$;
\item converges to a ground state and satisfies \eqref{eq nonzero} if $\alpha = \alpha(u_0)$;
\item blows up in finite time if $\alpha > \alpha(u_0)$.
\end{itemize}
Our results extend this trichotomy to initial data without compact support; the proof follows exactly as in \cite{feireisl1997threshold}.
\end{remark}

\subsection{Extension to volume constrained flows.} The strategy used in the proof of Theorem \ref{thm1} can be adapted to establish convergence for gradient flows associated with volume-constrained variational problems in $\R^n$, even when the resulting equation is non-autonomous due to the presence of Lagrange multipliers. 

As a concrete example, consider the volume-constrained problem
\begin{equation}
\label{Psi eps m}
\Psi(\varepsilon,m) = \inf \Bigg\{\mathcal{AC}_\e(u): \int_{\R^n} V(u) = m, \, u \in L^1_{\rm loc}(\R^n;[0,1]) \Bigg\}, \qquad \varepsilon, m>0,
\end{equation}
where the energy functional is the Allen--Cahn energy
\begin{equation}
\label{AC eps}
\mathcal{AC}_\e(u) = \varepsilon \int_{\R^n} |\nabla u|^2 + \frac{1}{\varepsilon} \int_{\R^n} W(u), \qquad \varepsilon>0.
\end{equation}
Here $W$ is a non-degenerate double-well potential, $W:[0,1] \to [0,\infty)$, satisfying
\begin{equation}\label{eq usualW}
W(0)=W(1)=0, \quad W>0 \text{ on } (0,1), \quad W''(0), W''(1) >0,
\end{equation}
and the volume potential is given by $V(t) = \big(\int_0^t \sqrt{W}\,ds \big)^{n/(n-1)}$. Problem \eqref{Psi eps m} provides a diffuse interface approximation to the Euclidean isoperimetric problem (when $\varepsilon \ll m^{1/n}$) and has been studied in detail in \cite{maggi2024uniform}. The particular choice of $V$ is linked to the classical isoperimetric inequality; its key properties are discussed extensively in the introductions of \cite{maggi2024uniform, bonforte2024asymptotic}.  

The gradient flow associated with \eqref{Psi eps m} is given by the Cauchy problem
\begin{equation}
\label{diffused VPMCF} 
\left\{
\begin{aligned}
&\partial_t u = 2 \Delta u - \frac{1}{\varepsilon^2} W'(u) + \frac{\lambda_\varepsilon[u(t)]}{\varepsilon} V'(u), \quad \text{in } (0,\infty)\times \R^n , \\
&u(0) = u_0 \in W^{1,2}(\R^n;[0,1]),
\end{aligned}
\right.
\end{equation}
where the Lagrange multiplier $\lambda_\varepsilon[u]$ is defined by
\begin{equation*}
\lambda_\varepsilon[v] := \frac{\int_{\R^n} 2 \varepsilon^2 |\nabla v|^2 V''(v) + W'(v) V'(v)}{\varepsilon \int_{\R^n} (V'(v))^2}.
\end{equation*}
This choice ensures preservation of the volume constraint:
\begin{equation*}
\int_{\R^n} V(u(t)) = \int_{\R^n} V(u_0), \quad \forall t>0.
\end{equation*}
The flow \eqref{diffused VPMCF} is the natural gradient flow associated with \eqref{Psi eps m}, in the sense that
\begin{equation*}
\frac{d}{dt} \mathcal{AC}_\e(u(\cdot,t)) = - \varepsilon \int_{\R^n} (\partial_t u)^2.
\end{equation*}

From the perspective of classical parabolic theory, \eqref{diffused VPMCF} exhibits several peculiar features. It is a non-autonomous semilinear PDE, where the non-autonomy stems from the Lagrange multiplier $\lambda_\varepsilon[u(t)]$. Nevertheless, the underlying variational structure provides sufficient rigidity to establish a subsequential bubbling result analogous to \eqref{eq bubbling}. Moreover, when $u_0$ is compactly supported, one can prove full convergence of solutions to minimizers of \eqref{Psi eps m} for $m = \int_{\R^n} V(u_0)$ and $\varepsilon$ satisfying $\varepsilon << m^\frac{1}{n}$; see \cite[Theorems 1.3--1.4]{bonforte2024asymptotic}.  

Observing that, by \eqref{eq usualW}, the nonlinearity
\[
f(t,s) := \frac{1}{\varepsilon^2} W'(s) - \frac{\lambda_\varepsilon[u(t)]}{\varepsilon} V'(s)
\]
satisfies the expansion
\[
f(t,s) = C s + O(s^{1+\beta}), \quad C>0,
\]
we see that it formally falls into the class of nonlinearities treated by Theorem \ref{thm1} and is therefore amenable to the same proof strategy. More precisely, if $W \in C^3[0,1]$ satisfies the standard non-degeneracy conditions together with
\begin{equation}\label{eq assumtion convexity}
W''(0) \ge \frac{W'(t)}{t}, \quad \forall t \in [0,1], 
\qquad \text{and} \qquad W'''(0) < 0,
\end{equation}
which correspond to \eqref{eq KPP} and \eqref{eq concat0}, then for $n \ge 2$ and $\varepsilon$ sufficiently small (depending only on $\mathcal{AC}_\e(u_0)$, $m$, $W$, and $n$), one can prove full convergence of solutions of \eqref{diffused VPMCF} to a unique minimizer of \eqref{Psi eps m}, without restrictions on the support of $u_0$. In this case, the convergence is exponential as in \eqref{eq nonzero}, with constants degenerating as $\varepsilon \to 0$.

We do not pursue the proof of this statement here. Although the overall strategy parallels that of Theorem~\ref{thm1}, the presence of the Lagrange multiplier $\lambda_\e[u(t)]$ creates a structural discrepancy between the parabolic equation satisfied by $u$ and the elliptic equation satisfied by the minimizers of \eqref{AC eps}. Addressing this mismatch requires several minor but technically involved modifications of the argument.

\subsection{Strategy of the proof}\label{ss:mainresults}

Our argument is based on two main ideas: 
\begin{enumerate}
    \item[(1)] Controlling the rate of decay of $\Vert \partial_t u\Vert_{L^2(\R^n)}$ as $t \to \infty$ ensures strong convergence of the flow, thereby preventing bubbling.
    \item[(2)] Viewing the solution $u$ to \eqref{eq generalflow} as a nearly critical point of the energy functional $J$ allows us to perform a quantitative stability analysis.
\end{enumerate}
A crucial ingredient for the implementation of this strategy is the non-degeneracy of solutions to the stationary problem \eqref{eq gs}, including uniqueness up to translations. These properties are guaranteed by our choice of the nonlinearity $f$ in Theorem \ref{thm1} (see Section \ref{sec background}).\\

The first point is a general principle for parabolic flows: \eqref{eq nonzero} essentially follows from
\begin{equation}\label{eq linear estimateog} 
   J(u(t,\cdot)) - M J(\xi) \le C \Vert \partial_t u \Vert^2_{L^2(\R^n)},
\end{equation}
as established in Section \ref{Proof thm1}. The second point is a property of Palais--Smale sequences for elliptic problems. Writing \eqref{eq generalflow} as $\nabla J(u) = \partial_t u$, the general concentration-compactness theory implies that as $\partial_t u \to 0$, the solution $u$ approaches a superposition of stationary solutions in the sense of \eqref{eq bubbling}.  

By performing a quantitative stability analysis of critical points of $J$, in the spirit of the theory of functional inequalities \cite{figalli2020sharp, aryan2023stability, deng2025sharp}, we can strengthen \eqref{eq bubbling} to
\begin{equation}\label{eq linearerror}
    \Big\Vert u(t,\cdot) - \sum_{i=1}^M \alpha_i(t) \tau_{x_i(t)}[\xi] \Big\Vert_{W^{1,2}(\R^n)} \le C \Vert \partial_t u \Vert_{L^2(\R^n)},
\end{equation}
for sufficiently large $t$, where $x_i(t) \in \R^n$ and $\alpha_i(t) \in \R$ are suitable parameters, and $\tau_{x_i(t)}[\xi]$\footnote{We set $\tau_{x_0}[v](x)=v(x-x_0)$ for every $x,x_0\in\R^n$ and $v:\R^n\to\R^m$.} are translations of the unique positive solution $\xi$ of \eqref{eq gs}. The linear estimate \eqref{eq linearerror}, combined with precise quantitative bounds on the errors introduced by these parameters, leads directly to \eqref{eq linear estimateog} (see Section \ref{subsec linear stability}).\\

There are several significant differences between our approach and previous stability analyses of critical points in functional inequalities. First, since $f'(0) > 0$ and $f$ is non-homogeneous, estimating the error terms in \eqref{eq linearerror}, namely
\begin{equation}\label{eq error par}
    \sum_{i=1}^M |1-\alpha_i| + \sum_{1 \le i < j \le M} \int_{\R^n} \tau_{x_i(t)}[\xi] \tau_{x_j(t)}[\xi],
\end{equation} 
is more involved than in \cite{figalli2020sharp, aryan2023stability, deng2025sharp}, and requires new ideas, as detailed in Lemma \ref{lemma weakinter}. Second, while quantitative stability for functional inequalities typically yields convergence rates of known unique limiting profiles \cite{ciraolo2018quantitative, figalli2020sharp, de2023stability, bonforte2025stability}, in our setting both the uniqueness of the limit and the convergence rate are established simultaneously.  

Finally, we remark that our approach can also be adapted to prove general linear stability estimates for critical points of $J$, i.e., for non-negative solutions $v$ of $\nabla J(v) = h$ with $h \in H^{-1}(\R^n)$, in the spirit of \cite{figalli2020sharp, aryan2023stability, deng2025sharp}.

\subsection{Organization of the paper}

In Section \ref{sec background} we collect some properties of solutions to \eqref{eq bubbling} required for our analysis. Section \ref{sec proofmain} contains the main body of the paper including the proof of Theorem \ref{thm1}. Finally, in Section \ref{sec interest} we prove some technical results that are used throughout the proofs.

\section{Background material}\label{sec background}

In this section, we recall some qualitative properties of ground state solutions to \eqref{eq gs} relevant for our methods. As mentioned in the introduction, the general class of reaction terms considered in this paper are of the form $f(t) = m^2t+O(t^p)$ as $t\to 0^+$ with $m>0$ and $p>1$. We begin this section by pointing out that the classical results of \cite{gidas1981symmetry} apply in this case, yielding radial symmetry and precise decay rates for solutions to \eqref{eq gs}.

\begin{proposition}\label{prop radialuniqueness} 
    Let $\beta\in (0,1)$, $n\geq 2$ and let $f \in C_{loc}^{1,\beta}([0,\infty))$ satisfying $f(0)=0$, $f'(0)=m^2>0$. Assume that $v$ is a $C^2$ solution to
\begin{equation}\label{eq gsap0}
\begin{cases}
\Delta v = f(v) \quad  \R^n, \\
 v(x) \to 0, \quad  \text{as $|x| \to \infty$}.
\end{cases}
\end{equation} 
Then, $v=\tau_{x_0}[\xi]$ for some $x_0\in\R^n$ with $\xi$ strictly radially decreasing. Moreover, 
	\begin{equation}\label{eq tail zeta}
			\frac{1}{C} \frac{e^{-|x|/m}}{\max\{|x|^\frac{n-1}{2},1\}}	\leq  \xi(x) \leq C\frac{e^{-|x|/m}}{\max\{|x|^\frac{n-1}{2},1\}}
	\end{equation}
	and we have that given any $r_0>0$ there exists $C(r_0)$ such that for $r> r_0$
		\begin{equation}\label{eq tail zeta'} 
		\frac{1}{C(r_0)} \xi(x)\leq  -\xi'(|x|) \leq C(r_0)\xi(x) \quad \mbox{  for $|x|\geq r$}.
	\end{equation}
\end{proposition}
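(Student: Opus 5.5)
The plan is to reduce to the classical symmetry theorem of Gidas--Ni--Nirenberg for entire solutions in $\R^n$, and then read off the tail behaviour from the radial ODE. Throughout I take $v\geq 0$ and $v\not\equiv 0$, which is the situation relevant for ground states.

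First, positivity. Since $f\in C^{1,\beta}_{\rm loc}$ and $f(0)=0$, we can write $f(v)=c(x)v$ with $c$ bounded. The strong maximum principle applied to $-\Delta v+c(x)v=0$ then gives $v>0$ everywhere.

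Second, a priori exponential decay, which is needed to check the hypotheses of \cite{gidas1981symmetry}. Because $v\to 0$ at infinity and $f'(0)=m^2>0$, there is $R_0$ such that $f(v)\geq (m^2-\e)v$ for $|x|\geq R_0$. So $v$ is a subsolution of $\Delta w=(m^2-\e)w$ in the exterior domain $\{|x|>R_0\}$. Comparing with the radial barrier $A\,|x|^{-(n-1)/2}e^{-\sqrt{m^2-\e}\,|x|}$ (or the Bessel function $K$), and using $v\to 0$ so the maximum principle applies on unbounded annuli, gives $v\lesssim e^{-(m-\e')|x|}$. Feeding this back shows $|f(v)-m^2v|\lesssim v^{1+\beta}$ is exponentially small. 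Interior gradient estimates then give the same decay for $\nabla v$.

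Third, symmetry. With $f(v)=m^2v+O(v^{1+\beta})$ near $0$ and the decay from the second step, the Gidas--Ni--Nirenberg theorem for $\R^n$ (their Theorem 4, in the convention $\Delta u+g(u)=0$ with $g=-f$, so $g'(0)=-m^2<0$) applies. It yields that $v$ is radially symmetric about some $x_0$ and strictly decreasing in $r=|x-x_0|$. Set $\xi:=\tau_{-x_0}[v]$.

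Fourth, sharp asymptotics, via the substitution $w(r)=r^{(n-1)/2}\xi(r)$. The radial equation becomes
\[
w''=\Big(m^2+\tfrac{(n-1)(n-3)}{4r^2}+O(\xi^\beta)\Big)w=:(m^2+q(r))w,
\]
where $q$ is integrable on $[1,\infty)$, since $\xi^\beta$ decays exponentially and $r^{-2}$ is integrable. I would analyse the Riccati variable $\rho=w'/w$, which satisfies $\rho'=m^2+q-\rho^2$. Because $w\to 0$ and $w>0$, the only admissible behaviour is $\rho\to -m$. Integrability of $q$ gives $\int^\infty|\rho+m|\,dr<\infty$. Hence $w(r)=c\,e^{-mr}\exp\big(-\int_r^\infty(\rho+m)\big)$ with $c\in(0,\infty)$. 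This yields the two-sided bound \eqref{eq tail zeta} for large $r$, and compactness plus positivity handles bounded $r$. (The exponent should be read with the paper's normalisation of $m$.)

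Fifth, the derivative bound \eqref{eq tail zeta'}. We have
\[
\frac{\xi'}{\xi}=\rho-\frac{n-1}{2r}\to -m,
\]
so the ratio is bounded above and below for large $r$. On a compact range $[r_0,R]$, the strict monotonicity from the third step gives $\xi'<0$ there, while $\xi>0$. Continuity then bounds $-\xi'/\xi$ above and below by constants depending on $r_0$, and it degenerates only as $r_0\to 0$, where $\xi'(0)=0$.

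The main obstacle is the fourth step, specifically showing $c>0$, i.e. that $\rho$ cannot go to $+m$ and that the tail does not decay faster than the free Bessel rate. My first line of attack is the Riccati argument above. As a fallback I would use a comparison from below with $\delta\,|x|^{-(n-1)/2}e^{-\sqrt{m^2+\e}\,|x|}$ to rule out superexponential decay, and then bootstrap with integrability of $q$ to recover the exact rate.
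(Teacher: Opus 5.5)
Your argument is correct. It shares the paper's skeleton: symmetry from Gidas--Ni--Nirenberg, then the Emden--Fowler variable $w=r^{(n-1)/2}\xi$. The tail analysis, however, is genuinely different.

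\textbf{The paper's route.} It takes both symmetry and the two-sided exponential bound from Gidas--Ni--Nirenberg. It uses the ODE $w''=\big(\tfrac{(n-1)(n-3)}{4r^2}+\tfrac{f(\xi)}{\xi}\big)w$ only to get $\tfrac1C w\le w''\le Cw$ for large $r$. Integrating once, with $w'\uparrow 0$, gives $-w'\simeq w$. The identity $-\xi'=-w'r^{-(n-1)/2}+\tfrac{n-1}{2}wr^{-(n+1)/2}$ then gives the derivative bound.

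\textbf{Your route.} You use the citation only for symmetry and derive the tail yourself through the Riccati variable $\rho=w'/w$ with an integrable perturbation $q$. This is more self-contained and gives more: $r^{(n-1)/2}e^{mr}\xi(r)\to c\in(0,\infty)$ and $\xi'/\xi\to-m$. Your compactness step on $[r_0,R]$, using $\xi'<0$ for $r>0$, also covers arbitrary $r_0>0$, which the paper's large-$r$ argument leaves implicit. You are also right to impose $v\ge0$ and $v\not\equiv0$; the latter is genuinely needed, since $v\equiv 0$ is a solution.

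\textbf{Three small points.}

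First, what you call the main obstacle is already settled by your Step 4. With $\sigma=\rho+m$ you have $\sigma'=(2m-\sigma)\sigma+q$ and $\sigma\to0$, so $\sigma(r)=-\int_r^\infty q(t)\,e^{-\int_r^t(2m-\sigma)}\,dt$. Hence $|\sigma(r)|\le\int_r^\infty|q(t)|e^{-m(t-r)}\,dt$ for large $r$, and by Fubini $\int^\infty|\sigma|\le m^{-1}\int^\infty|q|<\infty$. The representation $w(r)=w(R)e^{mR}e^{-mr}\exp\big(\int_R^r\sigma\big)$ then forces $c>0$, so your fallback is unnecessary.

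Second, in $n=2$ the weighted barrier $\phi=|x|^{-1/2}e^{-\mu|x|}$ satisfies $\Delta\phi=\big(\mu^2+\tfrac{1}{4|x|^2}\big)\phi$, so it is a subsolution rather than a supersolution. Use $K_0$, or simply $e^{-\mu'|x|}$ with $\mu'<\mu$; that is all the preliminary decay needs.

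Third, your computation gives the rate $e^{-m|x|}$, which is the correct one for $f'(0)=m^2$. The $e^{-|x|/m}$ in the statement and in the paper's proof is a slip; the two rates are not comparable unless $m=1$. Say this outright rather than appealing to a normalisation.
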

\begin{proof}
   By \cite[Theorem 2]{gidas1981symmetry} we have that $v=\tau_{x_0}[\xi]$ for some $\xi$ strictly radially decreasing with respect to some point $x_0$. Furthermore, the same result guarantees
	\begin{equation}\label{eq exp bound GNN}
		\frac{1}{C} \frac{e^{-|x|/m}}{\max\{|x|^\frac{n-1}{2},1\}}	\leq  \xi(x) \leq C \frac{e^{-|x|/m}}{\max\{|x|^\frac{n-1}{2},1\}}.
	\end{equation}
     Consider $w(r) = r^\frac{n-1}{2}\xi(r)$ -this is the so-called Emden-Fowler change of variables. A direct computation in spherical coordinates (see, e.g., \cite[(3-36)]{maggi2024uniform}) shows that
    \begin{equation}\label{eq EF}
w''(r) = w\Big(\frac{a(a-1)}{r^2}+\frac{f(w)}{w}\Big),
    \end{equation}
where $a= \frac{n-1}{2}$. Since  $w(r) \sim e^{-r/m}$ in virtue of \eqref{eq exp bound GNN} and $f(w)= f'(0)w+O(w^\beta)$, we have that for $r_0$ large enough (depending only on $f$ and $n$, we have that
	\begin{equation}\label{eq comparison w}
 \frac{1}{C}	w(r) \leq	w''(r) \leq C w(r) \quad \mbox{if $r\geq r_0$}.
	\end{equation}
	
	So, by integration we also have that $-w'(r)$ is comparable with $e^{-\frac{r}{m}}$ and since
	\begin{equation*}
		-\xi'(r) = \frac{-w'(r)}{r^\frac{n-1}{2}} +\frac{n-1}{2}\frac{w(r)}{r^\frac{n+1}{2}},
	\end{equation*}
	we readily deduce \eqref{eq tail zeta'}.
\end{proof}

The existence of ground-state solutions to \eqref{eq gs} is guaranteed by the results in \cite{berestycki1980existence} under nearly optimal hypotheses for $n\geq 2$. Of particular relevance for our work is the fact that these ground states can be obtained as minimizers of a constrained problem in $\R^n$, at least when $n\geq 3$. We state a slightly weaker version of their theorem, see \cite[Theorem 1.1 and Section 1.3]{berestycki1980existence}.
\begin{proposition}\label{thm BL}
   Let $n\geq 3$. Let us assume that $f \in C_{loc}^{1,\beta}(\R)$ with $f(0)=0$, $f'(0)>0$, $F(s_0)<0$ for some $s_0>0$, and that $\liminf_{t\to \infty} \frac{f(t)}{t^\frac{n+2}{n-2}}\geq 0$. Then \eqref{eq gsap0} has a positive solution $\xi$ that is radially decreasing and, after rescaling, solves the constrained problem
   \begin{equation}\label{eqBL}
       \inf\left\{ \int_{\R^n}|\nabla v|^2 \, \Big| \int_{\R^n}F(v) =-\delta \right\},
   \end{equation}
   for some $\delta>0$. Furthermore, $J(\xi)>0$.
\end{proposition}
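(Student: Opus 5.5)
The statement is essentially a citation of Berestycki--Lions, so the plan is to check that the hypotheses of \cite[Theorem 1.1 and Section 1.3]{berestycki1980existence} follow from ours. The remaining work is to recover the extra conclusions: radial monotonicity, the rescaled constrained minimization \eqref{eqBL}, and $J(\xi)>0$.

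\textbf{Step 1: reformulate in the Berestycki--Lions convention.} They study $-\Delta u = g(u)$. Here $\Delta v = f(v)$, so $g=-f$ and $G=-F$. Their hypotheses translate as follows:
\begin{itemize}
\item $g(0)=0$ and $g'(0)=-f'(0)<0$, i.e. $\limsup_{s\to0^+} g(s)/s<0$;
\item subcritical growth at infinity, $\limsup_{s\to\infty} g(s)/s^{(n+2)/(n-2)}\le 0$, which is exactly $\liminf f(t)/t^{(n+2)/(n-2)}\ge 0$;
\item existence of $\zeta$ with $G(\zeta)>0$, i.e. $F(s_0)<0$.
\end{itemize}
Since only positive solutions matter, I would modify $f$ on $(-\infty,0)$ by odd extension, or by setting $g=0$ there, as they do. The maximum principle then shows that the resulting solution is positive.

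\textbf{Step 2: the constrained problem and the Pohozaev identity.} Their existence proof minimizes $\int|\nabla w|^2$ on $\{\int G(w)=1\}$ over radial functions, using Strauss compactness. A Schwarz symmetrization argument shows the radial minimizer also minimizes among all functions. In our notation this is the constraint $\int F(w)=-1$, which is \eqref{eqBL} with $\delta=1$. Any other $\delta$ is reached by the dilation $w(x)\mapsto w(x/\lambda)$: it multiplies $\int F$ by $\lambda^n$ and $\int|\nabla w|^2$ by $\lambda^{n-2}$, so minimizers correspond. The Lagrange multiplier rule gives $-\Delta w=\theta g(w)$ with $\theta>0$, and the rescaling $\xi(x)=w(x/\sqrt{\theta})$ removes $\theta$, producing a solution of \eqref{eq gsap0}. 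Radial decrease comes from the symmetrized minimizer, or alternatively from Proposition \ref{prop radialuniqueness}.

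\textbf{Step 3: positivity of the energy.} The Pohozaev identity for $\Delta\xi=f(\xi)$ in $\R^n$ reads
\[
\frac{n-2}{2}\int|\nabla\xi|^2 = -n\int F(\xi).
\]
It is justified by the exponential decay \eqref{eq tail zeta} and \eqref{eq tail zeta'}, which make the boundary terms vanish. Substituting into $J$ gives
\[
J(\xi)=\Big(\frac12-\frac{n-2}{2n}\Big)\int|\nabla\xi|^2=\frac1n\int|\nabla\xi|^2>0.
\]
This uses $n\ge3$ and the fact that $\xi\not\equiv0$.

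\textbf{Main obstacle.} The delicate point is the bookkeeping of the rescaling between the constrained minimizer and the actual solution. One must check that the multiplier $\theta$ is positive, and that after rescaling $\xi$ still minimizes \eqref{eqBL} for the appropriate $\delta=\theta^{n/2}$. This follows by tracking the two dilation exponents computed above. Everything else is a direct citation.
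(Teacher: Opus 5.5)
Your proposal is correct and takes the same route as the paper, whose proof consists only of citing Berestycki--Lions. Your hypothesis translation ($g=-f$, $G=-F$), the dilation bookkeeping ($\delta=\theta^{n/2}$, with $\theta>0$ coming from the same scaling/Pohozaev identity), and $J(\xi)=\frac1n\int_{\R^n}|\nabla\xi|^2>0$ are exactly the facts behind that citation. One caveat: Berestycki--Lions also cut $g$ off at its first zero beyond $\inf\{\zeta>0: G(\zeta)>0\}$ when $g$ does not stay positive there, so in full generality minimality for the original $F$ needs an extra word; no cutoff occurs for the $f$ of \eqref{eq tildef} that the paper actually uses.
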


By combining Proposition \ref{prop radialuniqueness} and Proposition \eqref{thm BL}, uniqueness of solutions of \eqref{eq gsap0} reduces to an ODE problem. There are several uniqueness resutls that hold under fairly general hypotheses on $f$- see, e.g., \cite{chen1991uniqueness, mcleod1993uniqueness}. In the interest of our example, we remark that \eqref{eq gsap0} has a unique solution if $f$ has the form \eqref{eq tildef}, see \cite[Example 2]{chen1991uniqueness}.\\

We also recall some properties of the spectrum of the second variation of $D^2 J(\xi)$.

\begin{proposition}\label{thm nondegneracy}
    Let $n\geq 3$ and assume that $f$ has the form \eqref{eq tildef}. Let $\xi$ be the unique radially symmetric solution of \eqref{eq gsap0} and consider the quadratic form
    \begin{equation*}
         Q(\varphi, \varphi):=  D^2J(\xi)(\varphi, \varphi)=\int_{\R^n} |\nabla \varphi|^2+f'(\xi)\varphi^2.
    \end{equation*}
    $Q$ satisfies the following properties:
    \begin{enumerate}
        \item Given any $\psi \in W^{1,2}(\R^n)$
          \begin{equation}\label{eq svxi'}
        Q(\xi', \psi)= - (n-1) \int_{\R^n} \frac{\xi' \psi}{|x|^2}dx,
    \end{equation}
    in particular $Q(\xi', \xi') <0$ and $Q(\xi', \xi)>0$.
        \item $Q$ has only one negative eigenvalue and its kernel is spanned by $\{\partial_i \xi\}_{i=1}^n$. 
        \item There exists $C>0$ such that 
        \begin{equation}\label{eq nondegJ}
      Q(\varphi, \varphi) \geq \frac{1}{C} \int_{\R^n} |\nabla \varphi|^2 +\varphi^2
    \end{equation}
    for $\varphi \in W^{1,2}(\R^n)$ satisfying
    \begin{equation}\label{eq orthoJ}
      Q(\varphi, \xi') =0, \quad \int_{\R^n} \varphi \partial_j \xi =0 \quad \forall \, j=1, \cdots , n.
    \end{equation}
    \end{enumerate}    
\end{proposition}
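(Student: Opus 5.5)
Here $\xi'$ denotes the radial derivative $\partial_r\xi = \frac{x}{|x|}\cdot\nabla\xi$, viewed as a function on $\R^n$. The plan is to compute $Q(\xi',\psi)$ explicitly by differentiating the equation $\Delta\xi=f(\xi)$, and then to combine that computation with standard spectral theory for Schr\"odinger operators whose potential tends to a positive constant at infinity.

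\emph{Part (1).} Write $\xi(x)=\xi(r)$. The radial ODE is $\xi''+\frac{n-1}{r}\xi'=f(\xi)$. Differentiating in $r$ gives
\[
(\xi')''+\frac{n-1}{r}(\xi')'-\frac{n-1}{r^2}\xi'=f'(\xi)\xi' .
\]
Since $\xi'$ is a radial function, $\Delta\xi'=(\xi')''+\frac{n-1}{r}(\xi')'$, so
\[
-\Delta\xi'+f'(\xi)\xi'=-\frac{n-1}{|x|^2}\xi' .
\]
Testing this against $\psi$ and integrating by parts gives \eqref{eq svxi'}. This is legitimate because $\xi'\in W^{1,2}$ by \eqref{eq tail zeta'}, and $\xi'/|x|^2\in L^2_{\rm loc}$ for $n\geq 3$ by Hardy's inequality, since $\xi'(r)=O(r)$ near $0$.

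The sign claims then follow from $\xi'<0$ (Proposition \ref{prop radialuniqueness}) and $\xi>0$. Taking $\psi=\xi'$ gives $Q(\xi',\xi')=-(n-1)\int|\xi'|^2/|x|^2<0$. Taking $\psi=\xi$ gives $Q(\xi',\xi)=-(n-1)\int\xi'\xi/|x|^2>0$.

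\emph{Part (2).} Let $L=-\Delta+f'(\xi)$. Since $f'(\xi)\to f'(0)=a_0>0$ at infinity, the essential spectrum of $L$ is $[a_0,\infty)$, and below it there are only finitely many eigenvalues.
\begin{itemize}
\item \emph{At least one negative direction.} This follows from $Q(\xi',\xi')<0$. Alternatively, $Q(\xi,\xi)=\int f'(\xi)\xi^2-f(\xi)\xi<0$, because $f(t)/t$ is strictly decreasing for the $f$ in \eqref{eq tildef}.
\item \emph{At most one negative direction.} Use the Berestycki--Lions characterization in Proposition \ref{thm BL}: $\xi$ minimizes the Dirichlet energy under the single constraint $\int F=-\delta$. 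The second variation of this constrained problem is nonnegative on the codimension-one tangent space $\{\varphi:\int f(\xi)\varphi=0\}$. Lagrange multiplier scaling identifies this second variation with a positive multiple of $Q$, up to dilation. Hence $Q\geq 0$ on a codimension-one subspace, so its Morse index is at most $1$.
\item \emph{Kernel.} Decompose $\varphi$ into spherical harmonics. Each mode of degree $k$ satisfies an ODE with effective potential $f'(\xi)+\frac{k(k+n-2)}{r^2}$.
\begin{itemize}
\item For $k=1$, the function $\xi'$ solves the mode-$1$ ODE and is positive-signed. By Sturm comparison (ground state of that ODE), the mode-$1$ kernel is exactly $\{\partial_i\xi\}$.
\item For $k\geq 2$, the effective potential is strictly larger than in the $k=1$ case, so comparison with the positive solution $-\xi'$ gives a trivial kernel.
\item For $k=0$, a radial kernel element is ruled out by the non-degeneracy of radial ground states for $f$ of the form \eqref{eq tildef}. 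This is the uniqueness mechanism of \cite{chen1991uniqueness}, obtained from the shooting argument: the derivative of the shooting solution in the initial value does not decay.
\end{itemize}
\end{itemize}
\textbf{This radial step is the main obstacle.} There I would invoke the literature (Kwong/McLeod-type non-degeneracy) rather than reprove it.

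\emph{Part (3).} Suppose $\varphi$ satisfies \eqref{eq orthoJ}. I would argue by contradiction: if \eqref{eq nondegJ} fails, there are $\varphi_k$ normalized in $W^{1,2}$ satisfying \eqref{eq orthoJ} with $Q(\varphi_k,\varphi_k)\to 0$ (or $\leq 0$ in the limit). The argument has three steps.
\begin{enumerate}
\item \emph{Reduction to an $L^2$ bound.} Because the potential tends to $a_0>0$, we have $Q(\varphi,\varphi)\geq c\|\varphi\|_{W^{1,2}}^2-C\int_{B_R}\varphi^2$. So it suffices to rule out the case where the local $L^2$ mass vanishes.
\item \emph{Compactness.} Pass to a weak limit $\varphi_k\rightharpoonup\varphi$. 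The local term converges strongly, so $\varphi\neq 0$. The orthogonality conditions are preserved, and $Q(\varphi,\varphi)\leq 0$ by lower semicontinuity.
\item \emph{Contradiction.} Decompose $\varphi=\alpha e_1+\kappa+\varphi^+$, where $e_1$ is the negative eigenfunction, $\kappa\in\ker Q$, and $\varphi^+$ lies in the positive spectral subspace. The conditions $\int\varphi\,\partial_j\xi=0$ kill $\kappa$. Then $Q(\varphi,\xi')=0$ means that $\varphi$ is $Q$-orthogonal to a direction on which $Q<0$. Since $Q$ has exactly one negative direction, $Q$ restricted to the $Q$-orthogonal complement of $\xi'$ is positive semidefinite, with null directions only in $\ker Q$. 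Combined with $Q(\varphi,\varphi)\leq 0$, this forces $\varphi\in\ker Q$, hence $\varphi=0$, a contradiction.
\end{enumerate}
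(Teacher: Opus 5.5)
Your proposal is correct and follows the paper's proof. Part (1) uses the same differentiated radial ODE tested against $\psi$. Part (2) uses the same Morse-index argument: nonnegativity of $Q$ on the hyperplane $\int f(\xi)\varphi=0$ from the Berestycki--Lions constrained minimization, together with $Q(\xi',\xi')<0$. In both your proof and the paper's, the kernel claim ultimately rests on the literature: you reduce to radial non-degeneracy via spherical harmonics and Sturm comparison with $\xi'$, while the paper cites Feireisl--Petzeltov\'a directly. Your compactness-by-contradiction argument for part (3), using that $Q$ is positive semidefinite on the $Q$-orthogonal complement of $\xi'$ with null vectors only in $\ker Q$, is a correct expansion of what the paper calls ``standard spectral theory.''
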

\begin{proof}
We rewrite first $\Delta \xi=f(\xi)$ in spherical coordinates so that $\xi''(r) + \frac{(n-1)}{r}\xi' -f(\xi)(r)=0$ for $r>0$. Differentiating the latter equation in $r$ yields
    \begin{equation}\label{eq xi'}
        \Delta \xi'-f(\xi)\xi'= \xi''' + \frac{(n-1)}{r}\xi'' -f'(\xi)\xi'= \frac{(n-1)}{r^2}\xi',
    \end{equation}
    multiplying on both sides by $\psi \in W^{1,2}(\R^n)$ and integrating by parts yields \eqref{eq svxi'}. Notice that since $n\geq 3$, $\psi/|x| \in L^2(\R^n)$ in virtue of Hardy's inequality. So, by plugging $\xi$ and $\xi'$ into \eqref{eq svxi'} and by recalling that $\xi'\leq 0$ and $\xi>0$, we deduce that $Q(\xi', \xi') <0$ and $Q(\xi', \xi)>0$. This shows part (1).\\

   Regarding part (2), in virtue of \cite[Proposition 4.1]{feireisl1997threshold} have that the kernel of $Q$ is generated by $\partial_i \xi$ -see also \cite{ni1993locating}. On the other hand, in light of Proposition \ref{thm BL} $\xi(x)= \zeta(\lambda^\frac{-1}{2} x)$ where $\lambda>0$ and $\zeta$ solving the problem \eqref{eqBL}. Furthermore, from the minimality of $\zeta$ we have that
   \begin{equation}\label{eq zeta}
       \Delta \zeta = \lambda f(\zeta),
   \end{equation}
   and
   \begin{equation}\label{eq coerQ}
       \tilde{Q}(\psi, \psi) = \int_{\R^n} |\nabla \psi|^2 + \lambda f'(\zeta)\psi^2 dx \geq 0,
   \end{equation}
   for $\psi \in W^{1,2}(\R^n)$ satisfying $\int_{\R^n} f(\zeta) \psi =0$. Set $\varphi(x) = \psi(\sqrt{\lambda} x)$, by a simple change of variables \eqref{eq coerQ} translates into 
      \begin{equation}\label{eq coerQ}
       Q(\varphi, \varphi) \geq 0\qquad \mbox{if } \quad \int_{\R^n} f(\xi)\varphi=0.
   \end{equation}
   This observation combined with $Q(\xi', \xi')<0$ implies that $Q$ has a one dimensional negative eigen-space. \\

   Lastly, part (3) is a direct consequence of part (2) combined with standard spectral theory.
\end{proof}

\begin{remark}\label{remark dimres}
    The only place where the dimensional constraint plays a role ($n\geq 3$) is in the variational characterization of $\xi$, i.e., \eqref{eqBL}. This characterization does not hold in dimension two since in this dimension $\int_{\R^n} F(\xi)=0$ because of the Pohozaev inequality, see \cite{berestycki1980existence}. There are other cases where the ground state solution can be realized as a minimizer of a volume constrained problem in any dimension, e.g. \cite{bonforte2024asymptotic}, in which case the same argument presented so far follows mutatis mutandis.
\end{remark}

We now restate (an strengthened version of) \eqref{eq bubbling} under the hypotheses of Theorem \ref{thm1}. As mentioned in the introduction, results of this type are rather general and the one presented below is a corollary from the results in \cite{lions1988positive}, see also \cite[Lemma 3.3]{feireisl1997threshold}.

\begin{proposition}\label{eq Jbubbling}
Let $n\geq 3$, and let $f$ and $u$ as in Theorem \ref{thm1}. Furthermore, assume that
\begin{equation}\label{eq limbubbles}
   M =  \lim_{t \to \infty} \frac{J(u(t))}{J(\xi)} >0.
\end{equation}
Then, given any sequence $\{T_k\}_{k\in \N}$ with $\lim_{k \to \infty} T_k =\infty$, there exist a subsequence $\{t_k\}_{k\in \N}$ and centers $x_k^i \in \R^n$ for $k\in \N$ and $i \in \{1,\cdots, M\}$ such that 
    \begin{eqnarray}\notag
   &&\lim_{k\to \infty} \Big\Vert u_k -\tau_{x^0}\xi -\sum_{i=1}^M \tau_{x_k^i}[\xi] \Big \Vert_{W^{2,2}\cap C^2(\R^n)} =0,\\ \label{eq bubbling1}
   &&\lim_{k\to \infty} |x^i_k -x^j_k| \to \infty \qquad \mbox{for $i\neq j$, $i,j =1, \cdots, M,$}
\end{eqnarray}
where $u_k(x):= u(t_k, x)$.
\end{proposition}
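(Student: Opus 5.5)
The plan is to combine the energy identity \eqref{eq derentropy}, uniform parabolic regularity, and the concentration--compactness decomposition of Palais--Smale sequences from \cite{lions1988positive}. The uniqueness of ground states (Proposition \ref{prop radialuniqueness} together with \cite[Example 2]{chen1991uniqueness}) then identifies every profile with a translate of $\xi$.

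First we establish uniform bounds along the flow. Since $p_l<\frac{n}{n-2}$ and the nonlinearity is absorbing for large $u$ (the terms $-a_l t^{p_l}$ enter $f$ with a negative sign, so $F$ is bounded below by $-C t^2$ only through the linear part), we show that $\sup_{t>1}\|u(t)\|_{W^{1,2}}<\infty$. The input here is that $J(u(t))$ is nonincreasing and $J\geq -C\|u\|_{L^2}^2$, combined with the fact that $M>0$ forces global existence. Alternatively, one argues as in \cite[Lemma 3.3]{feireisl1997threshold}: a solution with $J(u(t))\to MJ(\xi)$ finite is global and bounded. Integrating \eqref{eq derentropy} gives $\int_1^\infty\|\partial_t u\|_{L^2}^2\,dt<\infty$. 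Hence, for any $T_k\to\infty$, we can pick $t_k\in[T_k,T_k+1]$ with $\|\partial_t u(t_k)\|_{L^2}\to0$. Standard parabolic Schauder and $L^p$ estimates, using the subcritical growth and a bootstrap from $W^{1,2}$ via Moser iteration, give uniform $C^{2,\beta}$ and $W^{2,2}$ bounds on $u(t)$ for $t\ge1$.

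Next we extract the profiles. Set $h_k=\partial_t u(t_k)\to0$ in $L^2$, so that $-\Delta u_k+f(u_k)=-h_k$. We apply the profile decomposition iteratively. Choose $x_k^1$ maximizing $u_k$. If $\sup u_k\not\to0$, then $\tau_{-x_k^1}u_k$ converges in $C^2_{\rm loc}$ to a nonnegative nontrivial solution of \eqref{eq gs}. By Proposition \ref{prop radialuniqueness} and uniqueness, this limit is a translate of $\xi$, and we absorb the translation into $x_k^1$. We then subtract it and repeat on the remainder. Each extracted bubble carries energy $J(\xi)>0$ by Proposition \ref{thm BL}, and the energies split asymptotically because the centers separate. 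The Brezis--Lieb type splitting uses the exponential decay \eqref{eq tail zeta}, so cross terms vanish as $|x_k^i-x_k^j|\to\infty$. The process therefore stops after finitely many steps. Since $J(u_k)\to MJ(\xi)$, the number of bubbles equals $M$, and in particular $M$ is an integer.

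Finally, we show the remainder $w_k=u_k-\sum_i\tau_{x_k^i}\xi$ tends to zero in norm, and this is the main obstacle. We know $w_k\to0$ locally uniformly around every center and $\sup|w_k|\to0$ by the extraction stopping rule. Since $f'(0)>0$, the linearized operator $-\Delta+f'(0)$ is coercive on small functions. Testing the equation for $w_k$ against $w_k$ gives $\|w_k\|_{W^{1,2}}^2\lesssim \|h_k\|_{L^2}\|w_k\|_{L^2}+o(1)$. Here the $o(1)$ collects the interaction terms $f(\sum\tau\xi)-\sum f(\tau\xi)$, which vanish by separation and decay, together with the small-amplitude error $|f(w_k+\cdot)-\ldots|\le o(1)|w_k|$, using $f\in C^1$ and the uniform smallness of $w_k$ away from centers. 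This yields $w_k\to0$ in $W^{1,2}$. Elliptic regularity for the difference equation, with right-hand side tending to zero in $L^2$ and in $C^\beta$, upgrades this to $W^{2,2}\cap C^2$. The statement's extra term $\tau_{x^0}\xi$ is accounted for by labelling one of the bubbles as $x^0$; equivalently, it is a bubble whose center stays bounded after passing to a subsequence.
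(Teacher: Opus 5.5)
Your outline is the standard concentration--compactness argument: uniform bounds, then extraction of profiles identified with translates of $\xi$ via Proposition \ref{prop radialuniqueness} and uniqueness, then killing the remainder using $f'(0)>0$ away from the centers. The paper does not write this out; it imports the result from \cite{lions1988positive} and \cite[Lemma 3.3]{feireisl1997threshold}. Your extraction and remainder steps are fine in outline. Your relabelling of $\tau_{x^0}\xi$ is also the right reading, since as printed the statement has $M+1$ profiles, which is incompatible with $J(u_k)\to MJ(\xi)$. Two steps, however, do not work as written.

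\textbf{The a priori bound has a sign error.} With $f(t)=a_0t-\sum_l a_lt^{p_l}$ the flow is $\partial_tu=\Delta u-a_0u+\sum_l a_lu^{p_l}$. This is a \emph{source} nonlinearity, not an absorbing one, and
\[
F(t)=\tfrac{a_0}{2}t^2-\sum_l\tfrac{a_l}{p_l+1}t^{p_l+1}\to-\infty .
\]
So $J\ge -C\|u\|_{L^2}^2$ is false (take $v=\lambda\varphi$ with $\lambda\to\infty$), and solutions can blow up in finite time (Remark \ref{reramrk convergence}). Indeed, if $F$ were bounded below as you claim, testing $\Delta\xi=f(\xi)$ against $\xi$ would force $\xi\equiv 0$, so there would be no ground state at all. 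Global existence is a hypothesis here. What you need is a uniform-in-time bound for a global solution of a focusing subcritical problem, and that is a genuine theorem. Typically one shows $J(u(t))\ge 0$ for global solutions by a concavity argument based on
\[
\tfrac12\tfrac{d}{dt}\|u\|_{L^2}^2=-2J(u)+\sum_l a_l\tfrac{p_l-1}{p_l+1}\int u^{p_l+1},
\]
and then needs further work to get pointwise-in-time $W^{1,2}$ and $L^\infty$ bounds. Only your fallback citation of \cite[Lemma 3.3]{feireisl1997threshold} actually closes this step.

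A related point: you count bubbles by energy splitting. That count only works because the uniform $W^{1,2}$ bound makes $J$ of the remainder bounded below, not because of any sign of $F$. Counting with $\|u_k\|_{L^2}^2\ge\sum_{i\le N}\int_{B_R(x_k^i)}u_k^2$ is cleaner.

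\textbf{You do not produce a subsequence of $\{T_k\}$.} The statement asks for a \emph{subsequence} of the given times, but you pick $t_k\in[T_k,T_k+1]$ where $\|\partial_tu(t_k)\|_{L^2}$ happens to be small. This is not cosmetic. In Lemma \ref{lemma exbm} the proposition is applied along an arbitrary sequence of ``bad'' times, and the contradiction needs the decomposition at those very times.

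You can repair this by showing $\|\partial_tu(t)\|_{L^2}\to 0$ along the whole flow:
\begin{itemize}
\item $v=\partial_tu$ solves $\partial_tv=\Delta v-f'(u)v$.
\item The uniform $L^\infty$ bound gives $f'(u)\ge -K$ for $t\ge1$, hence $\frac{d}{dt}\|v\|_{L^2}^2\le 2K\|v\|_{L^2}^2$.
\item Averaging over $s\in[t-1,t]$ gives $\|v(t)\|_{L^2}^2\le e^{2K}\int_{t-1}^{t}\|v(s)\|_{L^2}^2\,ds$.
\item By \eqref{eq derentropy} this integral equals $J(u(t-1))-J(u(t))$, which tends to $0$ because $J(u(t))$ has a finite limit.
\end{itemize}
With this in hand, your extraction argument applies directly at $t=T_k$.
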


We close this section stating a generalized version of Theorem \ref{thm1}.

\begin{theorem}\label{thm gen}
    Let $\beta \in (0,1)$, $n\geq 3$, $u_0 \in W^{1,2}(\R^n)$ with $u_0 \geq 0$ almost everywhere in $\R^n$, and let $u$ be a solution to \eqref{eq generalflow}. Let $f \in C^{1,\beta}([0, \infty)$ with $f(0)=0$, $f'(0)>0$, such that \eqref{eq gsap0} has a unique solution (up to translations), and that the conclusions of Proposition \ref{thm nondegneracy} and Proposition \ref{eq Jbubbling} hold. If, furthermore, $f$ satisfies \eqref{eq KPP}  and \eqref{eq concat0}, then, $u$ is the unique solution to \eqref{eq generalflow}, $u(t,x)>0$ for $t>0$, and 
\begin{equation}\label{eq nonzerogen}
   J(u(t, \cdot))-J(\xi) \leq C e^{-\frac{t}{C}}, \qquad \Vert u(t, \cdot)-\tau_{x^*}[\xi]\Vert_{C^2(\R^n)\cap W^{2,2}(\R^n)} \leq C e^{-\frac{t}{C}},
\end{equation}
where $\xi$ is the unique ground state solution of $\Delta \xi=f(\xi)$ and $x^* \in \R^n$.
\end{theorem}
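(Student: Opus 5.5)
Under the hypotheses of Theorem \ref{thm gen}, well-posedness, uniqueness and positivity follow from standard semilinear parabolic theory: $f$ is locally Lipschitz and $u_0\ge0$, so the strong maximum principle gives $u>0$. The energy identity \eqref{eq derentropy} shows that $J(u(t))$ is non-increasing. We assume $J(u(t))$ stays bounded below, i.e.\ the solution is global and does not vanish, so that we are in the regime \eqref{eq limbubbles} with $M\ge1$ an integer given by Proposition \ref{eq Jbubbling}. Integrating \eqref{eq derentropy} gives $\int_0^\infty\|\partial_t u\|_{L^2}^2<\infty$. The goal is the differential inequality
\begin{equation*}
J(u(t))-MJ(\xi)\le C\|\partial_t u(t)\|_{L^2}^2\qquad (t\ge T_0),
\end{equation*}
together with a proof that $M=1$. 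Set $E(t):=J(u(t))-MJ(\xi)\ge0$. Since $E'=-\|\partial_t u\|^2\le -E/C$, we get $E(t)\le Ce^{-t/C}$. A dyadic Cauchy–Schwarz argument then gives
\begin{equation*}
\int_t^\infty\|\partial_t u\|_{L^2}\le \sum_k \Big(2^k\!\!\int_{t+2^k}^{t+2^{k+1}}\|\partial_t u\|^2\Big)^{1/2}\le Ce^{-t/C},
\end{equation*}
so $u(t)$ is Cauchy in $L^2$. Combined with the bubbling description, this forces a single profile $\tau_{x^*}[\xi]$. Parabolic regularity, interpolating the $L^2$ decay against uniform $C^{2,\beta}$ bounds, upgrades the convergence to $W^{2,2}\cap C^2$.

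\textbf{Step 1: the linear stability estimate.} I would argue by contradiction along sequences $t_k\to\infty$ and apply Proposition \ref{eq Jbubbling}, so that $u_k$ is close to $\sigma_k:=\sum_i\alpha_i\tau_{x_i}[\xi]$. The parameters $\alpha_i,x_i$ are chosen by minimizing $\|u_k-\sigma\|_{W^{1,2}}$, which yields orthogonality of $\rho_k:=u_k-\sigma_k$ to $\partial_j\tau_{x_i}\xi$ and to $\tau_{x_i}\xi$. Writing $\partial_t u=-\Delta u+f(u)$ and testing the equation with $\rho_k$, the nondegeneracy \eqref{eq nondegJ} applies bubble by bubble, after a localization with cut-offs around each $x_i$. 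The direction $\xi'$ (the negative direction) is handled by the constraint structure. This gives
\begin{equation*}
\|\rho_k\|_{W^{1,2}}\le C\Big(\|\partial_t u\|_{L^2}+\sum_i|1-\alpha_i|+\sum_{i<j}\int\tau_{x_i}\xi\,\tau_{x_j}\xi\Big).
\end{equation*}
Testing the equation against $\tau_{x_i}\xi$ and $\tau_{x_i}\xi'$ then bounds $|1-\alpha_i|$ by $\|\partial_t u\|$, the interaction terms, and $\|\rho_k\|^2$.

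\textbf{Step 2 (main obstacle): interactions and $M=1$.} Here I would use \eqref{eq KPP} and \eqref{eq concat0}. Expanding $J(\sigma_k)$ gives
\begin{equation*}
J(\sigma_k)=MJ(\xi)-c\sum_{i<j}\int f(\tau_{x_i}\xi)\,\tau_{x_j}\xi+\dots ,
\end{equation*}
with a definite sign coming from \eqref{eq concat0} and \eqref{eq KPP}, since $F(a+b)-F(a)-F(b)-f(a)b$ has a sign for small arguments. The tail asymptotics \eqref{eq tail zeta}–\eqref{eq tail zeta'} compare the interaction with $\int\tau_{x_i}\xi\,\tau_{x_j}\xi$. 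Projecting the equation on $\tau_{x_i}\xi'$ in the direction $x_i-x_j$ shows that the interaction is bounded by $C\|\partial_t u\|$ plus higher-order terms. When $M\ge2$, this is incompatible with monotone decrease of $J$ to $MJ(\xi)$: the interaction has a sign that makes $J(\sigma_k)<MJ(\xi)$, while $J(u_k)\ge MJ(\xi)$ by monotonicity. Combining with Step 1 yields $|E|\le C\|\partial_t u\|^2$. This rules out $M\ge2$, giving $M=1$. I expect this sign and size bookkeeping of tail interactions to be the hardest part.
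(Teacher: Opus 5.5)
Your global architecture matches the paper's. You prove $J(u)-MJ(\xi)\le C\|\pa_t u\|_{L^2}^2$ for the given $M$, integrate it, deduce that $u(t)$ is Cauchy in $L^2$, and conclude that there is a single profile. Step 1, however, has a genuine gap: your orthogonality conditions do not control the negative direction of $Q=D^2J(\xi)$.

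\textbf{Why the coercivity fails.} Minimizing $\|u-\sigma\|_{W^{1,2}}$ over weighted bubbles gives $\langle\rho,\tau_{x_i}[\xi]\rangle_{W^{1,2}}=0$. This is not the condition $Q(\rho,\xi')=0$ of \eqref{eq orthoJ}, under which \eqref{eq nondegJ} holds. Nor is any ``constraint structure'' available. The flow \eqref{eq generalflow} preserves no constraint, and the Berestycki--Lions characterization only gives $Q\ge0$ on $\{\int f(\xi)\varphi=0\}$, which $\rho$ does not satisfy.

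In fact coercivity on $\{\xi,\pa_j\xi\}^{\perp_{W^{1,2}}}$ can fail. Let $w$ solve $(-\Delta+f'(\xi))w=(-\Delta+1)\xi$. If $\langle w,\xi\rangle_{W^{1,2}}\neq 0$, every $\varphi\perp_{W^{1,2}}\pa_j\xi$ splits as $v+tw$ with $v\perp_{W^{1,2}}\{\xi,\pa_j\xi\}$, and $Q(\varphi,\varphi)=Q(v,v)+t^2\langle w,\xi\rangle_{W^{1,2}}$. Apply this to $\varphi=\xi'$: since $Q(\xi',\xi')<0$, if $\langle w,\xi\rangle_{W^{1,2}}>0$ then $Q(v,v)<0$ for some admissible $v$. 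For $f(t)=a_0t-t^p$ one finds $w=-\frac{\xi}{(p-1)a_0}+\frac{a_0-1}{2a_0}\,x\cdot\nabla\xi$. By Pohozaev, $a_0\langle w,\xi\rangle_{W^{1,2}}=-\frac{1}{p-1}\|\xi\|_{W^{1,2}}^2+\frac{1-a_0}{4}\big(n\|\xi\|_{L^2}^2+(n-2)\|\nabla\xi\|_{L^2}^2\big)$. This is positive for $n=3$, $p\in(7/3,3)$ and $a_0$ small, since then $\|\nabla\xi\|_{L^2}\ll\|\xi\|_{L^2}$. These $f$ satisfy all the hypotheses.

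\textbf{Why the bound on the weights is circular.} Testing the equation against $\tau_{x_i}[\xi']$ (or against $\tau_{x_i}[\xi]$) produces $(\alpha_i-1)Q(\xi,\xi')+Q(\rho,\tau_{x_i}[\xi'])$ plus small terms. The term $Q(\rho,\tau_{x_i}[\xi'])$ is linear in $\rho$ with a non-small constant, not $O(\|\rho\|^2)$. Meanwhile Step 1 bounds $\|\rho\|$ by $\sum_i|1-\alpha_i|$, so the two estimates feed into each other.

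\textbf{The missing idea.} The paper chooses the parameters differently. The centers come from the $L^2$ best fit by \emph{unweighted} bubbles \eqref{eq bestmatching}. With $I=\sum_{i<j}\int\tau_{x^i}[\xi]\tau_{x^j}[\xi]$, this gives $\int\rho\,\pa_j\tau_{x^i}[\xi]=O(\sum_l|1-\alpha_l|+I)$. The weights are then defined through \eqref{eq definitionalpha}, i.e. $D^2J(\tau_{x^i}[\xi])(\tau_{x^i}[\xi'],\rho)=0$, which is uniquely solvable because $Q(\xi',\xi)>0$. This is exactly \eqref{eq orthoJ}, so Lemma \ref{lemma multibybble} gives coercivity. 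Testing against $\psi\,\tau_{x^i}[\xi']$ now leaves only cut-off tails of $\rho$, which yields $|1-\alpha_i|\le\delta\|\rho\|_{H^1}+C\|\pa_tu\|_{L^2}+CI$ with $\delta$ small.

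\textbf{Two further points in Step 2.} First, the sign of the interaction energy does not by itself exclude $M\ge2$. The chain $J(\sigma_k)<MJ(\xi)\le J(u_k)$ is perfectly consistent, because $u_k\neq\sigma_k$. In the paper, \eqref{eq concat0} only serves to discard that energy in the upper bound for $J(u)-MJ(\xi)$. The conclusion $M=1$ comes solely from the $L^2$-Cauchy argument you already have, so the inequality must be proved for the unknown $M$.

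Second, projecting along $x_i-x_j$ does not give a quantity comparable to $I$ unless the leading term has a sign. Write $\xi_1=\tau_y[\xi]$, let $\omega_1$ be the sum of the other bubbles and $\psi_1$ a cut-off around $y$. After integrating by parts, the leading term is $\int\xi_1\,\pa_e\omega_1\,(f'(0)-f(\xi_1)/\xi_1)\psi_1$, and its sign comes from \eqref{eq KPP}. For $M\ge3$ you also need an extremal center $y$ and a direction $e$ placing all other centers in a uniform cone (Lemma \ref{lemma ch}), so that $-\pa_e\omega_1\gtrsim\omega_1$ near $y$, followed by an induction over extremal centers. Finally, the $\rho$-contribution there is $o(1)\|\rho\|_{H^1}$, not higher order, so it has to be absorbed through the coercivity above.
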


\section{Proof of the main results}\label{sec proofmain}

\subsection{Setup.}\label{subsec setup} Let $f$ and $u$ as in Theorem \ref{thm gen} , and let $M\in \N$ as in \eqref{eq limbubbles}. We define the set of $M$-bubbles as 
$$\mathbb{B}_{M} = \Big\{\sum_{i=1}^M  \alpha_i\tau_{x^i}\big[\xi\big]\, \Big|\,  x^i \in \R^n, \alpha_i \in \R\Big\},$$
where $\xi$ is the unique radial ground-state solution of $\Delta \xi = f(\xi)$, $\alpha_i \in \R$, and $x^i \in \R^n$. We also consider the space of simple or unweighted $M$-bubbles 
$$\mathbb{SB}_{M} = \Big\{\sum_{i=1}^M  \tau_{x^i}\big[\xi\big]\, \Big|\,  x^i \in \R^n\Big\}.$$
Given $\nu >0$, we say that a $M$-bubble $\eta(t)=\sum_{i=1}^M  \alpha_i \tau_{x^i}\big[\xi\big]$ is \textit{$\nu$-interacting} if
	\begin{equation}\label{eq nuweak interation}
		\sum_{1\leq i< j \leq M}\int_{\R^n} \tau_{x^i}\big[\xi\big] \tau_{x^j}\big[\xi\big] \leq \nu.
	\end{equation}

Given $t>0$, we consider the minimization problem
	\begin{equation}\label{eq bestmatching}
		\Gamma( t)=	\inf \Big\{  \Vert u(t,\cdot)- \theta \Vert_{L^2(\R^n)} \, |\, \theta \in \mathbb{SB}_M \Big\}.
	\end{equation}
The existence of a minimizer $\theta(t) = \sum_{i=1}^M \tau_{x^i(t)}\big[\xi\big]$ is guaranteed by Lemma \ref{lemma exbm} provided that $t\geq T$ for $T$ sufficiently large. Associated to $\theta$, we define a best matching (weighted) $M$-bubble as
\begin{equation*}
    \eta(t) = \sum_{i=1}^M \alpha_i(t) \tau_{x^i(t)}\big[\xi\big],
\end{equation*}
where the $\alpha_i$ are given by solutions to the linear system
\begin{equation}\label{eq definitionalpha}    D^2J(\tau_{x^i(t)}\big[\xi'\big])\Bigg(\tau_{x^i(t)}\big[\xi\big], u-\sum_{j=1}^M \alpha_j(t) \tau_{x^j(t)}\big[\xi\big]\Bigg)=0, \quad \mbox{for $i=1,\cdots, M$}.
\end{equation}
The existence and uniqueness of $\alpha_1(t), \cdots, \alpha_M(t)$ for $t\geq T$ is also discussed in Lemma \ref{lemma exbm}. In virtue of Proposition \ref{eq Jbubbling}, we have $\lim_{t\to \infty} \Gamma(t)=0$. Associated with $\eta(t)$  we have centers $(x^1(t),\cdots, x^M(t))$ that drift apart as $t\to \infty$, which, by the properties of $\xi$ (see Lemma \ref{lemma pairwise inter}) imply that $\eta(t)$ is $\nu(t)$-interacting with $\lim_{t \to \infty} \nu(t)=0$. In particular, we have that $\tau_{x^j(t)}\big[\xi\big]$ and $\tau_{x^i(t)}\big[\xi'\big]$ have supports asymptotically disjoint as $t\to \infty$ (if $j\neq i$) which, heuristically, implies that \eqref{eq definitionalpha}  yields
\begin{equation}\label{eq limitalpha}
    \lim_{t\to \infty} \alpha_i(t) = \lim_{t\to \infty} \frac{D^2J(\tau_{x^i(t)}\big[\xi\big])(\tau_{x^i(t)}\big[\xi'\big],u)}{D^2J(\tau_{x^i(t)}\big[\xi\big])(\tau_{x^i(t)}\big[\xi'\big],\tau_{x^i(t)}\big[\xi\big])}=1,
\end{equation}
where we critically use that $D^2J(\xi)(\xi', \xi)>0$, see Proposition \ref{thm nondegneracy}. The convergence \eqref{eq limitalpha} is rigorously justified in  Lemma \ref{lemma exbm}. It follows from the previous considerations that  given $\delta>0$, we can always take $T=T(\delta)>0$ such that
\begin{equation}\label{eq smallGammanu}
    \Gamma(t) + \nu(t)+\sum_{i=1}^M |1-\alpha_i(t)| \leq \delta, \quad t\geq T.
\end{equation}

Given a best-matching $M$-bubble $\eta(t)$, we define $\rho(t) = u(t, \cdot)- \eta(t)$. Notice that since $\rho(t) \to 0$ in $L^2(\R^n)$, parabolic regularity implies that $\rho(t) \to 0$ in $C^1(\R^n)$ so that we can assume that $T$ is large enough for $\Vert \rho(t)\Vert_{C^1(\R^n)} \leq \delta$. We will usually drop the dependence on $t$ in $\rho(t), \eta(t), \theta(t)$ and in the centers $x^i(t)$ and $\alpha^i(t)$ when is understood that they correspond to a best-matching $M$-bubble.\\

The minimality of $\theta$ in \eqref{eq bestmatching} implies the following set of orthogonality conditions
	\begin{equation}\label{eqn orthogonality 0}
		\int_{\R^n} (u-\theta) \partial_j \tau_{x^i(t)}[\xi]=0,
	\end{equation}
 for $i=1,\cdots M$ and $j=1\cdots, n$. In turn, this implies the set of almost-orthogonality conditions for $\rho$
 \begin{equation}\label{eqn orthogonality 0}
		\Bigg|\int_{\R^n} \rho \partial_j \tau_{x^i}[\xi]\Bigg|\leq C\sum_{l=1}^M|1-\alpha_i| +C\sum_{1\leq l< m \leq M}\int_{\R^n} \tau_{x^l}\big[\xi\big] \tau_{x^m}\big[\xi\big].
	\end{equation}
 
 On the other hand, \eqref{eq definitionalpha} can be rewritten in terms of $\rho$ as
 \begin{equation}\label{eq orthpxi} 
 D^2J(\tau_{x^i(t)}\big[\xi\big]) (\tau_{x^i}\big[\xi'\big],\rho)=0, \quad \mbox{for $i=1,\cdots, M$}.
\end{equation}
This last orthogonality condition is required to guarantee the coercivity of the second variation of $J$ in the direction $\rho(t)$ (i.e., Proposition \ref{thm nondegneracy}). It will also play a central role in bounding $\sum_{l=1}^M|1-\alpha_i|$ so that \eqref{eqn orthogonality 0} becomes a suitably quantitative almost-orthogonality condition --see Lemma \ref{cor stabilitydich}.

\subsubsection{The $M$-bubble energy inequality and the role of assumption \eqref{eq concat0}}

Our first step is to derive an estimate for the energy deficit $J(u(\cdot, t)) -M J(\xi)$, which will allows us to link $\partial_t u$ to the distance between $u$ and a best-matching $M$-bubble $\eta(t)=\sum_{i=1}^M \alpha_i(t)\tau_{x^i(t)}\big[\xi\big]$. We point out that the derivation of this estimate (i.e., Lemma \ref{lemma difMbubble}) is the only place where hypothesis \eqref{eq concat0} is used.\\

As a preparatory ingredient, we recall that since $t\to J(u( t, \cdot))$ is non-increasing, we have
 \begin{equation}\label{eq bee}
     J(u(\cdot, t)) \geq M J(\xi) = \sum_{i=1}^M J\big(\tau_{x^i(t)}\big[\xi\big]\big),
 \end{equation}   
 where the last equality follows from the translation invariance of $J$.

\begin{lemma}\label{lemma difMbubble}
    Let $u$ be as in Theorem \ref{thm1} and assume \eqref{eq limbubbles}. Let $\eta(t) = \sum_{i=1}^M \alpha_i(t) \tau_{x^i(t)}\big[\xi\big]$ be a best-matching $M$-bubble. There exists $T>0$ large such that for $t\geq T$
    \begin{eqnarray}\notag
J(u)- MJ(\xi)+\int_{\R^n}  2|\nabla \rho(t)|^2 + f'(\eta(t)) \rho(t)^2&\leq& C\Vert \partial_t u\Vert_{L^2(\R^n)} \Vert \rho(t)\Vert_{L^2(\R^n)}\\\notag
&&+C\sum_{i=1}^M |1-\alpha_i|^2\\\notag
&&+C\sum_{1\leq i <j\leq M} \Big(\int_{\R^n}\tau_{x^i(t)}\big[\xi\big]\tau_{x^j(t)}\big[\xi\big]\Big)^2\\\label{eq keyestimate}
	&&+\int_{\R^n}|\rho(t)|^{2+\beta},
\end{eqnarray}
\end{lemma}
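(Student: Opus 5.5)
Write $u=\eta+\rho$ and Taylor expand the energy about the weighted bubble $\eta$:
\[
J(u)=J(\eta)+DJ(\eta)[\rho]+\tfrac12 D^2J(\eta)(\rho,\rho)+O\Big(\int_{\R^n}|\rho|^{2+\beta}\Big),
\]
where the remainder comes from $f\in C^{1,\beta}$ and $\|\rho\|_{C^1}\le\delta$. Next, test the equation $\partial_t u=\Delta u-f(u)$ against $\rho$. This gives
\[
-\int_{\R^n}\partial_t u\,\rho=\int_{\R^n}\nabla u\cdot\nabla\rho+f(u)\rho
= DJ(\eta)[\rho]+\int_{\R^n}|\nabla\rho|^2+f'(\eta)\rho^2+O\Big(\int_{\R^n}|\rho|^{2+\beta}\Big).
\]
Subtracting the Taylor expansion eliminates $DJ(\eta)[\rho]$ and leaves
\[
J(u)+\tfrac12\int_{\R^n}|\nabla\rho|^2+f'(\eta)\rho^2=J(\eta)-\int_{\R^n}\partial_t u\,\rho+O\Big(\int_{\R^n}|\rho|^{2+\beta}\Big).
\]
Cauchy--Schwarz bounds $\int\partial_t u\,\rho$ by $\|\partial_t u\|_{L^2}\|\rho\|_{L^2}$. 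Up to adjusting the multiplicative constant in front of the quadratic form (the stated factor $2$ can be arranged by combining the two identities with different weights), the lemma therefore reduces to the bound
\[
J(\eta)-MJ(\xi)\le C\sum_{i=1}^M|1-\alpha_i|^2+C\sum_{i<j}\Big(\int_{\R^n}\tau_{x^i}[\xi]\tau_{x^j}[\xi]\Big)^2 .
\]

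To prove this, write $\xi_i=\tau_{x^i}[\xi]$ and expand $J(\eta)$. The single-bubble part is $\sum_iJ(\alpha_i\xi_i)$. Since $\xi$ is critical for $J$, the map $\alpha\mapsto J(\alpha\xi)$ has zero derivative at $\alpha=1$, so each term equals $J(\xi)+O(|1-\alpha_i|^2)$.

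The interaction part is
\[
\sum_{i<j}\alpha_i\alpha_j\int_{\R^n}\nabla\xi_i\cdot\nabla\xi_j+\int_{\R^n}\Big(F\Big(\sum_i\alpha_i\xi_i\Big)-\sum_iF(\alpha_i\xi_i)\Big).
\]
Integrating by parts with $-\Delta\xi_j=-f(\xi_j)$ turns the gradient term into $\int f(\xi_j)\xi_i$. On each region where $\xi_j$ dominates, the $F$-difference is $f(\alpha_j\xi_j)\sum_{i\ne j}\alpha_i\xi_i$ plus a second-order remainder. After replacing $\alpha$'s by $1$ (error $O(|1-\alpha|^2+\nu^2)$ by Young's inequality), the first-order interaction terms combine to something of the form
\[
-\int_{\R^n}\Big(F(\xi_i+\xi_j)-F(\xi_i)-F(\xi_j)-f(\xi_j)\xi_i\Big)\quad\text{or}\quad -\int_{\R^n} f(\xi_j)\xi_i .
\]
The task is to show this is $\le 0$ up to quadratic errors.

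This sign is where \eqref{eq concat0} enters. On the region where both bubbles are small, i.e. below $\delta$, concavity of $f$ on $(0,\delta)$ gives superadditivity-type inequalities for $F$: with $f(0)=0$, we have $f(a+b)\le f(a)+f(b)$, so $F(a+b)-F(a)-F(b)\le$ the cross terms. These have the right sign to make the net interaction non-positive; this is the energetic unfavorability of splitting described in Remark \ref{remark hypf}. On the complementary region one bubble is of order one while the other is exponentially small. There the interaction contributes a term like $\int f(\xi_j)\xi_i$ with a sign that must be checked, and any contribution of the wrong sign must be shown to be $O\big((\int\xi_i\xi_j)^2\big)$ or absorbed. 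Proposition \ref{prop radialuniqueness} gives comparable exponential tails, which are needed to compare $\int f(\xi_j)\xi_i$ with $\int\xi_i\xi_j$.

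The main obstacle is this sign analysis. The first-order interaction is of size $\nu$, not $\nu^2$, so it cannot simply be bounded; one must prove it is non-positive. I would first try to show
\[
\int_{\R^n}\Big(F(\xi_i+\xi_j)-F(\xi_i)-F(\xi_j)\Big)-\int_{\R^n}\big(f(\xi_i)\xi_j+f(\xi_j)\xi_i\big)\Big/2\le 0
\]
pointwise using concavity, handling the region where $\max(\xi_i,\xi_j)>\delta$ via the exponential tail comparison.
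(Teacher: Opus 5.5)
Your reduction is sound and matches the paper. Expanding $J$ at $\eta$ and testing the flow against $\rho$ is the paper's Step~3, which linearizes $J(u-\rho)$ at $u$ instead. Writing $J(\eta)=\sum_iJ(\alpha_i\xi_i)+(\text{interaction})$ with $J(\alpha\xi)=J(\xi)+O(|1-\alpha|^2)$, $\xi_i=\tau_{x^i}[\xi]$, is a cleaner way to handle the weights than the paper's. One small correction: reweighting your two identities cannot change the factor in front of the quadratic form, because eliminating $DJ(\eta)[\rho]$ forces the combination. You always get $\tfrac12 D^2J(\eta)(\rho,\rho)$, as does the paper's Step~3, and only this quantity up to a constant is used later.

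\textbf{The gap.} You stop exactly where the content of the lemma lies: you never show that the part of the interaction which is linear in $\nu$ is nonpositive near the bubble centres. A single global integration by parts writes the gradient cross term as $-\int f(\xi_i)\xi_j$. The linear part of $F(\eta)-\sum_kF(\xi_k)$, however, is $f(\xi_i)\xi_j$ where $\xi_i$ dominates and $f(\xi_j)\xi_i$ where $\xi_j$ dominates. So on one of the two regions nothing cancels, and comparing $\int f(\xi_j)\xi_i$ with $\int\xi_i\xi_j$ through tail asymptotics only bounds the mismatch by $C\nu$.

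Your symmetrized pointwise inequality does not fix this with \eqref{eq concat0} alone. It does hold where $\xi_i+\xi_j<\delta$ (average the two one-sided forms of Lemma~\ref{lemma tailconcavity}). But where $\xi_i$ is of order one, the integrand equals $\tfrac12\xi_j\,(f(\xi_i)-f'(0)\xi_i)+O(\xi_j^{1+\beta})$. Its leading term is nonpositive only under \eqref{eq KPP}, and its remainder is $O(\nu^{1+\beta})$, not $O(\nu^2)$. With \eqref{eq KPP} the remainder can be absorbed by the leading term, which is strictly negative near $x^i$ because $f(\xi(0))\le 0$. That is a further argument you do not give, and it uses \eqref{eq KPP} where the paper needs only \eqref{eq concat0}.

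\textbf{The missing idea} is the paper's Step~1: integrate $\int\nabla\xi_i\cdot\nabla\xi_j$ by parts piecewise, using on each ball the equation of the bubble centred there. By Green's identity, the mismatch on $B_r(x^j)$ is a flux with a sign,
\[
\int_{B_r(x^j)}\big(f(\xi_i)\xi_j-f(\xi_j)\xi_i\big)=\xi(r)\int_{B_r(x^j)}f(\xi_i)-\xi'(r)\int_{\partial B_r(x^j)}\xi_i>0,
\]
since $\xi'<0$ and $f>0$ near $0$. Hence
\[
\int\nabla\xi_i\cdot\nabla\xi_j\le-\int_{B_r(x^i)}f(\xi_i)\xi_j-\int_{B_r(x^j)}f(\xi_j)\xi_i-\int_{C_r}f(\xi_i)\xi_j .
\]
In each ball the linear Taylor term of $F(\eta)$ about the dominant bubble then cancels exactly. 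On $C_r$, Lemma~\ref{lemma tailconcavity} gives a nonpositive contribution, and this is the only place \eqref{eq concat0} enters.

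\textbf{A caution on the remainder.} This applies to your ``second-order remainder on each dominance region'' and equally to the paper's write-up, which takes $r=\frac12\min|x^i-x^j|$ and absorbs $C\int_{B_r(x^i)}(\eta-\xi_i)^2$ into quadratic terms. The remainder $\int_{B_r(x^i)}\xi_j^2$ is comparable, up to polynomial factors in $r$, to $\int\xi_i\xi_j$ itself, not to its square.

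To get a genuinely quadratic error, run the argument with balls of a fixed radius $R$ chosen so that $\eta<\delta$ off $\bigcup_kB_R(x^k)$. Then:
\begin{itemize}
\item Lemma~\ref{lemma tailconcavity} covers the whole complement of the balls;
\item the flux keeps its sign;
\item the remainder is $\lesssim\xi(|x^i-x^j|-R)^2\lesssim(\int\xi_i\xi_j)^2$, by \eqref{eq tail zeta} and the lower bound in \eqref{eq interaction function}.
\end{itemize}
With fixed $R$, your decomposition $\sum_iJ(\alpha_i\xi_i)$ is in fact the right way to treat the weights. The paper's term $|1-\alpha_i|\int_{B_R(x^i)^c}|f(\xi_i)\xi_i|$ would then be only first order in $|1-\alpha_i|$.
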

\begin{proof}

\medskip

\noindent{\it Preparations:}  Throughout the proof, we assume $T$ large enough such that \eqref{eq smallGammanu} holds and $|x^i(t)-x^j(t)| \geq \frac{1}{\delta}$ for $i\neq j$, where $\delta>0$ is suitably small. Define
\begin{equation}\label{eq defr}
    r(t)=\frac{1}{2}\min\{|x^i(t)-x^j(t)| \, |\, 1\leq i<j\leq M\}.
\end{equation} 
Then the balls $\{B_{r(t)}(x^i(t))\}_{i=1}^M$ are pairwise disjoint and drift apart as $t \to \infty$. For notational convenience, we suppress the explicit $t$-dependence of $r(t)$, the best-matching $M$-bubble $\eta(t)$ and its parameters for the rest of the proof.

\medskip

\noindent{\it Step 1:} We derive a collection of auxiliary integral inequalities in the balls $B_r(x^i)$ and in their complements.

\medskip

Let $i,j \in \{1,\dots,M\}$ with $i \neq j$.
We begin by multiplying
\begin{equation}\label{eq onebubbleeq}
\Delta \tau_{x^i}[\xi] = f\big(\tau_{x^i}[\xi]\big)
\end{equation}
by $\tau_{x^j}[\xi]$ and integrating by parts over $B_r(x^i)$ to obtain
\begin{eqnarray}\notag
\int_{B_r(x^i)} \nabla \tau_{x^i}[\xi]\cdot \nabla \tau_{x^j}[\xi]
&=& -\int_{B_r(x^i)} f(\tau_{x^i}[\xi]) \tau_{x^j}[\xi] \label{eq weak intergrad balls}\\
&&+ \xi'(r)\int_{\partial B_r(x^i)} \tau_{x^j}[\xi],
\end{eqnarray}
where we have used the decay of $\xi$ together with its radial symmetry. Set $B_{i,j} := B_r(x^i) \cup B_r(x^j)$. Proceeding as above, we multiply \eqref{eq onebubbleeq} by $\tau_{x^j}[\xi]$ and integrate by parts over $B_{i,j}^c$ to deduce
\begin{eqnarray}\label{eq weak intergrad balls2}
\int_{B_{i,j}^c} \nabla \tau_{x^i}[\xi]\cdot \nabla \tau_{x^j}[\xi]
&=& - \int_{B_{i,j}^c} f(\tau_{x^i}[\xi]) \tau_{x^j}[\xi] \notag\\
&&- \xi'(r)\int_{\partial B_r(x^i)} \tau_{x^j}[\xi]
- \xi(r) \int_{\partial B_r(x^j)} \partial_{\nu}\tau_{x^i}[\xi].
\end{eqnarray}
Since $f(0)=0$ and $f'(0)>0$, there exists $\delta_0>0$ such that $f>0$ on $(0,\delta_0)$. In particular, taking $T$ sufficiently large ensures that $f(\tau_{x^i}[\xi])>0$ in $B_r(x^j)$. Applying the divergence theorem to \eqref{eq onebubbleeq}, we obtain
\begin{equation}\label{eq sign f}
\int_{\partial B_r(x^j)} \partial_{\nu}\tau_{x^i}[\xi]
= \int_{B_r(x^j)} \Delta \tau_{x^i}[\xi]
= \int_{B_r(x^j)} f(\tau_{x^i}[\xi]) > 0.
\end{equation}

Adding \eqref{eq weak intergrad balls} for $(i,j)$ and $(j,i)$ together with \eqref{eq weak intergrad balls2} and combining it with \eqref{eq sign f}, we deduce for $i<j$ that
\begin{eqnarray}\notag
\int_{\mathbb{R}^n} \nabla \tau_{x^i}[\xi]\cdot \nabla \tau_{x^j}[\xi]
&<& -\int_{B_r(x^j)} f(\tau_{x^j}[\xi]) \tau_{x^i}[\xi] \notag\\
&&- \int_{B_r(x^i)} f(\tau_{x^i}[\xi]) \tau_{x^j}[\xi] \notag\\
&&- \int_{B_{i,j}^c} f(\tau_{x^i}[\xi]) \tau_{x^j}[\xi],
\label{eq inter gradient}
\end{eqnarray}
where we have exploited that $\xi'<0$ to control the term $\xi'(r)\int_{\partial B_r(x^j)} \tau_{x^i}[\xi]$.

By Proposition \ref{prop radialuniqueness}, we may choose $R>1$ sufficiently large so that $\xi \le \delta_0$ in $B_R^c$. By \eqref{eq defr}, taking $T$ sufficiently large ensures $r \ge R$, and hence $f(\xi)>0$ and $F(\xi)>0$ on $B_r^c$. Define
\[
C_{r} := \bigcap_{i=1}^M B_r(x^i)^c.
\]
Since $C_r \subset B_{i,j}^c$ for every pair $(i,j)$, summing \eqref{eq inter gradient} over $i<j$ yields
\begin{align}\notag
\sum_{1\le i<j\le M} \int_{\mathbb{R}^n}
\nabla \tau_{x^i}[\xi]\cdot \nabla \tau_{x^j}[\xi]
&\le - \sum_{i=1}^M \int_{B_r(x^i)} f(\tau_{x^i}[\xi])
\Big(\sum_{j\neq i} \tau_{x^j}[\xi]\Big) \notag\\
&\quad - \sum_{1\le i<j\le M}
\int_{C_r} f(\tau_{x^i}[\xi]) \tau_{x^j}[\xi].
\label{eq added grad inter}
\end{align}

Next, we estimate $F(\eta)-\sum_{i=1}^M F(\tau_{x^i}[\xi])$ by applying a second-order Taylor expansion of $F$ around $\tau_{x^i}[\xi]$ in $B_r(x^i)$ for $i=1,\dots,M$, which gives
\begin{align}\notag
\int_{\mathbb{R}^n} F(\eta)
- \sum_{i=1}^M F(\tau_{x^i}[\xi])
&= \sum_{i=1}^M \int_{B_r(x^i)}
f(\tau_{x^i}[\xi])(\eta-\tau_{x^i}[\xi]) \notag\\
&\quad +  \int_{B_r(x^i)}\sum_{i=1}^M
O\big((\eta-\tau_{x^i}[\xi])^2\big)- \sum_{j \neq i} F(\tau_{x^j(t)}\big[\xi\big]) \notag\\
&\quad + \int_{C_r}
\Big(F(\eta)-\sum_{i=1}^M F(\tau_{x^i}[\xi])\Big).
\label{eq added potential}
\end{align}

Since $F(\tau_{x^j}[\xi])>0$ in $B_r(x^i)$ for $j\neq i$, we deduce
\begin{align}\notag
\int_{\mathbb{R}^n} F(\eta)
- \sum_{i=1}^M F(\tau_{x^i}[\xi])
&\le \sum_{i=1}^M \int_{B_r(x^i)}
f(\tau_{x^i}[\xi])
\Big[(\alpha_i-1)\tau_{x^i}[\xi]
+ \sum_{j\neq i}\alpha_j \tau_{x^j}[\xi]\Big] \notag\\
&\quad + C\sum_{i=1}^M
\int_{B_r(x^i)} (\eta-\tau_{x^i}[\xi])^2 \notag\\
&\quad + \int_{C_r}
\Big(F(\eta)-\sum_{i=1}^M F(\tau_{x^i}[\xi])\Big).
\label{eq added potential2}
\end{align}

As a last preparatory observation, we notice that multiplying \eqref{eq onebubbleeq} by $\tau_{x^i}[\xi]$ and integrating by parts over $\mathbb{R}^n$ yields.
\begin{equation}\label{eq diff}
\int_{\mathbb{R}^n} \big|\nabla \tau_{x^i}[\xi]\big|^2
= - \int_{\mathbb{R}^n}
f(\tau_{x^i}[\xi]) \tau_{x^i}[\xi].
\end{equation}

\medskip

\noindent{ \it Step 2:} We estimate $J(\eta)-\sum_{i=1}^M J(\tau_{x^i}\big[\xi\big])$.\\

Combining \eqref{eq added grad inter}, \eqref{eq added potential2}, and \eqref{eq diff}, rearranging terms, and exploiting the uniform boundedness of the $\alpha_i$ for $t\ge T$ ensured by \eqref{eq smallGammanu}, we obtain \begin{eqnarray}\notag 
J(\eta)-\sum_{i=1}^M J(\tau_{x^i}\big[\xi\big])&=& \sum_{i=1}^M\int_{\R^n}(\alpha_i^2-1)\Big|\nabla\tau_{x^i}\big[\xi\big]\Big|^2 +\sum_{1\leq i < j \leq M}2 \alpha_i \alpha_j \nabla\tau_{x^i}\big[\xi\big]\cdot \nabla \tau_{x^j}\big[\xi\big]\\\notag
&& +\int_{\R^n}F (\eta)- \sum_{i=1}^M F(\tau_{x^i}\big[\xi\big])\\\notag && \leq \sum_{i=1}^M C(\alpha_i-1)^2\int_{B_r(x^i)} \big|f (\tau_{x^i}\big[\xi\big])\tau_{x^i}\big[\xi\big]\big|+ |\alpha_i-1| \int_{B_r(x^i)^c} \big|f (\tau_{x^i}\big[\xi\big])\tau_{x^i}\big[\xi\big] ]\big| \\\notag &&+ C\sum_{i=1}^M \int_{B_r(x^i)} |1-\alpha_i| \sum_{j\neq i} | \tau_{x^j}\big[\xi\big] f (\tau_{x^i}\big[\xi\big])|\\ \notag && + \int_{C_r} F (\eta)- \sum_{i=1}^M F(\tau_{x^i}\big[\xi\big])- \sum_{1\leq i < j \leq M} \alpha_i \alpha_j f(\tau_{x^i}\big[\xi\big]) \tau_{x^j}\big[\xi\big]\\ \label{eq added grad inter} &&+ C\int_{B_r(x^i)} (\eta-\tau_{x^i}\big[\xi\big])^2. \end{eqnarray}
We first analyze the term in the penultimate line of \eqref{eq added grad inter}. 
By the decay of $\xi$, we may take $T$ sufficiently large (and hence $r$ large) so that $\eta \le \delta$ in $C_{r}$ and Lemma~\ref{lemma tailconcavity} applies. Thus, in $C_r$
\begin{eqnarray}\notag &&F (\eta)- \sum_{i=1}^M F(\tau_{x^i}\big[\xi\big])- \sum_{1\leq i < j \leq M} \alpha_i \alpha_j f(\tau_{x^i}\big[\xi\big]) \tau_{x^j}\big[\xi\big]\\\notag &&= F (\eta)- \sum_{i=1}^M F(\alpha_i\tau_{x^i}\big[\xi\big]) -\sum_{1\leq i < j \leq M} \alpha_j f(\alpha_i\tau_{x^i}\big[\xi\big]) \tau_{x^j}\big[\xi\big]\\\notag && + \sum_{i=1}^M F(\alpha_i\tau_{x^i}\big[\xi\big])-F(\tau_{x^i}\big[\xi\big]) + \sum_{1\leq i < j \leq M} \alpha_j \tau_{x^j}\big[\xi\big](f(\alpha_i\tau_{x^i}\big[\xi\big])-\alpha_i f(\tau_{x^i}\big[\xi\big]))\\\label{eq tailF} && \leq C\sum_{i=1}^M |\alpha_i-1|\tau_{x^i}\big[\xi\big]^2+C\sum_{1\leq i < j \leq M} |\alpha_i-1|\tau_{x^i}\big[\xi\big]\tau_{x^j}\big[\xi\big], \end{eqnarray} 
Here we applied Lemma~\ref{lemma tailconcavity} to conclude that the first term after the equal sign is nonpositive. We also used the uniform boundedness of $|\alpha_i|$ and the estimate $|f(t)|\le Ct$. Combining \eqref{eq tailF} with elementary inequalities, we deduce from \eqref{eq added grad inter} that
\begin{eqnarray}\notag J(\eta)-\sum_{i=1}^M J(\tau_{x^i}\big[\xi\big]) &\leq& C\sum_{i=1}^M (\alpha_i-1)^2+ \Big(\int_{B_r(x^i)^c} \tau_{x^i}\big[\xi\big]^2 \Big)^2\\\label{eq added grad inter2} &&+ C\sum_{1\leq 1 <i<j\leq M} \Big( \int_{B_r(x^i)} \tau_{x^i}\big[\xi\big])\tau_{x^j}\big[\xi\big]\Big)^2.
\end{eqnarray}
We now recall that $r$ is defined in \eqref{eq defr}. Using Proposition~\ref{prop radialuniqueness} together with the interaction estimate \eqref{eq interaction function}, we obtain
\begin{equation*}
    \Big(\int_{B_r(x^i)^c} \tau_{x^i}\big[\xi\big]^2 \Big)^2\leq C\xi(r)^2 \leq C\sum_{1\leq 1 <i<j\leq M} \Big( \int_{B_r(x^i)} \tau_{x^i}\big[\xi\big])\tau_{x^j}\big[\xi\big]\Big)^2,
\end{equation*}
which combined with \eqref{eq added grad inter2} implies 
\begin{eqnarray}\notag
J(\eta)-\sum_{i=1}^M J(\tau_{x^i}\big[\xi\big]) &\leq& C\sum_{i=1}^M (\alpha_i-1)^2\\\label{eq added grad inter22} 
&&+ C\sum_{1\leq 1 <i<j\leq M} \Big( \int_{B_r(x^i)} \tau_{x^i}\big[\xi\big])\tau_{x^j}\big[\xi\big]\Big)^2.
\end{eqnarray}

\medskip

\noindent{ \it Step 3:} We show \eqref{eq keyestimate}.\\

Linearizing $J(\eta)=J(u-\rho)$ around $u$ gives \begin{eqnarray}\notag J(\eta)-J(u)& \geq&\int_{\R^n} |\nabla \rho|^2 + \frac{1}{2} f'(u) \rho^2\\\notag && -\int_{\R^n} 2\nabla u\cdot \nabla \rho+ f(u)\rho \\ \notag && -\int_{\R^n}C|\rho|^{2+\beta}. 
\end{eqnarray} 
Exploiting the $C^{\beta}$ regularity of $f'$, we change $f'(u)$ by $f'(\eta)$ in the previous equation at the expense of enlarging the error $|\rho|^{2+\beta}$. Thus,
\begin{eqnarray}\notag J(\eta)-J(u)& \geq&\int_{\R^n} |\nabla \rho|^2 + \frac{1}{2} f'(\eta) \rho^2\\\notag && -\int_{\R^n} 2\nabla u\cdot \nabla \rho+ f(u)\rho \\ \label{eq linearization u} && -\int_{\R^n}C|\rho|^{2+\beta}.
\end{eqnarray}
Testing \eqref{eq generalflow} with $\rho$ and integrating over $\R^n$ gives
\begin{equation}\label{eq testpar} -\int_{\R^n} 2\nabla u\cdot \nabla \rho - f(u)\rho= \int_{\R^n} \pa_t u \rho. \end{equation} Together \eqref{eq added grad inter22}, \eqref{eq linearization u}, and \eqref{eq testpar} yields \begin{eqnarray*}\notag J(u)- \sum_{i=1}^MJ\Big( \tau_{x^i}\big[\xi\big]\Big)&=& J(\eta)-\sum_{i=1}^M J(\tau_{x^i}\big[\xi\big]) - (J(\eta)-J(u))\\\notag &\leq& -\int_{\R^n} |\nabla \rho|^2 + \frac{1}{2}f'(\eta) \rho^2\\\notag &&+C\sum_{i=1}^M (\alpha_i-1)^2+ C \sum_{1\leq 1 <i<j\leq M} \Big( \int_{\R^n} \tau_{x^i}\big[\xi\big])\tau_{x^j}\big[\xi\big]\Big)^2\\
&&+ C\int_{\R^n}|\rho|^{2+\beta}-\int_{\R^n} \pa_t u \rho,
\end{eqnarray*} 
which proves \eqref{lemma difMbubble}.
\end{proof}

\subsection{Linear stability and energy deficit estimate}\label{subsec linear stability} 
For clarity, we now explain how Lemma~\ref{lemma difMbubble}, together with additional estimates, leads to the proof of the main theorems. More precisely, we show that \eqref{eq linear estimateog} and \eqref{eq linearerror} follow from the following two properties:
\begin{enumerate}
    	\item \textbf{Quantitative weak interaction estimate:} given $\delta>0$, there exists $T(\delta)$ such that if $t\geq T(\delta)$
	
	 \begin{equation}\label{eq quant weak interaction}  
		\sum_{i=1}^M|1-\alpha_i|+\sum_{1\leq i \neq j \leq M}\int_{\R^n}\tau_{x^i(t)}\big[\xi\big]\tau_{x^j(t)}\big[\xi\big]\leq \delta \Vert \rho(t)\Vert_{L^2(\R^n)} + C\Vert \partial_t u\Vert_{L^2(\R^n)}.
	\end{equation}

    \item  \textbf{Multibubble stability dichotomy:} there exists $T>1$ and $\delta_0>0$ such that if $t\geq T$ then either 
    \begin{equation}\label{eq condstab}
        \Vert \partial_t u\Vert_{L^2(\R^n)} \geq \delta_0 \Vert \rho(t)\Vert_{H^1(\R^n)}
    \end{equation} 
    or
	\begin{equation}\label{eq stability Mbubble0}
		\int_{\R^n}   |\nabla \rho(t)|^2 + f'(\eta(t)) \rho(t)^2 \geq \frac{1}{C}\Vert \rho(t)\Vert_{H^1(\R^n)}^2.
	\end{equation}
	
\end{enumerate}
We defer the proofs of these properties to the end of this section (see Lemma~\ref{lemma weakinter} and Corollary~\ref{cor stabilitydich}). We now explain how to combine them with Lemma~\ref{lemma difMbubble} to prove \eqref{eq linear estimateog} and \eqref{eq linearerror}.

\begin{lemma}\label{lemma linearestimate}
    Let $u$ be as in Theorem \ref{thm1} and assume \eqref{eq limbubbles}. Then there exists $T>0$ such that if $t\geq T$    
    \begin{equation}\label{eq linear estimate2}
J(u) -MJ(\xi)\leq C	\Vert \partial_t u\Vert^2_{L^2(\R^n)}. 
\end{equation}
\end{lemma}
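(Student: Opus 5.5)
We start from the $M$-bubble energy inequality \eqref{eq keyestimate} of Lemma~\ref{lemma difMbubble} and combine it with the two properties announced above: the quantitative weak interaction estimate \eqref{eq quant weak interaction} and the multibubble stability dichotomy \eqref{eq condstab}--\eqref{eq stability Mbubble0}. Throughout, $T$ is taken large enough that \eqref{eq smallGammanu} holds with a small $\delta$ and that $\Vert\rho(t)\Vert_{C^1}\leq\delta$. We split according to the dichotomy at each time $t\geq T$.

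\emph{Case 1: \eqref{eq condstab} holds}, i.e.\ $\Vert\rho\Vert_{H^1}\leq \delta_0^{-1}\Vert\partial_t u\Vert_{L^2}$. We go back to \eqref{eq keyestimate}, move the quadratic form to the right-hand side, and bound it crudely by $C\Vert\rho\Vert_{H^1}^2$, which is possible since $f'(\eta)$ is bounded. The remaining terms are handled as follows:
\begin{itemize}
\item the term $\Vert\partial_t u\Vert\,\Vert\rho\Vert$ is bounded by $C\Vert\partial_t u\Vert^2$;
\item the terms $|1-\alpha_i|^2$ and the squared interactions are bounded, via \eqref{eq quant weak interaction}, by $C(\delta\Vert\rho\Vert_{L^2}+\Vert\partial_t u\Vert)^2\leq C\Vert\partial_t u\Vert^2$;
\item the cubic-type error $\int|\rho|^{2+\beta}\leq \Vert\rho\Vert_{L^\infty}^\beta\Vert\rho\Vert_{L^2}^2$ is bounded likewise.
\end{itemize}
Altogether this gives $J(u)-MJ(\xi)\leq C\Vert\partial_t u\Vert_{L^2}^2$.

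\emph{Case 2: \eqref{eq stability Mbubble0} holds.} We keep the quadratic form on the left of \eqref{eq keyestimate}. Since the left side of \eqref{eq keyestimate} contains $\int 2|\nabla\rho|^2+f'(\eta)\rho^2\geq \int|\nabla\rho|^2+f'(\eta)\rho^2$, this yields
\[
J(u)-MJ(\xi)+\tfrac1C\Vert\rho\Vert_{H^1}^2\leq C\Vert\partial_t u\Vert\,\Vert\rho\Vert+C\delta^2\Vert\rho\Vert^2+C\Vert\partial_t u\Vert^2+\delta^\beta\Vert\rho\Vert^2 .
\]
Here \eqref{eq quant weak interaction} has again been used for the $\alpha$ and interaction terms. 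We choose $\delta$ small, i.e.\ $T$ large, so that $C\delta^2+\delta^\beta\leq \frac{1}{4C}$, which lets these terms be absorbed into the left-hand side. Young's inequality then gives $C\Vert\partial_t u\Vert\,\Vert\rho\Vert\leq \frac{1}{4C}\Vert\rho\Vert^2+C'\Vert\partial_t u\Vert^2$, and this piece is absorbed as well. We conclude that $J(u)-MJ(\xi)+\frac1{2C}\Vert\rho\Vert_{H^1}^2\leq C\Vert\partial_t u\Vert^2$, which in particular yields \eqref{eq linear estimate2}; as a by-product it also gives \eqref{eq linearerror} in this case. Note also that the left side $J(u)-MJ(\xi)$ is nonnegative by \eqref{eq bee}.

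\textbf{Main obstacle.} The one delicate point is that the smallness parameter in \eqref{eq quant weak interaction} multiplies $\Vert\rho\Vert_{L^2}$ and not $\Vert\rho\Vert_{H^1}$. This mismatch causes no trouble, since the $L^2$ norm is dominated by the $H^1$ norm and absorption only requires $\delta$ small compared with the coercivity constant in \eqref{eq stability Mbubble0}. The constant $\delta$ must therefore be fixed \emph{after} $C$ in \eqref{eq stability Mbubble0} and \eqref{eq keyestimate}, and only then is $T=T(\delta)$ chosen. Everything else consists of routine Young-type absorptions, with the substantive content deferred to Lemma~\ref{lemma weakinter} and Corollary~\ref{cor stabilitydich}.
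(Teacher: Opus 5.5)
Your proposal is correct and follows essentially the paper's argument, built from the same ingredients: Lemma~\ref{lemma difMbubble}, the weak interaction estimate, the stability dichotomy and Young-type absorption. The only difference is the order. The paper first derives $\Vert\rho\Vert_{H^1}\le C\Vert\partial_t u\Vert_{L^2}$ in both branches of the dichotomy, using \eqref{eq bee} to discard the deficit in the coercive branch, and then reinserts this into Lemma~\ref{lemma difMbubble} via the crude bound \eqref{eq boundsc}; you instead close the energy estimate directly within each branch.
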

\begin{proof}
 Let $T$ be such that Lemma \ref{lemma difMbubble} and the properties (1) and (2) hold with $\delta$ suitably small in \eqref{eq quant weak interaction}. We notice first that this readily implies
 \begin{eqnarray}\label{eq linear estimate1}
	\Vert \rho(t)\Vert_{H^1(\R^n)}\leq C\Vert \pa_t u\Vert_{L^2(\R^n)}.
\end{eqnarray}
Indeed, if \eqref{eq condstab} holds in the dichotomy in property (2), there is nothing to prove. If instead, \eqref{eq stability Mbubble0}
holds, we can combine it with Lemma \ref{lemma difMbubble}, \eqref{eq bee}, and \eqref{eq quant weak interaction} to deduce that \eqref{eq linear estimate1} holds for $t\geq T$.
Next, since
\begin{equation}\label{eq boundsc}
		\int_{\R^n}   |\nabla \rho(t)|^2 + f'(\eta(t)) \rho(t)^2 \leq C\Vert \rho(t)\Vert_{H^1(\R^n)}^2,
	\end{equation}
we can use Lemma \ref{lemma difMbubble} again in combination with \eqref{eq quant weak interaction} to bound this time the energy deficit as follows
\begin{eqnarray*}\notag
	J(u)- MJ(\xi) &\leq& C\Big(\Vert \rho(t)\Vert_{H^1(\R^n)}^2 +\Vert \partial_t u\Vert_{L^2(\R^n)}\Vert \rho(t)\Vert_{H^1(\R^n)} \Big)
\end{eqnarray*}
which combined with \eqref{eq linear estimate1} yields \eqref{eq linear estimate2} as we set out to prove.
\end{proof}
\subsection{Proof of Theorem \ref{thm1}}\label{Proof thm1}

We show how to Lemma \ref{lemma linearestimate} yield rates of convergence for $J(u)$ and how this, in turn, rules out bubbling.

\begin{proof}[Proof of Theorem \ref{thm1}]
   Let $u$ be a solution to \eqref{eq generalflow}. This solution is unique in virtue of the results in \cite{feireisl1997threshold}. Similarly, from \cite{feireisl1997threshold} we  have that $\lim_{t\to \infty} J(u(t)) \geq 0$ with $\lim_{t\to \infty} J(u(t)) = 0$ if and only if $\lim_{t\to \infty} \Vert u(t)\Vert_{W^{1,2}(\R^n)}=0$. Let us assume $\lim_{t\to \infty} J(u(t)) > 0$ and take  $M$ as in \eqref{eq limbubbles}. In virtue of Lemma \ref{lemma linearestimate}, we have that
\begin{equation}\label{eq linear estimate3}
C	\Vert \partial_t u\Vert^2_{L^2(\R^n)} \geq  J(u) -MJ(\xi),
\end{equation}
for $t \geq T$. By combining the dissipation identity \eqref{eq derentropy} with \eqref{eq linear estimate3} we deduce
\begin{equation}\label{eq diff ineq3}
\frac{d}{dt}\Big( J(u) -MJ(\xi)\Big)=  \Vert \partial_t u\Vert_{L^2(\R^n)}^2  \leq -\frac{1}{C}\Bigg( J(u) -MJ(\xi)\Bigg),
\end{equation}
for $t\geq T$. Solving the differential inequality in \eqref{eq diff ineq3}, we can integrate between $t>T$ and $\infty$ to deduce
\begin{equation}\label{eq convergence rate1}
	J(u(t))-MJ(\xi)\leq Ce^\frac{-t}{C}, \qquad{t\geq T}
\end{equation}
which combined with the dissipation identity implies
\begin{equation}\label{eq convergence ut1}
	\int_{t}^\infty \Vert \partial_t u\Vert_{L^2(\R^n)}^2 dt =J(u(t))-MJ(\xi)\leq Ce^\frac{-t}{C},
\end{equation}
for $t\geq T$. We claim now that \eqref{eq convergence ut1} implies that $u(t)$ converges strongly in $L^2(\R^n)$ as $t\to \infty$ which, in turn, combined with \eqref{eq bubbling} would imply that the flow converges to a unique bubble. To see this, let us notice that for $t_1>t_*>T$ with $t_1-t_*\leq 1$, we can combine the fundamental theorem of calculus with Minkowski's generalized inequality and \eqref{eq convergence ut1} to infer
\begin{eqnarray}\notag
	\Vert u(t) -u(t_*)\Vert_{L^{2}(\R^n)} &=& 	\Big(\int_{\R^n} \Big|\int_{t_*}^{t_1} \partial_t u \Big|^2\Big)^\frac{1}{2} \\\notag
	&\leq& \int_{t_*}^{t_1}\Big(\int_{\R^n} |\partial_t u|^2\Big)^\frac{1}{2}\\\label{eq Cauchy1}
		&\leq& Ce^\frac{-t_*}{C}
\end{eqnarray}
	 Then, given any $s_*>t_*$, we can apply \eqref{eq Cauchy1} along the sequence of times $t_i=t_*+i$ for $i=0,\cdots, k$ with $k=\lfloor s_*-t_*\rfloor$ and with $t_{k+1}=s_*$ so that 
	\begin{eqnarray}\notag
	\Vert u(s_*) -u(t_*)\Vert_{L^{2}(\R^n)} &\leq& \sum_{i=0}^{k} \Vert u(t_{i+1}) -u(t_i)\Vert_{L^{2}(\R^n)}\\\notag
	 &\leq& C\sum_{i=0}^{k}e^\frac{-t_i}{C}\\\notag
	  &\leq& Ce^\frac{-t_*}{C}\sum_{i=0}^{k}\Big( e^\frac{-1}{C}\Big)^i\\\label{eq cauchy21}
	   &\leq& Ce^\frac{-t_*}{C}.
	\end{eqnarray}	
	
	In virtue of the sub-sequential bubbling along the flow, \eqref{eq bubbling}, we can take a sequence of $\{s_k\}_{k\in \N}$ with $s_k \to \infty$ and such that $u(s_k)$ converges pointwise to $\tau_{x^*} [\xi]$ for some $x^* \in \R^n$. So, by combining \eqref{eq cauchy21} with Fatou's lemma yields
    \begin{equation*}
       \Vert \tau_{x^*} [\xi] -u(t_*)\Vert_{L^2(\R^n)} \leq  Ce^\frac{-t_*}{C},
    \end{equation*}
    which combined with standard parabolic regularity theory implies
     \begin{equation}\label{eq rateprof}
       \Vert \tau_{x^*} [\xi] -u(t_*)\Vert_{C^2(\R^n)\cap W^{2,2}(\R^n)} \leq  Ce^\frac{-t_*}{C},
    \end{equation}
    and thus, in particular, that $M=1$ in \eqref{eq convergence rate1} as we wanted to show.	
\end{proof}

\subsection{Quantiative weak interaction estimate}

\begin{lemma}\label{lemma weakinter}
  Let $u$ be as in Theorem \ref{thm1} satisfying \eqref{eq limbubbles} and let $\eta(t) = \sum_{i=1}^M \alpha_i(t)\tau_{x^i(t)}\big[\xi\big]$ be a best matching $M$-bubble. Given $\delta>0$, there exists $T(\delta)$ such that if $t\geq T(\delta)$
\begin{equation}\label{eq simplfied interaction} 
\sum_{1\leq i < j \leq M}\int_{\R^n}\tau_{x^i(t)}\big[\xi\big]\tau_{x^j(t)}\big[\xi\big]\leq \delta \Vert \rho(t)\Vert_{L^2(\R^n)} + C\Vert \partial_tu\Vert_{L^2(\R^n)}.
\end{equation}
\begin{equation}\label{eq quantalpha}
    \sum_{i=1}^M|1-\alpha_i|\leq  \delta \Vert \rho(t)\Vert_{L^2(\R^n)} + C\Vert \partial_tu\Vert_{L^2(\R^n)}.
\end{equation}
\end{lemma}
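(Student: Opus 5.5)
We test the equation satisfied by $\rho$ against well-chosen functions built from the bubbles and keep only the leading-order terms. Write the flow as $\partial_t u=\Delta u-f(u)$, substitute $u=\eta+\rho$, and use $\Delta\tau_{x^i}[\xi]=f(\tau_{x^i}[\xi])$. This gives
\begin{equation*}
\partial_t u=\Delta\rho-f'(\eta)\rho+\Big(\sum_{i=1}^M\alpha_i f(\tau_{x^i}[\xi])-f(\eta)\Big)+O(|\rho|^{1+\beta}).
\end{equation*}
The middle bracket $E:=\sum_i\alpha_i f(\tau_{x^i}[\xi])-f(\eta)$ is the interaction error. It has two parts. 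The first is linear in $(1-\alpha_i)$: near $x^i$, $E\approx(\alpha_i f(\xi)-f(\alpha_i\xi))\approx(\alpha_i-1)(f(\xi)-f'(\xi)\xi)$. The second is a genuine tail interaction, of size $\sim f'(\tau_{x^i}[\xi])\sum_{j\ne i}\tau_{x^j}[\xi]$ near $x^i$.

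To extract \eqref{eq quantalpha}, I will test the equation with $\psi_i:=\chi_i\tau_{x^i}[\xi']$, where $\chi_i$ is a cutoff supported in $B_r(x^i)$ with $r$ as in \eqref{eq defr}. The linear term $\int(\Delta\rho-f'(\eta)\rho)\psi_i$ equals $-D^2J(\tau_{x^i}[\xi])(\tau_{x^i}[\xi'],\rho)$ up to errors of order $(\|\eta-\tau_{x^i}[\xi]\|_{L^\infty(B_r(x^i))}+\xi(r))\|\rho\|_{L^2}$. By \eqref{eq orthpxi} the main part vanishes, and the error is at most $\delta\|\rho\|_{L^2}$ by \eqref{eq smallGammanu}. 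The nonlinear term is $O(\|\rho\|_{C^0}^\beta\|\rho\|_{L^2})\le\delta\|\rho\|_{L^2}$. The time derivative contributes $C\|\partial_tu\|_{L^2}$.

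The error term $\int E\psi_i$ splits into two pieces. The first is
\begin{equation*}
(1-\alpha_i)\int(f'(\xi)\xi-f(\xi))\xi'+O(|1-\alpha_i|^2),
\end{equation*}
and the identity $\Delta\xi-f'(\xi)\xi=f(\xi)-f'(\xi)\xi$ shows that $\int(f'(\xi)\xi-f(\xi))\xi'=Q(\xi,\xi')>0$ by Proposition \ref{thm nondegneracy}(1). The second is bounded by $C\int_{B_r(x^i)}\xi\,\tau_{x^j}[\xi]$-type terms. Summing over $i$, this yields
\begin{equation*}
\sum_i|1-\alpha_i|\le\delta\|\rho\|_{L^2}+C\|\partial_tu\|_{L^2}+C\sum_{i<j}\int\tau_{x^i}[\xi]\tau_{x^j}[\xi],
\end{equation*}
so \eqref{eq quantalpha} follows once \eqref{eq simplfied interaction} is proven.

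For \eqref{eq simplfied interaction}, I will again test, now with the translation modes $\chi_i\partial_e\tau_{x^i}[\xi]$, taking $e=(x^j-x^i)/|x^j-x^i|$ for the closest pair. The linear part vanishes up to $\delta\|\rho\|$ because $\partial_e\xi$ lies in the kernel of $Q$. The $(1-\alpha_i)$ contribution vanishes by oddness, since $(f'(\xi)\xi-f(\xi))\partial_e\xi$ is odd. What remains is the interaction integral
\begin{equation*}
\int_{B_r(x^i)}\Big(f(\eta)-\sum_k\alpha_kf(\tau_{x^k}[\xi])\Big)\partial_e\tau_{x^i}[\xi]\approx\sum_{j\ne i}\int f'(\tau_{x^i}[\xi])\tau_{x^j}[\xi]\,\partial_e\tau_{x^i}[\xi],
\end{equation*}
and the goal is to show that this is comparable to $\int\tau_{x^i}[\xi]\tau_{x^j}[\xi]$ with a definite sign. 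Integrating by parts, this quantity equals $\int\tau_{x^j}[\xi]\,\partial_e f(\tau_{x^i}[\xi])=-\int f(\tau_{x^i}[\xi])\,\partial_e\tau_{x^j}[\xi]$ plus boundary terms. Because $\xi$ is radially decreasing, $\partial_e\tau_{x^j}[\xi]$ has a fixed sign in $B_r(x^i)$ and is comparable to $\tau_{x^j}[\xi]$ there, by \eqref{eq tail zeta'}.

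This sign and comparability step is the main obstacle. The difficulty is that, with several bubbles, contributions from different $j$ could cancel. To avoid this I will choose $i$ and $e$ extremal: take $x^i$ to be an extreme point of the convex hull of the centers, and $e$ a direction such that every $x^j-x^i$ has a positive $e$-component. Then all the $\partial_e\tau_{x^j}[\xi]$ have the same sign on $B_r(x^i)$, and the interactions involving $x^i$ are bounded by $\delta\|\rho\|+C\|\partial_tu\|+C|1-\alpha|$-terms times $\nu$. Removing that bubble and iterating, or using \eqref{eq interaction function} to compare pair interactions with $\xi(r)$ for the minimal distance $r$, should control the whole sum. I expect to use \eqref{eq KPP} here to guarantee that $f(\xi)-f'(0)\xi$ has the sign needed for the boundary/tail terms to be absorbed rather than to compete.

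Finally, combining this with the $\alpha$-estimate, the resulting cross term $C\nu|1-\alpha|$ is absorbed because $\nu$ is small for large $T(\delta)$.
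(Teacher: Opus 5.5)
Your $\alpha$-estimate is essentially the paper's Step 3: you test against $\tau_{x^i}[\xi']$, remove the linear part with \eqref{eq orthpxi}, and use $Q(\xi,\xi')>0$. Your architecture for the interaction bound (translation modes at an extremal center, then peeling off bubbles) is also the paper's. The gap is in the interaction step, where the leading term is misidentified.

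Write $\xi_i:=\tau_{x^i}[\xi]$ and $\omega_i:=\sum_{j\neq i}\alpha_j\tau_{x^j}[\xi]$. On $B_r(x^i)$ the other bubbles enter not only through $f(\eta)$ but also through $\sum_{j\neq i}\alpha_j f(\tau_{x^j}[\xi])=f'(0)\,\omega_i+O\big(\sum_{j\neq i}\tau_{x^j}[\xi]^{1+\beta}\big)$. Hence
\[
f(\eta)-\sum_k\alpha_k f(\tau_{x^k}[\xi])=\big[f(\alpha_i\xi_i)-\alpha_i f(\xi_i)\big]+\big(f'(\alpha_i\xi_i)-f'(0)\big)\,\omega_i+O\Big(\sum_{j\neq i}\tau_{x^j}[\xi]^{1+\beta}\Big).
\]
The term $-f'(0)\omega_i$ that you dropped is of exactly the same order as the $f'(\xi_i)\omega_i$ you kept. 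Use $(f'(\xi_i)-f'(0))\partial_e\xi_i=\partial_e[f(\xi_i)-f'(0)\xi_i]$ and integrate by parts. Up to lower-order errors, the correct main term is then
\[
\int\big(f'(0)\xi_i-f(\xi_i)\big)\big(\chi_i\,\partial_e\omega_i+\omega_i\,\partial_e\chi_i\big),
\]
which is the paper's \eqref{eq lhs}.

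This matters because your sign argument fails for the expression $-\int f(\xi_i)\,\partial_e\tau_{x^j}[\xi]$ that you wrote. The weight $f(\xi_i)=\Delta\xi_i$ changes sign: it is positive in the tail, has zero total integral, and is negative near $x^i$. So a fixed sign of $\partial_e\tau_{x^j}[\xi]$ gives no sign for the integral. Moreover, with $\chi_i$ living on all of $B_r(x^i)$, your integral is dominated by the far region where $f(\xi_i)\approx f'(0)\xi_i$, and that is precisely the part the missing $f'(0)\omega_i$ term cancels.

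In the correct expression the weight $f'(0)\xi-f(\xi)$ is nonnegative by \eqref{eq KPP}, bounded below near the center, and $O(\xi^{1+\beta})$ at infinity. So it is localized, and the cutoff term is harmless. In other words, \eqref{eq KPP} supplies the coercivity of the main term itself; it is not a device for absorbing boundary or tail terms.

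What you then need is $-\partial_e\omega_i\geq c\,\omega_i$, uniformly in $t$, on a fixed neighborhood of $x^i$. This requires the quantitative cone condition of Lemma \ref{lemma ch}, i.e. a uniform lower bound on $(x^j-z)\cdot e/|x^j-z|$ for $z$ near $x^i$. Mere positivity of the $e$-components of $x^j-x^i$ is not enough: it gives neither comparability nor the sign on all of $B_r(x^i)$ when $M\ge 3$. The quantitative condition yields a lower bound $\gtrsim\omega_i(x^i)$, which the paper then converts into pair interactions through \eqref{eq interaction function}.
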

\begin{proof}
\medskip 

\noindent {\it Preparations:} Our first step consists in linearizing $\partial_t u = \Delta u-f(u)$ around each individual bubble. Observe that the $M$-bubble $\eta$ satisfies
\begin{equation*}\label{eq xi}
	\Delta \eta(t) = \sum_{i=1}^M \alpha_i(t)f(\tau_{x^i(t)}\big[\xi\big]).
\end{equation*}

We now introduce notation that will be used throughout the proof. Let $ X(t)= \{x^1(t), x^2(t),\cdots, x^M(t)\}$ be the set of centers at time $t$. Fix any center $y^1(t) \in X(t)$ with corresponding associated coefficient $\alpha(t)$ and consider $X_1(t) = X(t)\setminus\{y^1(t)\}$,  $\omega_{1}(t)= \sum_{x^i(t) \in X_1(t)} \alpha_i(t)\tau_{x^i(t)}\big[\xi\big]$ and $\xi_1(t) = \tau_{y^1(t)}\big[\xi\big]$. For notational simplicity, we suppress the explicit dependence on $t$ in what follows.\\

Subtracting the equations for $u$ and $\eta$, and adding and subtracting suitable terms, we obtain the identity
\begin{align}
\sum_{x^i \in X_1} \alpha_i f(\tau_{x^i}[\xi])
+ \big[\alpha f(\xi_1) - f(\alpha \xi_1)\big]
- f'(\alpha \xi_1)\omega_1
&=
-2\Delta \rho
+ f'(\alpha \xi_1)\rho
+ \partial_t u
\notag \\
&\quad
+ \big[f(u)-f(\eta)-f'(\eta)\rho\big]
\notag \\
&\quad
+ \big[f(\eta)-f(\alpha\xi_1)-f'(\alpha\xi_1)\omega_1\big]
\notag \\
&\quad
+ \big[f'(\eta)\rho - f'(\alpha\xi_1)\rho\big].
\label{eq interid}
\end{align}
he identity \eqref{eq interid} will be central in the derivation of both \eqref{eq simplfied interaction} and \eqref{eq quantalpha}, although the two estimates require different arguments. Thanks to the $C^{1,\beta}$ regularity of $f$, the nonlinear error terms on the right-hand side of \eqref{eq interid} satisfy
\begin{eqnarray}\label{eq aux1}
	|f(u)-f(\eta)-f'(\eta)\rho|&\leq& C|\rho|^{1+\beta},\\\label{eq aux2}
|f(\eta)-f(\alpha\xi_1)-f( \alpha\xi_1)\omega_{1}|&\leq& C|\omega_{1}|^{1+\beta},\\\label{eq aux3}
	|f'(\eta)\rho-f'( \eta)\rho|&\leq& C|\omega_{1}|^{\beta}|\rho|.
\end{eqnarray}

As a last preparatory ingredient, we construct a family of cutoff functions. Let $\gamma>0$ (to be chosen later) and consider a radial smooth function$0\le \phi\le 1$ satisfying:
\begin{itemize}
\item $\phi=\phi_\gamma\in C_c^\infty(\R^n)$, 
      $\phi\equiv 1$ in $B_{r_1}$, and 
      $|\nabla\phi|\le\gamma$;
\item $r_1=r_1(\gamma)$ is chosen so that
\begin{equation}
\label{eq bound bump}
\xi(x) \le \frac{\gamma}{M}
\quad \text{for } x\in \R^n\setminus B_{r_1}.
\end{equation}
The existence of such $r_1$ follows from the decay of $\xi$ (see Lemma~\ref{lemma pairwise inter});

\item moreover, Lemma~\ref{lemma pairwise inter} allows us to choose $r_1$ so that
\begin{equation}
\label{eq L1 bound bump}
\int_{\R^n\setminus B_{r_1}}
(\xi')^2 + \xi^2
\le \gamma.
\end{equation}
\end{itemize}

\medskip

\noindent {\it Step 2:} We prove \eqref{eq simplfied interaction}.\\

Set $I(t) =  \sum_{1\leq i < j \leq M}\int_{\R^n}\tau_{x^i(t)}\big[\xi\big]\tau_{x^j(t)}\big[\xi\big]$. We aim to control quantitatively the pairwise interactions
\begin{equation}\label{eq pairwise interaction}
	g_{ij}(t) = \int_{\R^n} \tau_{x^j(t)}\big[\xi\big] \tau_{x^i(t)}\big[\xi\big]
\end{equation}
 for $i, j =1\cdots , M$ with $i \neq j$. Before specifying the order in which these interactions will be estimated, we establish a general bound. Fix a center $y^1(t)\in X(t)$ (to be chosen later) and a unit vector $e_1(t)\in\mathbb{S}^{n-1}$, also to be specified. We also suppress the explicit dependence on $t$ of these objects in what follows.

Since $f\in C^{1,\beta}$, we have the expansion near zero
\begin{equation}\label{eq lin at 0}
	\sum_{x^i \in X_1} \alpha_i f(\tau_{x^i}\big[\xi\big]) = f'(0)\omega_{1}+ \sum_{x^i \in X_1}O(\tau_{x^i}\big[\xi\big]^{1+\beta}).
\end{equation}

Combining \eqref{eq interid} with \eqref{eq lin at 0}, and using again the $C^{1,\beta}$ regularity of $f$, we obtain
\begin{align}
(f'(0)-f'(\xi_1))\omega_1
&=
-\Delta\rho
+ f'(\xi_1)\rho
+ \partial_t u
\notag \\
&\quad
+ \big[f(u)-f(\eta)-f'(\eta)\rho\big]
\notag \\
&\quad
+ \big[f(\eta)-f(\alpha\xi_1)-f'(\alpha\xi_1)\omega_1\big]
\notag \\
&\quad
+ \big[f'(\eta)\rho - f'(\xi_1)\rho\big]
\notag \\
&\quad
+ O\big(|1-\alpha|^\beta |\rho|\big)
+ \sum_{x^i\in X_1}
O\big(\tau_{x^i}[\xi]^{1+\beta}
+ |1-\alpha|^\beta \tau_{x^i}[\xi]\xi_1^\beta\big).
\label{eq interation equation}
\end{align}

By combining \eqref{eq interid} and \eqref{eq lin at 0}, and using one more time the $C^{1,\beta}$ regularity of $f$, we have
\begin{eqnarray}\notag
(f'(0) - f'(\xi_1))\omega_1&=&-  \Delta \rho+f'(\xi_1) \rho + \partial_t u  \\\notag
	&&+[f(u)-f(\eta)-f'(\eta)\rho]\\\notag
	&&+ [f(\eta)-f(\alpha\xi_1)-f'( \alpha\xi_1)\omega_1]\\\notag
	&&+ [f'(\eta)\rho-f'(\xi_1)\rho]\\\notag
    &&+ \rho O\big(\xi_1|1-\alpha|\big)^\beta\\\label{eq interation equation}
	&&+ \sum_{x^i \in X_1}O(\tau_{x^i}\big[\xi\big]^{1+\beta}+|1-\alpha|^\beta\tau_{x^i}\big[\xi\big]\xi_1^\beta).
\end{eqnarray}

We now multiply \eqref{eq interation equation} by 
$\partial_{e_1}\xi_1\psi_1$, where 
$\psi_1=\tau_{y^1}\phi$ and $\phi=\phi_\gamma$ is the cutoff introduced earlier. Our aim at this stage is to get a bound for the weak interaction between $\xi_1$ and $\omega_{1}$. For convenience, we denote by $E(t)$ quantities vanishing as $t\to\infty$, and by $E(\gamma)$ quantities vanishing as $\gamma\to0^+$. Testing \eqref{eq interation equation} combining it with \eqref{eq aux1}, \eqref{eq aux2}, and \eqref{eq aux2} yields
\begin{eqnarray}\notag
	\Big|\int_{\R^n}\Big(f'(0) - f'(\xi_1)\Big)\omega_1\partial_{e_1} \xi_1 \psi_1\Big|&\leq& \Big|\int_{\R^n} (-   \Delta \rho+f'(\xi_1) \rho) \partial_{e_1} \xi_1 \psi_1\Big|\\\notag
	&& + \Vert \partial_t u\Vert_{L^2(\R^n)} \Vert \partial_{e_1} \xi_1 \psi_1\Vert_{L^2(\R^n)} \\\notag
	&&+C\int_{\R^n} |\rho|^{1+\beta} |\partial_{e_1} \xi_1 \psi_1|\\\notag
	&&+C\int_{\R^n} |\omega_1|^{\beta}|\rho||\partial_{e_1} \xi_1 \psi_1|\\\notag
	&&+ C\int_{\R^n}(|\omega_1|^{1+\beta}+\omega_1|1-\alpha|^\beta)|\partial_{e_1} \xi_1 \psi_1|\\ \label{eq weak estimate1}
    &&+  C|1-\alpha|^\beta\int_{\R^n} |\rho| |\partial_{e_1} \xi_1 \psi_1|
\end{eqnarray} 

We proceed to rewrite or estimate suitably each one of the terms of the previous inequality. Starting from the left hand side, we can integrate by parts to deduce
\begin{equation}\label{eq ft LHS}
	\int_{\R^n} \omega_1\partial_{e_1} \xi_1 \psi_1 = - \int_{\R^n}  (\partial_{e_1}  \omega_1 )\xi_1 \psi_1+    \omega_1 \xi_1 \partial_{e_1}\psi_1,
\end{equation}
and that
\begin{equation}\label{eq st LHS}
	\int_{\R^n} \omega_{1}\partial_{e_1}(f'(\xi_1))  \psi_1 = - \int_{\R^n} f(\xi_1)(\partial_{e_1}  \omega_{1}  \psi_1+    \omega_{1}  \partial_{e_1}\psi_1).
\end{equation} 
Thus, \eqref{eq ft LHS} and \eqref{eq st LHS} altogether yields
\begin{eqnarray}\notag
&-&\int_{\R^n}\Big(f'(0) - f'(\xi_1)\Big)\omega_{1}\partial_{e_1} \xi_1 \psi_1\\\notag
&=& \int_{\R^n} \xi_1 (\partial_{e_1}  \omega_{1} )\Big( f'(0) -\frac{f(\xi_1)}{\xi_1}\Big)  \psi_1\\\notag
&&+f'(0)\int_{\R^n} \partial_{e_1}\psi_1 \xi_1\omega_{1} \\\label{eq lhs}
&&- \int_{\R^n} f(\xi_1) \partial_{e_1}\psi_1 \xi_1\omega_{1} .
\end{eqnarray}

The first term on the right hand side of \eqref{eq lhs} will play a central role in our estimate, whereas the other two will be treated a errors since, in virtue of \eqref{eq small inter},
\begin{equation}\label{eq inter outside ball}
 \int_{\R^n} |\partial_{e_1}\psi_1 \xi_1\omega_1|\leq C\int_{ \text{supp}(\psi_1)^c} \psi_1 \xi_1\omega_1 \leq E(\gamma)I(t).
\end{equation}

Before continuing with the right hand side of \eqref{eq weak estimate1}, let us notice that by differentiating $\Delta \xi_1 = f(\xi_1)$, we have that
\begin{equation}\label{eq equation derivative}
	-   \Delta (\partial_{e_1} \xi_1)+f'(\xi_1) \partial_{e_1} \xi_1=0.
\end{equation}
Therefore, integrating by parts the first term in the right hand side of \eqref{eq weak estimate1} yields
\begin{eqnarray}\label{eq rho error} 
\Big|	\int_{\R^n} (-  \e \Delta \rho+f'(\xi_1) \rho) \partial_{e_1} \xi_1 \psi_1\Big| &=& \Big| \e\int_{\R^n} \nabla \rho \cdot \nabla \psi \partial_{e_1} \xi_1  \Big|\\\notag
&&\leq  C\Vert \rho\Vert_{H^1(\R^n)} \Vert \nabla \psi  \xi_1' \Vert_{L^2(\R^n)}.
\end{eqnarray}

Thus, by combining  \eqref{eq weak estimate1},  \eqref{eq lhs}, \eqref{eq inter outside ball}, and \eqref{eq rho error} yields
\begin{eqnarray}\notag
\Big| \int_{\R^n} \xi_1 (\partial_{e_1}  \omega_{1} )\Big( f'(0) -\frac{f(\xi_1)}{\xi_1}\Big)  \psi_1\Big| &\leq& (E(\gamma)+E(t))(\Vert \rho\Vert_{H^1(\R^n)} + I(t))\\\label{eq key bound}
&&+C\Vert \partial_t u\Vert_{L^2(\R^n)}.
\end{eqnarray}

The rest of the argument is devoted to exploit \eqref{eq key bound} strategically to conclude the claim of this step. Assumption \eqref{eq KPP} guarantees that
\begin{equation}\label{eq positive W}
	f'(0) -\frac{f(\xi)}{\xi}  	\geq 0, \quad \mbox{in $\R^n$}.
\end{equation}
Moreover, since $\xi$ attains its maximum at the origin, $f(\xi)(0)=\Delta \xi(0)\leq 0$, which combined with $f'(0)>0$, implies
\begin{equation}\label{eq strict positivity assump}
 f'(0) -\frac{f(\xi)}{\xi} \geq \frac{1}{C}, \quad \mbox{ in $ B_{R_0}$},
\end{equation}
for some $R_0>0$. Thus \eqref{eq tail zeta'}, \eqref{eq interaction function}, \eqref{eq positive W}, and  \eqref{eq strict positivity assump} imply that for $|x| \geq 2R_0$
\begin{equation}\label{eq interaction ls}
	\int_{B_\frac{R_0}{2}}\Big(f'(0) -\frac{f(\xi)}{\xi}\Big)\xi \tau_{x}\big[|\xi'|\big] \geq 	\frac{1}{C}\int_{\R^n}\xi \tau_{x}\big[\xi\big] .
\end{equation}

The other main ingredient for the estimate is provided by Lemma \ref{lemma ch}. Our plan is to apply it inductively in order to conclude this step. Let us select $y^1 \in X$ and $e_1 \in \mathbb{S}^{n-1}$ given by Lemma \ref{lemma ch}. In virtue of Lemma \ref{lemma ch} and \eqref{eq tail zeta'}, 
\begin{eqnarray}\notag
	-\partial_{e_1} \omega_1 &=&  \sum_{x^i \in X_{1}} \alpha_i \tau_{x^i}\big[-\xi'\big] \frac{(x-x^i)\cdot e_i}{|x-x^i|}\\\notag	
	 &\geq& \frac{1}{C}\sum_{x^i \in X_{1}} \alpha_i \tau_{x^i}\big[-\xi'\big]\\\label{eq derivative far bubbles}
	 &\geq& \frac{1}{C }\omega_1,
\end{eqnarray}
for $x\in B_{r_1+1}(y^1)\subset \text{supp}(\psi_1)$ and where we have taken $T$ sufficinetly large so that $\alpha_i>\frac{1}{2}$. \\

 Hence, by combining \eqref{eq interaction ls} and \eqref{eq derivative far bubbles} we derive the bound
 \begin{equation}\label{eq bad centers iso}
 	\Big| \int_{\R^n} \xi_1 (\partial_{e_1}  \omega_1 )\Big( f'(0) -\frac{f(\xi_1)}{\xi_1}\Big)  \psi_1\Big| \geq \frac{1}{C }\omega_1 \tau_{y^1}\big[\xi \big]
 \end{equation}
which implies
\begin{eqnarray}\label{eq small ball}
	\int_{\R^n}   \tau_{y^1}\big[\xi \big] \sum_{x^i \in X_1}  \tau_{x^i}\big[\xi \big]  \leq \Big((E(t)+E(\gamma))\Vert \rho\Vert_{H^1(\R^n)} +\Vert \partial_t u\Vert_{L^2(\R^n)}\Big).
\end{eqnarray}

Let us define $X_k = X_{k-1} \setminus \{y^k\}$, and $\xi_k = \tau_{y^k} \xi$ with $X_0=X$. Proceeding inductively, let us assume that
\begin{eqnarray}\label{eq inductive interaction}
	\int_{\R^n} \xi_{l} \tau_{y^1}\big[\xi \big] \sum_{x^i \in X_l}  \tau_{x^i}\big[\xi \big] \leq (E(t)+E(\gamma))(\Vert \rho\Vert_{H^1(\R^n)})+C\Vert \partial_t u\Vert_{L^2(\R^n)},
\end{eqnarray}
for $l=1,\cdots, k$. Take $y^{k+1}  \in X_k$ and $e_{k+1} \in \mathbb{S}^{n-1}$ as in Lemma \ref{lemma ch} and set accordingly $\xi_{k+1} = \tau_{y^{k+1}}[\xi]$  and $\omega_{k+1}= \sum_{x^i \in X_{k}} \alpha_i\tau_{x^i}\big[\xi\big]$. Thus, by combining \eqref{eq key bound}, \eqref{eq derivative far bubbles} applied to $\omega_{k+1}$ and \eqref{eq inductive interaction}, we deduce  
\begin{eqnarray}\label{eq inductive interaction}
	\int_{\R^n} \xi_{k+1}\tau_{y^1}\big[\xi \big] \sum_{x^i \in X_{k+1}}  \tau_{x^i}\big[\xi \big] \leq (E(t)+E(\gamma))(\Vert \rho\Vert_{H^1(\R^n)})+C\Vert \partial_t u\Vert_{L^2(\R^n)},
\end{eqnarray}
for $k=1,\cdots, M-1$. So, by adding up all the estimates \eqref{eq inductive interaction} in $k$, we deduce 
\begin{eqnarray*}
I(t) \leq (E(t)+E(\eta))(\Vert \rho\Vert_{H^1(\R^n)})+C\Vert \partial_t u\Vert_{L^2(\R^n)},
	\end{eqnarray*}
from where \eqref{eq simplfied interaction}  follows by taking $T(\delta)$ large enough and $\gamma$ sufficiently small.

\medskip

\noindent {\it Step 3:} We prove \eqref{eq quantalpha}.\\

Starting from \eqref{eq interid}, we rearrange the terms and exploit the  $C^{1,\beta}$ regularity as in \eqref{eq interation equation} to deduce
\begin{eqnarray}\notag
[\alpha f(\xi_1)- f(\alpha \xi_1)]&=&- 2 \Delta \rho+f'(\xi_1) \rho + \partial_t u  \\\notag
	&&+[f(u)-f(\eta)-f'(\eta)\rho]\\\notag
	&&+ [f(\eta)-f(\alpha\xi_1)-f'( \alpha\xi_1)\omega_1]\\\notag
	&&+ [f'(\eta)\rho-f'(\xi_1)\rho]\\\notag
    &&+ \rho O\big(\xi_1|1-\alpha|\big)^\beta\\ \notag
	&&+ \sum_{x^i \in X_1}O(\tau_{x^i}\big[\xi\big]+|1-\alpha|^\beta\tau_{x^i}\big[\xi\big]\xi_1^\beta)\\ \label{eq alpha estimate}
   && +\sum_{x^i \in X_1}  O(\xi_1^\beta\tau_{x^i}\big[\xi\big]).
\end{eqnarray}
We now multiply \eqref{eq alpha estimate} by $\psi\,\xi_1'$ and integrate over $\R^n$. 
Using the expansion
\[
\alpha f(\xi_1)-f(\alpha \xi_1)
=
(1-\alpha)\big[f(\xi_1)-f'(\xi_1)\xi_1\big]
+
O(|1-\alpha|^{1+\beta}),
\]
together with \eqref{eq aux1} and \eqref{eq aux2}, we obtain
\begin{eqnarray}\notag
|1-\alpha|\Big| \int_{\R^n}[f(\xi_1)-f'(\xi_1)\xi_1] \xi_1' \psi \Big| &\leq & \Big|\int_{\R^n} [- 2 \Delta \rho+f'(\xi_1) \rho] \xi_1' \psi \Big| + C\Vert\partial_t u\Vert_{L^2(\R^n)} \\\notag
	&&+C\int_{\R^n} |\rho|^{1+\beta}  |\xi_1'| \psi_1\\\notag
	&&+C\int_{\R^n} |\omega_{1}|^{\beta}|\rho|| \xi_1'\psi_1|\\\notag
	&&+ C\int_{\R^n}(|\omega_1|^{1+\beta}+\omega_1|1-\alpha|^\beta)| \xi_1' \psi_1|\\ \notag
    &&+  C|1-\alpha|^\beta\int_{\R^n} |\rho| | \xi_1' \psi_1|\\ \label{eq alpha2ndbound} 
	&&+C|1-\alpha|^{1+\beta}+ CI(t).
\end{eqnarray}

Testing $\Delta \xi = f(\xi)$ against $\xi'$ gives 
$$\int_{\R^n}[f(\xi_1)-f'(\xi_1)\xi_1] \xi_1'  = - \int_{\R^n} \nabla \xi_1\cdot \nabla \xi' + f'(\xi_1)\xi_1\xi' =-D^2J(\xi_1)(\xi', \xi)<0$$
where the last inequality follows from Lemma~\ref{thm nondegneracy}. 
Hence, choosing the radius $r_1$ (the radius of the ball where $\phi=1$) sufficiently large, we may ensure
\begin{equation}\label{eq posrhsalpha}
    \int_{\R^n}[f(\xi_1)-f'(\xi_1)\xi_1] \xi_1 \psi = -\kappa<0.
\end{equation}
On the other hand, by exploiting \eqref{eq orthpxi} and integrating by parts we obtain
\begin{eqnarray}\notag
   \Big| \int_{\R^n} [- 2 \Delta \rho+f'(\xi_1) \rho] \xi_1' \psi \Big| &\leq& \Big|\int_{\R^n} [ 2 \nabla \rho \cdot \nabla \xi_1'+f'(\xi_1) \rho\xi'] (1- \psi)\Big|+ 2 \int_{\R^n} \xi_1' |\nabla \rho| |\nabla \psi|\\\label{eq finalpha}
   &&\leq C\Vert \rho\Vert_{W^{1,2}(\R^n)}\Vert \xi' \chi_{B_{r_1}^c}\Vert_{L^2(\R^n)}.
\end{eqnarray}
Then, as a combination of \eqref{eq alpha2ndbound}, \eqref{eq posrhsalpha}, and \eqref{eq finalpha}, we have deduced
\begin{equation}\label{eq penultimateal}
    |1-\alpha| \leq (E(t)+E(\gamma))(\Vert \rho\Vert_{H^1(\R^n)})+C\Vert \partial_t u\Vert_{L^2(\R^n)}+C|1-\alpha|^{1+\beta}+ CI(t).
\end{equation}
Since $|1-\alpha(t)|\to 0$ as $t\to\infty$, the term 
$|1-\alpha|^{1+\beta}$ can be absorbed into the left-hand side of \eqref{eq penultimateal} for $t$ sufficiently large. 
Combining this with \eqref{eq simplfied interaction}, we may choose $T$ sufficiently large and $\gamma$ sufficiently small so that
\begin{equation}\label{eq penultimatea2}
    |1-\alpha| \leq \frac{\delta}{M}\Vert \rho\Vert_{H^1(\R^n)}+C\Vert \partial_t u\Vert_{L^2(\R^n)}
\end{equation}
Finally, since $\alpha$ was an arbitrary coefficient $\alpha_i$, summing \eqref{eq penultimatea2} over $i$ yields \eqref{eq quantalpha}.
\end{proof}

\subsection{Multibubble stability dichotomy:} We start this part generalizeing the stability estimate \eqref{eq nondegJ} for $M$-bubbles.

\begin{lemma}\label{lemma multibybble}
Let $\nu >0$ and let $\eta = \sum_{i=1}^M \alpha_i\tau_{x^i}[\xi]$ be a $M$-bubble such that
\begin{equation}\label{eq nu weak interation}
		\sum_{i=1}^M |1-\alpha_i|+\sum_{1\leq i< j \leq M}\int_{\R^n} \tau_{x^i}\big[\xi\big] \tau_{x^j}\big[\xi\big] \leq \nu.
	\end{equation}
Assume that $h\in H^1(\R^n)$ satisfies the almost orthogonality conditions
\begin{equation}\label{eqn a. orthogonality h1}
	\sum_{i=1}^M \sum_{k=1}^n\Big|	\int_{\R^n} h\partial_k\tau_{x^i}[\xi]\Big| \leq \mu \Vert h \Vert_{H^1(\R^n)}
\end{equation}
and
\begin{equation}\label{eqn a. orthogonality h2} 
	\sum_{i=1}^M\Big|D^2J(\tau_{x^i}[\xi])(h,\tau_{x^i}\big[\xi'\big])\Big| \leq \mu \Vert h \Vert_{H^1(\R^n)}
\end{equation}
Then, there exist $\nu_0, \mu_0>0$ (independent of $h$) such that that if $\nu \in (0, \nu_0)$ and $\mu \in [0, \mu_0)$,  then 
\begin{equation}\label{eq stability Mbubble}
	\int_{\R^n} 2 |\nabla h|^2 +f'(\eta) h^2 \geq \frac{1}{C} \Vert h \Vert_{H^1(\R^n)}^2.
\end{equation}
\end{lemma}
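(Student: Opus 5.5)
We argue by contradiction combined with a localization (partition of unity) argument, reducing to the single-bubble coercivity \eqref{eq nondegJ} of Proposition \ref{thm nondegneracy}. Note that the quadratic form in \eqref{eq stability Mbubble} carries the factor $2$ on the gradient, which only helps: it suffices to prove $\int |\nabla h|^2 + f'(\eta)h^2 \geq \frac1C\Vert h\Vert_{H^1}^2$.

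Suppose the claim fails. Then there are sequences $\nu_k,\mu_k\to 0$, $M$-bubbles $\eta_k=\sum_i\alpha_i^k\tau_{x^i_k}[\xi]$ satisfying \eqref{eq nu weak interation} with $\nu_k$, and $h_k$ with $\Vert h_k\Vert_{H^1}=1$ satisfying \eqref{eqn a. orthogonality h1}--\eqref{eqn a. orthogonality h2} with $\mu_k$, such that $\int |\nabla h_k|^2+f'(\eta_k)h_k^2\le 1/k$. Since $\nu_k\to0$, we have $\alpha_i^k\to1$ and, by Lemma \ref{lemma pairwise inter}, $|x^i_k-x^j_k|\to\infty$.

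Choose $R_k\to\infty$ with $R_k\le \frac14\min_{i\ne j}|x^i_k-x^j_k|$, together with a partition of unity $\chi_i^k=\tau_{x^i_k}[\chi(\cdot/R_k)]$, $i=1,\dots,M$, and $\chi_0^k=(1-\sum_i(\chi_i^k)^2)^{1/2}$, all having gradients of order $R_k^{-1}$. By the IMS localization formula,
\[
\int|\nabla h|^2+f'(\eta)h^2=\sum_{i=0}^M\Big(\int|\nabla(\chi_ih)|^2+f'(\eta)(\chi_ih)^2\Big)-\sum_i\int|\nabla\chi_i|^2h^2 .
\]
The last sum is $O(R_k^{-2})$. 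We treat the pieces as follows.

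\textbf{The piece $\chi_0h$.} On the support of $\chi_0$ we have $\eta_k\to0$ uniformly, so $f'(\eta_k)\ge f'(0)/2>0$ there. Hence this piece is $\ge c\Vert\chi_0h\Vert_{H^1}^2$.

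\textbf{The pieces $\chi_ih$, $i\ge1$.} On $\mathrm{supp}\,\chi_i$ we have $f'(\eta_k)=f'(\tau_{x^i_k}\xi)+o(1)$ uniformly, using $\alpha^k_i\to1$, the decay of the other bubbles, and $f\in C^{1,\beta}$. Translate to the origin and set $g_i=\tau_{-x^i_k}(\chi_ih)$. We then check the orthogonality conditions \eqref{eq orthoJ} up to $o(1)\Vert g_i\Vert$:
\[
\int g_i\partial_j\xi=\int h\,\partial_j\tau_{x^i}\xi+O\Big(\Vert\xi'\Vert_{L^2(B_{R_k}^c)}\Big),
\]
and the first term is $\le\mu_k$ by \eqref{eqn a. orthogonality h1}. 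Similarly $Q(g_i,\xi')$ differs from $D^2J(\tau_{x^i}\xi)(h,\tau_{x^i}\xi')$ by tail terms of order $\Vert\xi'\Vert_{H^1(B_{R_k}^c)}+R_k^{-1}$, which tend to $0$, and that main term is controlled by \eqref{eqn a. orthogonality h2}.

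Next, project $g_i$ onto the $Q$-orthogonal complement. Decompose $g_i=\tilde g_i+a\,\xi'+\sum_jb_j\partial_j\xi$, choosing the coefficients so that $\tilde g_i$ satisfies \eqref{eq orthoJ} exactly. This is possible because the relevant Gram-type matrix is invertible: $Q(\xi',\xi')<0$, $\int\xi'\partial_j\xi=0$ by symmetry, $Q(\partial_j\xi,\cdot)=0$, and the $\partial_j\xi$ are linearly independent. The coefficients are then $o(1)\Vert g_i\Vert$, and applying \eqref{eq nondegJ} to $\tilde g_i$ gives $Q(g_i,g_i)\ge\frac1C\Vert g_i\Vert_{H^1}^2-o(1)$.

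\textbf{Conclusion.} Summing, and using $\sum_i\Vert\chi_ih\Vert_{H^1}^2\ge\Vert h\Vert_{H^1}^2-O(R_k^{-2})$, we obtain $1/k\ge c-o(1)$. This is a contradiction.

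The main obstacle is the transfer of the almost-orthogonality conditions to the localized pieces. In particular, one must verify that the cross terms in $Q(g_i,\xi')$ coming from $\nabla\chi_i$ and from the region outside $B_{R_k}(x^i)$ are $o(1)$ uniformly in $h$ with $\Vert h\Vert_{H^1}=1$. This uses the exponential decay \eqref{eq tail zeta}--\eqref{eq tail zeta'} of $\xi'$ and $\xi''$ from Proposition \ref{prop radialuniqueness}.
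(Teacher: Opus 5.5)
Your argument is correct, and its skeleton matches the paper's. In both, you localize $h$ near each center, use $f'(\eta)\ge f'(0)/2$ on the far region, transfer the almost-orthogonality to the localized pieces through the exponential tails of $\xi',\xi''$, and conclude with the single-bubble coercivity \eqref{eq nondegJ}. The implementation differs in two ways that are worth noting.

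\emph{The decomposition.} The paper uses a linear partition $h=\sum_{i\le M}\phi_ih+\psi h$ with $|\nabla\phi|\le\gamma$. It therefore has to bound the cross terms $B(h_i,h_{M+1})\ge -C\gamma$ separately. Its Step 2 also uses $\sum_i\Vert h_i\Vert_{L^2}^2=\Vert h\Vert_{L^2}^2$, which for a linear partition holds only up to a factor. Your IMS partition with $\sum_i\chi_i^2=1$ makes that identity exact and removes the cross terms up to $\int\sum_i|\nabla\chi_i|^2h^2=O(R_k^{-2})$.

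\emph{The treatment of the orthogonality defects.} Because the paper argues directly, the defects of $h_i$ are of size $C\gamma\Vert h\Vert_{H^1}$ rather than $C\gamma\Vert h_i\Vert_{H^1}$. This is why it needs a preliminary global projection onto exact orthogonality and the threshold $\theta$: small pieces are absorbed by the crude bound $B(h_i,h_i)\ge -C_0^{-1}\Vert h_i\Vert^2$, and large ones get coercivity once $\gamma\ll\theta$. Your contradiction framing lets all these defects be absolute $o(1)$ errors. Your local projection then gives $Q(g_i,g_i)\ge\frac1C\Vert g_i\Vert_{H^1}^2-o(1)$ with no thresholding.

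One small correction: the defects you actually establish are $o(1)\Vert h\Vert_{H^1}=o(1)$, not $o(1)\Vert g_i\Vert$ as you write. Likewise, the projection coefficients are $o(1)$, not $o(1)\Vert g_i\Vert$. Since you only use the absolute bound, nothing breaks.

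The trade-off is that compactness gives no explicit $\nu_0,\mu_0,C$, whereas the paper's direct argument in principle tracks them through $\gamma$ and $\theta$. For Corollary \ref{cor stabilitydich} only their existence matters.
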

\begin{proof}

\medskip

\noindent {\it Preparations:} Let us start noticing that by an elementary projection argument it suffices to prove the case in which \eqref{eqn a. orthogonality h1} and \eqref{eqn a. orthogonality h2} holds with equality, i.e., 
\begin{equation}\label{eqn orthogonality h1}
	\int_{\R^n} h\partial_k\tau_{x^i}[\xi]=0
\end{equation}
and 
\begin{equation}\label{eqn orthogonality h2}
	D^2J(\tau_{x^i}[\xi])(h,\tau_{x^i}\big[\xi'\big])=0
\end{equation}
for $i=1,\cdots, M$ and $k=1,\cdots, n$. Let us also introduce the notation
\begin{equation}\label{eq bilinear M}
	B(v,w):=	\int_{\R^n} 2 \nabla v\cdot \nabla w +f'(\eta) v w.
\end{equation}

As in the proof of Lemma \ref{lemma weakinter}, given $\gamma>0$ to be determined, consider a radial bump function  $0\leq \phi  \leq 1$ such that  $\phi=\phi_{\gamma} \in C_c^\infty(\R^n)$ with $\phi \equiv 1$ in $B_{r_1}$ and with $|\nabla \phi| \leq \gamma$ and satisfying \eqref{eq bound bump} and  \eqref{eq L1 bound bump}. Set $\phi_i=\tau_{x^i}[\phi]$ and let us decompose $h = \sum_{i =1}^{M+1} h_i$ with $h_i =\phi_i h$ for $i=1, \cdots , M$ and $h_{M+1}= h\psi$, with $\psi=1-\sum_{i=1}^M \phi_i$. Upon shrinking the value of $\nu$ in \eqref{eq nu weak interation}, we can take the supports of $\phi_i$ disjoint which implies, of course, that the supports of the $h_i$'s are also disjoint for $i=1,\cdots, M$. Thanks to this observation, we have that 
\begin{equation}\label{eq decomp bilinear}
	B(h,h)= \sum_{i=1}^{M} B(h_i,h_i)+ 2\sum_{i=1}^{M}B(h_{i},h_{M+1})+B(h_{M+1},h_{M+1}).
\end{equation}

\medskip





\noindent {\it Step 1:} We bound $B(h_{M+1},h_{M+1})$ from below.\\

By homogeneity of \eqref{eq stability Mbubble} we may assume $\Vert  h\Vert_{W^{1,2}(\R^n)}=1$. Focusing on the last term of the right hand side of \eqref{eq decomp bilinear}, we can exploit \eqref{eq bound bump} and that $f'(s) >0$ for $s \in [0, \delta_0]$ to guarantee that $f'(\xi)\geq \frac{1}{C}$ in the support of $h_{M+1}$ and therefore
\begin{equation}\label{eq hM+1}
 B(h_{M+1},h_{M+1})\geq \frac{1}{C} \Vert h_{M+1}\Vert_{W^{1,2}(\R^n)}^2.
\end{equation}

\medskip

\noindent {\it Step 2:} We show that \begin{equation}\label{eq quadlb}
    \sum_{i=1}^{M+1}B(h_{i},h_{i})\geq \frac{1}{C} \sum_{i=1}^{M+1} \Vert h_i\Vert_{L^2(\R^n)}^2 
\end{equation}

\medskip

Let us notice first that trivially for $i=1,\cdots, M$
\begin{equation}\label{eq initial trivial bound}
	B(h_i,h_i) \geq -\frac{1}{C_0}\Vert h_i\Vert_{W^{1,2}(\R^n)}^2.
\end{equation} 
Additionally, let us notice that in order to prove \eqref{eq quadlb}, it suffices to show that for certain constant $\theta>0$, if, then
\begin{equation}\label{eq target bound}
\mbox{if } \quad \Vert h_i\Vert_{W^{1,2}(\R^n)}^2\geq \theta, \quad \mbox{then} \quad	B(h_i,h_i) \geq \frac{1}{C} \Vert h_i\Vert_{W^{1,2}(\R^n)}^2.
\end{equation}
Indeed, if \eqref{eq target bound} holds and we take the set of indices $J \subset \{1,\cdots, M+1\}$ (possibly empty) for which $\Vert h_j\Vert_{L^2(\R^n)}^2 \leq \theta $ is arbitrarily small if $j\in J$, then we would have from \eqref{eq initial trivial bound} and  \eqref{eq target bound} that 
\begin{eqnarray}\notag
	 \sum_{i=1}^{M+1}B(h_{i},h_{i}) &=& \sum_{ j\in J} B(h_j,h_j)+\sum_{ j \in \{1,\cdots, M+1\}\setminus J } B(h_j,h_j)\\ \label{eq intermediate red}
	&\geq& \frac{1}{C}\Big( -\frac{C}{C_0} \sum_{ j\in J}  \Vert h_j\Vert_{W^{1,2}(\R^n)}^2 + \sum_{ j \in \{1,\cdots, M+1\}\setminus J } \Vert h_j\Vert_{W^{1,2}(\R^n)}^2 \Big).
\end{eqnarray}

On the other hand, since $\sum_{i=1}^{M+1} \Vert h_i\Vert_{L^2(\R^n)}^2 = \Vert h\Vert_{L^2(\R^n)}^2=1$, then
\begin{equation*}
  -\frac{C}{C_0} \sum_{ j\in J}  \Vert h_j\Vert_{L^2(\R^n)}^2 + \sum_{ j \in \{1,\cdots, M+1\}\setminus J } \Vert h_j\Vert_{L^2(\R^n)}^2 \geq -\frac{C J \theta }{C_0} +1-\theta J\geq \frac{1}{2},
\end{equation*}
if we pick $\theta$ small enough in terms of $C, C_0$ and $M$. This last inequality combined with \eqref{eq intermediate red} yields \eqref{eq quadlb}.\\

We are left to show \eqref{eq target bound}. Let $i \in \{1,\cdots, M\}$ such that
\begin{equation}\label{eq lower bound hi}
	\Vert h_i\Vert_{W^{1,2}(\R^n)}^2\geq \theta
\end{equation} 
 holds. Let us exploit again \eqref{eq bound bump} together with the $\beta$-H\"older regularity of $f'$ to deduce
\begin{equation}\label{eq potential bound}
 |f'(\eta)-f'(\tau_{x^i}[\xi])| \leq C\gamma^\beta, \qquad \mbox{in supp ({$\phi_i$})}
\end{equation}

Therefore,
\begin{equation}\label{eq second var hi}
	B(h_i,h_i) \geq \int_{\R^n} 2 |\nabla h_i|^2+f'(\tau_{x^i}[\xi]) h^2 -C\gamma^\alpha \int_{\R^n} h_i^2.
\end{equation}

Now, turning to the orthogonality conditions, notice that \eqref{eqn orthogonality h2}, \eqref{eq L1 bound bump}, and \eqref{eq lower bound hi} combined imply
\begin{equation}\label{eq almost ort hi der}
	\Big|\int_{\R^n} h_i \partial_k\tau_{x^i}[\xi]\Big|=\Big|\int_{\R^n} -\phi_i h  \partial_k\tau_{x^i}[\xi]\Big|=	\Big|\int_{\R^n} (1-\phi_i)h \partial_k\tau_{x^i}[\xi]\Big|  \leq	C\gamma \Vert h_i\Vert_{W^{1,2}(\R^n)},
\end{equation}
for $k=1,\cdots, n$. Similarly, we can exploit \eqref{eqn orthogonality h2} and combine it with \eqref{eq L1 bound bump} and \eqref{eq lower bound hi} to deduce
\begin{eqnarray}\label{eq almost ort hi}
 	\Big|D^2J(\tau_{x^i}[\xi])(h_i,\tau_{x^i}\big[\xi'\big])\Big| = \Big|D^2J(\tau_{x^i}[\xi])(h (1-\phi_i),\tau_{x^i}\big[\xi'\big])\Big|\leq C\gamma \Vert h_i\Vert_{W^{1,2}(\R^n)}.
\end{eqnarray}

 Hence, thanks to the stability inequality for one bubble combined with \eqref{eq second var hi}, we deduce the existence of $\gamma>0$ small enough (and therefore $\nu_0$) small enough, such that
$$	B(h_i,h_i) \geq\frac{1}{C} \Vert h_i\Vert_{W^{1,2}(\R^n)}^2,$$
which implies \eqref{eq quadlb}.\\

\medskip

\noindent{\it Step 4:} Conclusion.\\

Since $\Vert h\Vert_{W^{1,2}(\R^n)}=1$, we have from the previous step and \eqref{eq decomp bilinear} that
\begin{eqnarray}\label{eq decomp bilinear2}
	B(h,h)\geq \frac{1}{C}+ 2\sum_{i=1}^{M}B(h_{i},h_{M+1}).
\end{eqnarray}
On the other hand, we notice that since $|\nabla \phi|\leq \gamma$, $\phi_i$ and $\phi_j$ are disjoint for $i\neq j$, and that $f'(\eta) >0$ in the support of $\psi$, we have
\begin{eqnarray}\notag
    B(h_i, h_{M+1}) &\geq& \int_{\R^n} (1-\phi_i) h\nabla h\cdot \nabla \phi_i -  \phi_i h\nabla h\cdot \nabla  \phi_i - h^2|\nabla \phi_i|^2 \\\label{eq lastboundmb}
    &\geq& - \gamma C,
\end{eqnarray}
where we also used that $\Vert h\Vert_{W^{1,2}(\R^n)}=1$. Finally, the result follows from combining \eqref{eq decomp bilinear2} with \eqref{eq lastboundmb}.
\end{proof}

We finish this section by proving the stability dichotomy.
\begin{corollary}\label{cor stabilitydich}
   Let $u$ be as in Theorem \ref{thm1} satisfying \eqref{eq limbubbles} and let $\eta(t) = \sum_{i=1}^M \alpha_i(t)\tau_{x^i(t)}\big[\xi\big]$ be a best matching $M$-bubble. Set $\rho(t)=u-\eta(t)$. Then there exists $T>0$ and $\delta_0>0$ such that if $t\geq T$ then either 
    \begin{equation}\label{eq condstab1}
        \Vert \partial_t u\Vert_{L^2(\R^n)} \geq \delta_0 \Vert \rho(t)\Vert_{H^1(\R^n)}
    \end{equation} 
    or
	\begin{equation}\label{eq stability Mbubble1}
		\int_{\R^n}   |\nabla \rho(t)|^2 + f'(\eta(t)) \rho(t)^2 \geq \frac{1}{C}\Vert \rho(t)\Vert_{H^1(\R^n)}^2.
	\end{equation}
\end{corollary}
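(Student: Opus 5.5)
The plan is to apply Lemma \ref{lemma multibybble} with $h=\rho(t)$, so the task reduces to checking its two hypotheses for $t$ large, and then to handle the case in which the almost-orthogonality fails by showing that \eqref{eq condstab1} must hold.

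\textbf{Step 1: the weak-interaction hypothesis.} By \eqref{eq smallGammanu} we can choose $T$ so large that \eqref{eq nu weak interation} holds with $\nu<\nu_0$, where $\nu_0$ is the constant from Lemma \ref{lemma multibybble}. This condition holds unconditionally.

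\textbf{Step 2: the second orthogonality condition.} Condition \eqref{eqn a. orthogonality h2} holds with $\mu=0$, since by construction of the coefficients $\alpha_i(t)$ we have the exact identity \eqref{eq orthpxi}, that is, $D^2J(\tau_{x^i}[\xi])(\rho,\tau_{x^i}[\xi'])=0$ for every $i$.

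\textbf{Step 3: the first orthogonality condition.} For \eqref{eqn a. orthogonality h1} we use the almost-orthogonality estimate \eqref{eqn orthogonality 0}, which comes from minimality in \eqref{eq bestmatching}. This gives
\[
\sum_{i,k}\Big|\int_{\R^n}\rho\,\partial_k\tau_{x^i}[\xi]\Big|\le C\sum_{l=1}^M|1-\alpha_l|+C\sum_{l<m}\int_{\R^n}\tau_{x^l}[\xi]\tau_{x^m}[\xi].
\]
Lemma \ref{lemma weakinter} then controls the right-hand side:
\[
\sum_{i,k}\Big|\int_{\R^n}\rho\,\partial_k\tau_{x^i}[\xi]\Big|\le C\delta\Vert\rho\Vert_{L^2}+C\Vert\partial_t u\Vert_{L^2}.
\]
Here $\delta>0$ is free, at the cost of enlarging $T=T(\delta)$. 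We fix $\delta$ so that $C\delta\le\mu_0/2$, and set $\delta_0:=\mu_0/(2C)$.

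\textbf{Step 4: the dichotomy.} If $\Vert\partial_t u\Vert_{L^2}\ge\delta_0\Vert\rho\Vert_{H^1}$, then \eqref{eq condstab1} holds and we are done. Otherwise $C\Vert\partial_tu\Vert_{L^2}<\frac{\mu_0}{2}\Vert\rho\Vert_{H^1}$. Combined with Step 3, this gives \eqref{eqn a. orthogonality h1} with some $\mu<\mu_0$. Lemma \ref{lemma multibybble} now applies to $h=\rho$ and yields
\[
\int_{\R^n}2|\nabla\rho|^2+f'(\eta)\rho^2\ge\frac1C\Vert\rho\Vert_{H^1}^2.
\]

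\textbf{Main obstacle: the factor mismatch.} Lemma \ref{lemma multibybble} has the factor $2$ in front of $|\nabla\rho|^2$, while \eqref{eq stability Mbubble1} has coefficient $1$. The plan is to redo the lemma with the form $\int|\nabla h|^2+f'(\eta)h^2$; its proof uses only the one-bubble coercivity from Proposition \ref{thm nondegneracy}, which is stated for $Q$ with coefficient $1$, together with localization. So the constant in front of the gradient is immaterial, and the proof goes through verbatim.

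One further subtlety must be checked: Lemma \ref{lemma weakinter} is used in Step 3 with $\delta$ fixed in terms of $\mu_0$ only. This creates no circularity, because $\mu_0$ and $\nu_0$ depend only on $\xi$, $f$, $n$ and $M$.
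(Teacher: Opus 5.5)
Your proposal is correct and follows essentially the same route as the paper: you use the exact orthogonality \eqref{eq orthpxi}, upgrade the almost-orthogonality \eqref{eqn orthogonality 0} via Lemma \ref{lemma weakinter}, and invoke Lemma \ref{lemma multibybble} whenever \eqref{eq condstab1} fails. Your remark about the factor $2$ in Lemma \ref{lemma multibybble} is a legitimate point that the paper passes over silently. Your fix, running that lemma with $\int|\nabla h|^2+f'(\eta)h^2$, which is exactly what the one-bubble coercivity of Proposition \ref{thm nondegneracy} provides, is the right one.
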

\begin{proof}
Assume that \eqref{eq condstab1} does not hold, namely, 
\begin{equation}\label{eq small ut}
       \Vert \partial_t u\Vert_{L^2(\R^n)} \leq \delta_0 \Vert \rho(t)\Vert_{H^1(\R^n)},
\end{equation}
with $\delta_0$ to be determined. In virtue of Lemma \ref{lemma weakinter}, given any $\delta>0$, we can find $T(\delta)>0$ such that if $t\geq T(\delta)$, the almost orthogonality condition \eqref{eqn orthogonality 0} implies
\begin{equation*}
		\Bigg|\int_{\R^n} \rho(t) \partial_j \tau_{x^i}[\xi]\Bigg|\leq \delta \Vert \rho(t)\Vert_{H^1(\R^n)} +C \Vert \partial_t u\Vert_{L^2(\R^n)},
	\end{equation*}
which, in turn, thanks to \eqref{eq small ut} implies
\begin{equation}\label{eqn orthogonalitylp}
		\Bigg|\int_{\R^n} \rho(t) \partial_j \tau_{x^i}[\xi]\Bigg|\leq (\delta+\delta_0 C) \Vert \rho(t)\Vert_{H^1(\R^n)}.
	\end{equation}
 
We may now combine \eqref{eqn orthogonalitylp} with the orthogonality conditions \eqref{eq orthpxi}.  Choosing $T$ sufficiently large so that $\delta + C\delta_0$ is sufficiently small and such that \eqref{eq nu weak interation} holds for $\nu$ small enough, we can apply Lemma~\ref{lemma multibybble}.  This yields \eqref{eq stability Mbubble1} for all $t\geq T$.
\end{proof}

\section{Interaction estimates and related lemmata}\label{sec interest}

The first lemma below exploits \eqref{eq concat0} and is key to show that tail interaction is energetically unfavorable in Lemma \ref{lemma difMbubble}. 

\begin{lemma}\label{lemma tailconcavity}
  Let $h:\R \to \R$ with $h(0)=0$ and let $H(t) = \int_0^t h(s)ds$. Assume that $h$ is concave in $(0,\delta)$ for some $\delta>0$. Then, if  $\{x_i\}_{i=1}^M$ is a collection of non-negative numbers satisfying $ \sum_{i=1}^M x_i <\delta$, then
 \begin{equation}\label{eq convexity at 0}
 	H\Big(\sum_{i=1}^M x_i\Big)- \sum_{i=1}^M H(x_i)-	\sum_{1\leq i < j \leq M} h(x_i) x_j \leq 0.
 \end{equation}
\end{lemma}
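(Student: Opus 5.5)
The plan is to reduce to a two-point inequality, which follows from subadditivity of $h$ near $0$, and then telescope over partial sums. Throughout, $h$ is taken to be continuous on $[0,\delta)$; this is the situation in which the lemma is applied, namely $h=f$ with $f\in C^{1,\beta}$. With $h(0)=0$, concavity on $(0,\delta)$ then extends to $[0,\delta)$.

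\textbf{Step 1 (subadditivity).} For $a,s\ge 0$ with $a+s<\delta$, we have $h(a+s)\le h(a)+h(s)$. If $a+s=0$ there is nothing to prove. Otherwise concavity on $[0,\delta)$ together with $h(0)=0$ gives
\begin{equation*}
h(a)\ge \tfrac{a}{a+s}\,h(a+s)+\tfrac{s}{a+s}\,h(0)=\tfrac{a}{a+s}\,h(a+s),
\end{equation*}
and similarly $h(s)\ge \tfrac{s}{a+s}\,h(a+s)$. Adding the two inequalities gives the claim. By induction on the number of summands, $h\big(\sum_{i<k}x_i\big)\le\sum_{i<k}h(x_i)$ whenever the sum lies in $[0,\delta)$.

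\textbf{Step 2 (two points).} For $a,b\ge0$ with $a+b<\delta$, we will show
\begin{equation*}
H(a+b)-H(a)-H(b)-h(a)b=\int_0^b\big[h(a+s)-h(a)-h(s)\big]\,ds\le 0 .
\end{equation*}
The identity is immediate from $H(a+b)-H(a)=\int_0^b h(a+s)\,ds$ and $H(b)=\int_0^b h(s)\,ds$. The integrand is nonpositive by Step 1, so the integral is nonpositive.

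\textbf{Step 3 (telescoping).} Set $S_k=x_1+\dots+x_k$, so that $S_k\le S_M<\delta$ for every $k$. Then
\begin{equation*}
H(S_M)-\sum_{i=1}^M H(x_i)=\sum_{k=2}^M\big[H(S_{k-1}+x_k)-H(S_{k-1})-H(x_k)\big]\le\sum_{k=2}^M h(S_{k-1})\,x_k ,
\end{equation*}
where the inequality applies Step 2 to each bracket with $a=S_{k-1}$ and $b=x_k$. Since $x_k\ge0$, Step 1 gives $h(S_{k-1})\,x_k\le\sum_{i<k}h(x_i)\,x_k$. Summing over $k$ yields exactly $\sum_{1\le i<j\le M}h(x_i)x_j$, which proves \eqref{eq convexity at 0}.

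The only delicate point is the behaviour at the endpoint $0$. Subadditivity requires concavity on $[0,\delta)$, not merely on the open interval $(0,\delta)$. This holds once $h$ is continuous at $0$ with $h(0)=0$, which is why the continuity assumption stated at the outset is needed.
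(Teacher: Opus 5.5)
Your proof is correct. It rests on the same fact as the paper's: concavity together with $h(0)=0$ gives $h(a+s)-h(s)\le h(a)$. The difference is in how you organize the $M$-point inequality.

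\textbf{The paper's route.} The paper proves $M=2$ by writing $H(x_1+x_2)-H(x_1)-H(x_2)-h(x_1)x_2=\int_0^1\!\int_0^1[h'(x_1+srx_2)-h'(srx_2)]\,r x_2^2\,ds\,dr$ and using that $h'$ is nonincreasing. It then inducts by splitting off $x_1$ against the \emph{grouped remainder} $S'=x_2+\dots+x_M$. The left-hand side becomes $[H(x_1+S')-H(x_1)-H(S')-h(x_1)S']$ plus the same expression for $x_2,\dots,x_M$. The base case and the inductive hypothesis then close the argument at once, because the cross term $h(x_1)S'$ is already in the required form.

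\textbf{Your route.} You obtain the two-point bound from subadditivity, i.e.\ the chord inequality through the origin, so no derivatives are needed. You then telescope along partial sums with the \emph{accumulated} sum $S_{k-1}$ in the first slot. This produces the cross terms $h(S_{k-1})x_k$, so you must invoke subadditivity a second time to reach $\sum_{i<j}h(x_i)x_j$. The paper's ordering avoids that extra step. In exchange, your argument is derivative-free and passes through the intermediate bound $H(S_M)-\sum_i H(x_i)\le\sum_{k\ge 2}h(S_{k-1})x_k$, which is sharper than the stated inequality.

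\textbf{The endpoint.} Your remark about $0$ is a genuine correction to the statement, not a technicality. Take $h(0)=0$ and $h\equiv -1$ on $(0,\delta)$, which is concave on the open interval. Then $M=2$ with $x_1=x_2=\delta/4$ gives a left-hand side of $\delta/4>0$. So the lemma as literally written needs concavity on $[0,\delta)$, for instance continuity of $h$ at $0$. The paper's proof uses this tacitly as well, when it applies the fundamental theorem of calculus to $h(rx_2)-h(0)$. In the application $h=f\in C^{1,\beta}$, so nothing is lost.
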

\begin{proof}
	Reasoning by induction, let us first assume that $M=2$. Since $h$ is concave and thus Lipschitz, we can apply the fundamental theorem of calculus twice to $H(x_1+x_2)-H(x_1)$ and to $H(x_2) $ and exploiting $H(0)=h(0)=0$, we have
	\begin{eqnarray}\notag
		H (x_1+x_2)-  H(x_1)-H(x_2)- h(x_1) x_2 &=& \int_0^1 [h(x_1+rx_2)-h(x_1)-h(rx_2)]x_2\, dr \\\notag
		&=& \int_0^1\int_0^1 [h'(x_1+rsx_2)-h'(srx_2)]x_2^2 \,t \, ds dr.
	\end{eqnarray}
By concavity, we have $h'(x_1+rsx_2) \leq h'(srx_2)$ a.e. $s, r \in (0,1)$, showing the base case.

For the inductive step, we easily conclude by rewriting the left hand side in \eqref{eq convexity at 0} as
\begin{eqnarray*}
 	H \Big(\sum_{i=1}^M x_i\Big)- \sum_{i=1}^M H(x_i)-	\sum_{1\leq i < j \leq M} h(x_i) x_j \\
 	= 	H \Big(\sum_{i=1}^{M} x_i\Big)-	H (x_1)- 	H \Big(\sum_{i=2}^{M} x_i\Big)-	 h(x_1)\sum_{2\leq j \leq M\sum_{1\leq i < j \leq M-1}} x_j \\
 	+ H \Big(\sum_{i+2}^{M} x_i\Big)-\sum_{i=1}^M H(x_i) -\sum_{2\leq i < j \leq M}h(x_i)h(x_j),
\end{eqnarray*}
and by noticing that the second line is non-positive by the base case, whilst the third line is non-positive in virtue of the inductive hypothesis.	
\end{proof}

\begin{lemma}\label{lemma pairwise inter}[Decay and pairwise interaction of bubbles]
Let $n\geq 2$, $\beta\in (0,1]$, and let $f \in C^{1,\beta}(\R)$ satisfying $f(0)=0$, $f'(0)>0$. Let $\xi$ be a positive solution to
\begin{equation}\label{eq gsap}
\begin{cases}
\Delta \xi = f(\xi) \quad  \R^n, \\
 \xi(x) \to 0, \quad  \text{as $|x| \to \infty$}.
\end{cases}
\end{equation} 
Then, $\xi$ is strictly radially decreasing with respect to some point in $\R^n$. Furthermore, if we assume that $\xi$ is radial with respect to the origin, the following properties hold.
\begin{itemize}	
	\item Let us consider the pairwise interaction function 
	$$g(x) := \int_{\R^n} \tau_{x}[\xi]\xi.$$
	There exists $R_1>0$ such that for any $r_0>0$, there exists $C$ depending on $r_0$, $f$ and $n$ such that
	\begin{equation}\label{eq interaction function}
\frac{1}{C}\xi(x)\leq \int_{B_{r_0}} \tau_{x}\xi(y) \xi(y)dy \leq g(|x|) \leq C \xi(x)\quad \mbox{ for $|x|\geq \max\{2,r_0+R_1\}$.}
	\end{equation}

\item 
Given $\delta>0$, there exists $R(\delta)$ such that if $r>R(\delta)$ 
\begin{equation}\label{eq small inter}
	\int_{ B_r^c \cup B_r(x)^c} \tau_{x}[\xi] \xi +\tau_{x}[\xi] |\xi'| + \tau_{x}[|\xi'|] \xi \leq \delta	\int_{\R^n} \tau_{x}[\xi] \xi .
\end{equation}

\end{itemize}
\end{lemma}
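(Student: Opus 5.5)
The radial symmetry and strict monotonicity are already contained in Proposition \ref{prop radialuniqueness}, via \cite{gidas1981symmetry}. So I will assume $\xi$ is radial about the origin and take from that proposition the two-sided tail bound \eqref{eq tail zeta}, $\xi(x)\simeq e^{-m|x|}|x|^{-\frac{n-1}{2}}$ for large $|x|$ (writing the decay rate as $m=\sqrt{f'(0)}$), and the derivative comparability \eqref{eq tail zeta'}. Everything then reduces to elementary estimates of convolution-type integrals of exponentially decaying radial profiles.

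For \eqref{eq interaction function}, the lower bound comes from restricting to $B_{r_0}$. For $y\in B_{r_0}$ we have $\xi(y)\geq \xi(r_0)>0$. Also $|x-y|\leq |x|+r_0$, so \eqref{eq tail zeta} together with monotonicity gives $\xi(x-y)\geq \xi(|x|+r_0)\geq C(r_0)^{-1}\xi(x)$ once $|x|$ is large. Integrating over $B_{r_0}$ yields $\int_{B_{r_0}}\tau_x\xi\,\xi\geq C^{-1}\xi(x)$. The middle inequality is trivial by positivity.

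For the upper bound I split $\R^n$ into $B_{|x|/2}$, $B_{|x|/2}(x)$ and the rest. On $B_{|x|/2}$ I use the triangle inequality in the form $|x-y|\geq |x|-|y|$, so that
\[
\xi(x-y)\lesssim e^{-m|x|}e^{m|y|}|x|^{-\frac{n-1}{2}}\lesssim \xi(x)\,e^{m|y|}.
\]
This leaves $\int \xi(y)e^{m|y|}\,dy$, which diverges only logarithmically-free if the polynomial factor helps. Here is the genuine subtlety: $\xi(y)e^{m|y|}\sim |y|^{-\frac{n-1}{2}}$ is not integrable. I therefore expect the sharp statement to need a finer geometric argument. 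Writing $|x-y|\geq |x|-y\cdot\hat x+\frac{|y_\perp|^2}{4|x|}$ for $|y|\leq|x|/2$ gives Gaussian decay in the transverse directions. The integral then becomes one-dimensional in $s=y\cdot\hat x$, with integrand $\sim |x|^{\frac{n-1}{2}}(1+s)^{-\frac{n-1}{2}}$ times the transverse Gaussian volume. Tracking these powers is the main obstacle; the classical outcome is $g(x)\simeq |x|^{a}\xi(x)$ for some $a\geq 0$, and I will check whether the exponent closes to $a=0$ or whether the claim must absorb a polynomial factor. The region $B_{|x|/2}(x)$ is symmetric to the first, and on the remainder both factors are $\lesssim e^{-m|x|/2}$ with a compensating integrable tail, giving $\lesssim e^{-(m+\epsilon)|x|}$ there.

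For \eqref{eq small inter}, the derivative terms are handled by \eqref{eq tail zeta'}: away from a fixed ball, $|\xi'|\lesssim\xi$, and near the origin $|\xi'|$ is bounded. Both derivative integrands are therefore bounded by the same decomposition as $g$. On the region outside $B_r$ and $B_r(x)$ the same integral, restricted to points far from both centers, gains a factor $e^{-cr}$ (or a decaying power of $r$) relative to the core contribution computed above. Comparing with the lower bound of \eqref{eq interaction function} then gives the ratio $\leq\delta$ for $r\geq R(\delta)$.
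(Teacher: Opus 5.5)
Your lower bound in \eqref{eq interaction function} is correct, and more direct than the paper's subharmonicity argument. The gap is the upper bound $g(x)\le C\xi(x)$, which you leave open (you do not decide whether $a=0$). This gap cannot be closed, because the bound is false: $a=1$. Finishing your own tube computation shows it. In your normalization $\xi(y)\simeq e^{-m|y|}\max\{|y|,1\}^{-\frac{n-1}{2}}$, put $x=Le_n$, $y=(y',s)$, and restrict to the paraboloidal tube $T=\{1\le s\le L-1,\ |y'|^2\le \min\{s,L-s\}\}$. On $T$ one has $|y|\le s+\frac{|y'|^2}{2s}\le s+\frac12$ and $|x-y|\le L-s+\frac12$. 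Hence for every $s\in[1,L-1]$
\[
\int_{\R^{n-1}}\xi(y)\,\xi(x-y)\,dy'\;\gtrsim\; e^{-mL}\bigl(s(L-s)\bigr)^{-\frac{n-1}{2}}\min\{s,L-s\}^{\frac{n-1}{2}}\;\ge\; e^{-mL}L^{-\frac{n-1}{2}},
\]
and integrating in $s$ gives $g(x)\gtrsim |x|\,\xi(x)$. The same bookkeeping gives the matching upper bound, so $g(x)\simeq|x|\,\xi(x)$. For that you need the correct transverse width at height $s$, which is $\simeq\sqrt{s(L-s)/L}$. For $s\ll L$ this width is set by $\xi(y)$ itself (scale $\sqrt{s}$), not by the $|x|$-scale term you kept. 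As a check, for $n=3$, $m=1$ and the model profile $\xi=e^{-|y|}/|y|$, the Fourier transform gives $\xi*\xi=2\pi e^{-|x|}=2\pi|x|\,\xi(x)$. Only the localized quantity $\int_{B_{r_0}}\tau_x\xi\,\xi$ is comparable to $\xi(x)$.

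The same computation shows that $\tau_x[\xi]\,\xi$ is spread evenly along the segment $[0,x]$ rather than concentrated near the two centers. So your sketch for \eqref{eq small inter}, a gain of $e^{-cr}$ over a ``core'' contribution, fails. For fixed $r$, the slices $s\in[r,L-r]$ of $T$ lie outside $B_r\cup B_r(x)$ and carry a fraction of $g(x)$ bounded away from zero as $|x|\to\infty$, so for small $\delta$ no $R(\delta)$ exists. Your remainder bound $e^{-(m+\epsilon)|x|}$ is also wrong near $x/2$, where $B_{|x|/2}$ and $B_{|x|/2}(x)$ touch; the contribution there is $\simeq\xi(x)$.

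The paper's proof breaks exactly at the step you flagged. It asserts that the integral over $\{1\le|y|\le|x|-1\}$ is $O(\xi(x)|x|^{-k})$ for every $k$. It reduces this to super-polynomial decay of $g_1(r)=\int_{\mathbb{S}^{n-1}}\int_0^1 e^{-rs|\tilde\theta|^2/(2m)}\,ds\,d\mathcal{H}^{n-1}(\theta)$ (paper's notation) and of $g_2$. But its $\limsup\le C\delta$ argument only gives $g_1(r)\to0$, and L'H\^opital's rule does not upgrade that. In fact the poles $\tilde\theta=0$ give $g_1(r)\simeq r^{-1}$ for $n\ge4$ (and $r^{-1}\log r$, $r^{-1/2}$ for $n=3,2$), and $g_2(r)\simeq r^{-\frac{n-1}{2}}$. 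The correct statement is $\frac1C|x|\,\xi(x)\le g(x)\le C|x|\,\xi(x)$ for large $|x|$. The later uses of \eqref{eq interaction function} and \eqref{eq small inter} would have to be revisited accordingly.
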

\begin{proof}
	
Let $r_0>0$, by the decaying properties of $\xi$, there exists $C(r_0)$ such that $\xi \geq \frac{1}{C(r_0)}\xi(0)$ in $B_{r_0}$, and let $R_1 >0$ such that $f(\xi)\geq \frac{f'(0)}{2}\xi$ in $B_{R_1}^c$, so that
	\begin{equation}\label{eq lb lapzeta}
		 \Delta \xi \geq \frac{1}{C} \xi \mbox{ in $B_r^c$},
	\end{equation}
    for $\geq R_1$. In particular, \eqref{eq lb lapzeta} implies that $\xi$ is subharmonic in $B_r^c$. Together these properties imply, combined with the mean value inequality for subharmonic functions, that for $|x| \geq r_0+R_1$
	\begin{equation}\label{eq lb interaction}
	g(|x|) \geq \frac{1}{C(r_0)}\xi(0) \int_{B_{r_0}} \tau_{x}\xi \geq \frac{1}{C(r_0)} \xi(|x|),
\end{equation}
proving the lower bound in \eqref{eq interaction function}.\\
	
	Let us proceed to estimate $g(|x|)$. Since $\xi$ is strictly radially deceasing, we have
	\begin{eqnarray}\notag
	 g(|x|) &=& \int_{\{|y| < |x|\}} \xi(y-x) \xi(y)dy + \int_{\{|y| \geq |x|\}}\xi(y-x) \xi(y)dy\\\notag
	 && \leq  \int_{\{|y| < |x|\}} \xi(y-x) \xi(y)dy+\xi(|x|)\int_{\{|y| \geq |x|\}} \tau_{x}\xi(y)dy\\\label{eq bd pairwise1}
	 && \leq \int_{\{|y| < |x|\}} \xi(y-x) \xi(y)dy+C\xi(|x|).
	\end{eqnarray}
	
	Additionally, by  \eqref{eq exp bound GNN} we have that for $|x|>4$
	\begin{eqnarray}\notag
		\int_{\{|y| < 1\}} \xi(y-x) \xi(y)dy &\leq& 	C	\int_{\{|y| < 1\}} \frac{e^{-\frac{1}{m}|x-y|}}{|x-y|^\frac{n-1}{2}}\xi(y)dy\\ \notag
		&\leq& C \frac{e^{-\frac{|x|}{m}}}{|x|^\frac{n-1}{2}}	\int_{\{|y| < 1\}} \frac{e^{\frac{|y|}{m}}\xi(y)}{|1-y/|x||^\frac{n-1}{2}}dy\\\label{eq bound ring0}
		&\leq& C \xi(x).
	\end{eqnarray}
	
	Similarly,
		\begin{eqnarray*}
		\int_{\{ |x|-1< |y|<|x|\}} \xi(y-x) \xi(y)dy\leq \xi(|x|-1)\int_{\{ |x|-1< |y|<|x|\}} \xi(y-x)dy\leq C \xi(|x|).
	\end{eqnarray*}	 
	Combining the previous two estimates with \eqref{eq bd pairwise1} yields 
	\begin{equation}\label{eq 2nd bound g}
		g(|x|)\leq C\xi(|x|)+ \int_{\{1\leq |y| \leq |x|-1\}} \xi(y-x) \xi(y)dy.
	\end{equation}	
We claim that the second term on the right hand side of \eqref{eq 2nd bound g} decays faster than $\frac{\xi(r)}{r^k}$ for any $k\in \N$, which would imply \eqref{eq interaction function}. Aiming to prove this claim, let us adopt the notation $y=(\tilde y, y_n) \in \R^{n-1}\times \R$. Since $g$ is radially symmetric we can assume that $x=|x|e_n$ (where $e_n$ is the $n$-th canonical unit vector in $\R^n$). Let us notice that for $|y|< |x|$ we have the elementary inequality 
	\begin{equation}\label{eq el ineq}
	|y-e_n |x||+|y| \geq \frac{|\tilde y|^2}{2|y|}+|x|
\end{equation}
	Indeed, if we take $a = |y-e_n |x||$ and $b=|y|-|x|$, we have that 
	\begin{eqnarray*}
		(a+b)(a-b)&=& |y-e_n|x||^2- (|y|-|x|)^2\\
		&=& (y_n-|x|)^2-y_n^2+2|y||x|-|x|^2\\		
		&=&2|x|(|y|-y_n)
	\end{eqnarray*} 
	which implies
		\begin{eqnarray}\label{eq elementary}
		|y-e_n |x||+|y|-|x| = \frac{2|x|(|y|-y_n)}{|y-e_n |x||+|x|-|y|} \geq |y|-|y_n|,
	\end{eqnarray} 
    by noticing that $|y-e_n |x||+|x|-|y|\leq 2|x|$. On the other hand, from the fundamental theorem of calculus
	\begin{equation*}
		|y|-|y_n| = \sqrt{|\tilde y|^2+|y_n|^2}-|y_n|^2 = \int_0^1 \frac{|\tilde y|^2}{2\sqrt{t|\tilde y|^2+|y_n|^2}}dt \geq \frac{|\tilde y|^2}{2 |y|}
	\end{equation*}	
	which combined with \eqref{eq elementary} yields \eqref{eq el ineq}. We now combine \eqref{eq el ineq} with \eqref{eq exp bound GNN} and use spherical coordinates $y=r\theta$ to deduce
	\begin{eqnarray}\notag
		\int_{\{1<|y| \leq |x|-1\}} \xi(y-|x|e_n) \xi(y)dy &\leq&  C\int_{\{1<|y| \leq |x|-1\}} \frac{e^{-\frac{1}{m}(||x|e_n-y|+|y|)}}{(|(|x|-|y|)|y|)^\frac{n-1}{2}} dy\\\notag
		&\leq&  	C e^{-|x|/m} \int_{\{1<|y| \leq |x|-1\}} \frac{e^{- \frac{|\tilde y|^2}{2|y|m}}}{(|(|x|-|y|)|y|)^\frac{n-1}{2}} dy\\\notag
			&\leq&  	C e^{-|x|/m} \int_{\mathbb{S}^{n-1}}\int_{1}^{|x|-1} e^{- r\frac{|\tilde \theta|^2}{2m}}\Big(\frac{r}{|x|-r}\Big)^\frac{n-1}{2} drd\mathcal{H}^{n-1}(\theta)\\\notag
		&\leq&	 |x|	C e^{-|x|/m}\int_{\mathbb{S}^{n-1}}\int_{0}^{|x|} e^{-  |x|\frac{s}{1+s}\frac{|\tilde \theta|^2}{2m}}\frac{s^\frac{n-1}{2}}{(1+s)^2} dsd\mathcal{H}^{n-1}(\theta)
	\end{eqnarray}
where in the last inequality we used the change of variables $s = \frac{r}{|x|-r}$ (equivalently $r= |x|\frac{s}{1+s}$). We can bound this last integral as follows
\begin{eqnarray*}
\int_{\mathbb{S}^{n-1}}\int_{0}^{|x|} e^{-  |x|\frac{s}{1+s}\frac{|\tilde \theta|^2}{2m}}\frac{s^\frac{n-1}{2}}{(1+s)^2} dsd\mathcal{H}^{n-1}(\theta)&\leq&  C\int_{\mathbb{S}^{n-1}}\int_{0}^{1} e^{-  |x|s\frac{|\tilde \theta|^2}{2m}} dsd\mathcal{H}^{n-1}(\theta) \\
&&+ \int_{\mathbb{S}^{n-1}} e^{-  |x|\frac{|\tilde \theta|^2}{4m}}d\mathcal{H}^{n-1}(\theta)\int_{1}^{|x|} s^\frac{n-5}{2} ds.
\end{eqnarray*}

We claim that both 
$$g_1(r) = \int_{\mathbb{S}^{n-1}}\int_{0}^{1}  e^{-  rs\frac{|\tilde \theta|^2}{2m}} dsd\mathcal{H}^{n-1}(\theta),$$
 and
 $$g_2(r)= \int_{\mathbb{S}^{n-1}}  e^{-  r\frac{|\tilde \theta|^2}{4m}}d\mathcal{H}^{n-1}(\theta),$$
  decay faster than any polynomial. We will focus on $g_1$ since the proof for $g_2$ is completely analogous. We observe first that
  \begin{equation}\label{eq limit poly}
  	\lim_{r\to \infty} \int_{\mathbb{S}^{n-1}}\int_{0}^{1} e^{-  rs\frac{|\tilde \theta|^2}{2m}} (s|\tilde \theta|^2)^k dsd\mathcal{H}^{n-1}(\theta)=0,
  \end{equation}
  for any $k\in \N\cup \{0\}$. Notice first that if \eqref{eq limit poly} holds for $k=0$, then holds for every $k$ since $(s|\tilde \theta|^2)^k\leq 1$. On the other hand, we have that for any $\delta>0$ 
   \begin{eqnarray*}
  \int_{\mathbb{S}^{n-1}}\int_{0}^{1} e^{-  rs\frac{|\tilde{\theta}|^2}{2m}}  dsd\mathcal{H}^{n-1}(\theta)&=&  \int_{\mathbb{S}^{n-1}\cap \{|\tilde \theta| \leq \delta \}}\int_{0}^{1}  e^{-  rs\frac{|\tilde \theta|^2}{2m}}  dsd\mathcal{H}^{n-1}(\theta)\\
  &&+  \int_{\mathbb{S}^{n-1}\cap \{|\tilde \theta| \geq \delta \}}\int_{0}^{\delta} e^{-  rs\frac{|\tilde \theta|^2}{2m}}  dsd\mathcal{H}^{n-1}(\theta) \\
  &&+ \int_{\mathbb{S}^{n-1}\cap \{|\tilde \theta| \geq \delta \}}\int_{\delta}^{1} e^{-  rs\frac{|\tilde \theta|^2}{2m}}  dsd\mathcal{H}^{n-1}(\theta)\\
  &\leq& C\delta + \int_{\mathbb{S}^{n-1}\cap \{|\tilde \theta| \geq \delta \}}\int_{\delta}^{1} e^{-  r\frac{\delta^3}{2m}}  dsd\mathcal{H}^{n-1}(\theta).
  \end{eqnarray*}
Thus,
\begin{equation*}
 \limsup_{r\to \infty}   \int_{\mathbb{S}^{n-1}}\int_{0}^{1} e^{-  rs\frac{|\tilde{\theta}|^2}{2m}}  dsd\mathcal{H}^{n-1}(\theta)\leq C\delta,
\end{equation*}
implying \eqref{eq limit poly} from the arbitrariness of $\delta$. Thus, L'H\^opital's rule allows to deduce that $\lim_{k\to \infty}g_1(r)r^k \to 0$ from \eqref{eq limit poly}. This finally implies that $g(|x|)\leq C\xi(x)$, i.e., the upper bound in \eqref{eq interaction function}.\\

We complete the proof by showing \eqref{eq small inter}. Thanks to \eqref{eq tail zeta'} it suffices to prove that given $\delta>0$, there exists $R(\delta)$ such that if $r>R(\delta)$ 
\begin{equation}\label{eq small inter2}
	\int_{ B_r^c \cup B_r(x)^c} \tau_{x}[\xi] \xi \leq \delta	\int_{\R^n} \tau_{x}[\xi] \xi .
\end{equation}

Let $A_r(x)=B_r^c \cup B_r(x)^c$. Analogously to the decomposition \eqref{eq bd pairwise1}, we have
	\begin{eqnarray}\label{eq complement pairwise1}
\int_{A_r(x) } \tau_{x}[\xi] \xi \leq  \int_{\{|y| < |x|\}\cap A_r} \xi(y-x) \xi(y)dy+\xi(|x|)\int_{\{|y| \geq |x|\}\cap A_r} \tau_{x}\xi(y)dy.
\end{eqnarray}
We bound the second term on the right of \eqref{eq complement pairwise1} by noticing
\begin{equation}\label{eq bound st}
	\int_{\{|y| \geq |x|\}\cap A_r} \tau_{x}\xi(y)dy\leq \int_{B_r^c} \xi,
\end{equation}
which vanishes as $r\to \infty$. On the other hand, from the previous part of the proof, we have that
\begin{equation}\label{eq fast decay}
	\int_{\{1<|y| < |x|-1\}} \xi(y-x) \xi(y)dy \leq C_k\frac{\xi(|x|)}{|x|^k},
\end{equation}
for any $k\in \N$. So, assuming $r>1$, it suffices to show that given $\delta>0$, there exists $R(\delta)$ such that if $r\geq R(\delta)$
\begin{equation}\label{eq ring bound}
	\int_{\{|x|-1< |y| \leq |x|\}\cap A_r} \tau_{x}\xi(y)\xi(y)dy\leq \delta \xi(|x|).
\end{equation}

Proceeding as in \eqref{eq bound ring0}, we have that
	\begin{eqnarray*}\notag
	\int_{\{|x|-1< |y| \leq |x|\}\cap A_r} \xi(y-x) \xi(y)dy &\leq& 	C\int_{\{|x|-1< |y| \leq |x|\}\cap A_r} \frac{e^{\frac{-|y|}{m}}\xi(y-x)}{|y|^\frac{n-1}{2}}dy\\\notag
	&\leq& 	C \xi(x)\int_{\{|x|-1< |y| \leq |x|\}\cap A_r} \xi(y-x)dy\\
	&\leq& 	C \xi(x)\int_{B_r^c} \xi(y)dy
\end{eqnarray*}
which, thanks again to the vanishing of $\int_{B_r^c} \xi(y)dy$ as $r\to \infty$, concludes the proof.
\end{proof}

\begin{lemma}\label{lemma exbm}	
  Under the hypothesis of Theorem \ref{thm gen}, there exists $T>0$ sufficiently large such that for $t\geq T$
  \begin{enumerate}
      \item \eqref{eq bestmatching} has a solution and $\lim_{t\to \infty} \Gamma(t)=0$,
      \item $M$-bubbles solving \eqref{eq bestmatching} are $\nu(t)$-interacting with $\lim_{t\to \infty}\nu(t)=0$,
      \item the system of equations \eqref{eq definitionalpha} has a unique solution vector $(\alpha_1(t), \cdots, \alpha_M(t))$,
      \item and $\lim_{t \to \infty} \alpha_i(t)=1$ for $i=1,\cdots, M.$
  \end{enumerate}
\end{lemma}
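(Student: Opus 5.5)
The plan is to derive all four items from the subsequential bubbling of Proposition \ref{eq Jbubbling}, together with a contradiction argument over sequences of times. Note that $\tau_{x^0}\xi$ there is absent in the $\mathbb{SB}_M$ normalization, so the statement is read as $u(t_k)$ being $W^{2,2}$-close to $M$ translates with diverging centers.

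\emph{Item (1), $\Gamma(t)\to 0$.} Suppose not. Then there are $T_k\to\infty$ with $\inf_{\mathbb{SB}_M}\|u(T_k)-\theta\|_{L^2}\ge \varepsilon_0$. Applying Proposition \ref{eq Jbubbling} along a subsequence produces elements of $\mathbb{SB}_M$ at $L^2$-distance $o(1)$, which is a contradiction. Hence $\Gamma(t)\to0$.

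\emph{Existence of a minimizer in \eqref{eq bestmatching}.} Fix $t$ large and take a minimizing sequence of centers $(x^1_l,\dots,x^M_l)$. The task is to exclude loss of compactness, i.e.\ a subfamily of centers escaping to infinity or two centers merging at bounded distance from a different configuration. Two facts are used.
\begin{itemize}
\item If a group of centers escapes to infinity relative to the others, then $\|u-\theta_l\|_{L^2}^2 \to \|u-\theta'\|^2_{L^2}+k\|\xi\|_{L^2}^2$, where $\theta'$ is built from the remaining bubbles. This uses $u\in L^2$ and the vanishing of the cross terms by Lemma \ref{lemma pairwise inter}.
\item Since $\Gamma(t)\le\delta\ll\|\xi\|_{L^2}$, such a limit cost exceeds the value achieved by the near-optimal configuration supplied by bubbling.
\end{itemize}
Therefore the centers stay in a bounded set, modulo the configuration being bounded relative to $u$'s mass distribution. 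By continuity a minimizer exists. Merging is allowed in the closed set $(\R^n)^M$, so it causes no problem for existence.

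\emph{Item (2), $\nu(t)\to0$.} Any minimizer $\theta(t)$ satisfies $\|\theta(t)-\sum_i\tau_{x_k^i}\xi\|_{L^2}\le 2\Gamma+o(1)$, where the second sum is the bubbling profile with well-separated centers. I claim this forces the minimizing centers to be pairwise far apart. Argue by contradiction: suppose two centers stay at bounded distance along some $t_k\to\infty$. Pass to a limit configuration, translated so that it is centered at a cluster. By local compactness, a sum of $m\ge2$ translates of $\xi$ with bounded mutual distances would equal, in $L^2_{loc}$, a single translate of $\xi$ (the bubbling profile near that cluster). Comparing total $L^2$ masses, or the sup norm $\max\xi$ versus a strictly larger value of an overlapping sum of positive functions, gives a contradiction. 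Then Lemma \ref{lemma pairwise inter}, through \eqref{eq interaction function}, gives $\nu(t)\lesssim\sum_{i<j}\xi(|x^i-x^j|)\to0$.

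\emph{Items (3)--(4).} Write \eqref{eq definitionalpha} as the linear system $A(t)\alpha=b(t)$, with
\[
A_{ij}=D^2J(\tau_{x^i}\xi)(\tau_{x^i}\xi',\tau_{x^j}\xi),\qquad b_i=D^2J(\tau_{x^i}\xi)(\tau_{x^i}\xi',u).
\]
The diagonal entries are $A_{ii}=Q(\xi',\xi)>0$ by Proposition \ref{thm nondegneracy}(1). The off-diagonal entries are bounded by $C\int|\xi'(\cdot-x^i)|\,(|\nabla\tau_{x^j}\xi|+\tau_{x^j}\xi)$. By \eqref{eq tail zeta'} and \eqref{eq interaction function} this is $\lesssim\nu(t)\to0$. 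Hence $A(t)$ is a small perturbation of $Q(\xi',\xi)\,\mathrm{Id}$, which gives unique solvability for $t\ge T$.

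For the limit, split $u=\theta+(u-\theta)$. First, $b_i=\sum_jA_{ij}+O(\|u-\theta\|_{H^1})$. The quantity $\|u-\theta\|_{H^1}$ tends to zero: $\|u-\theta\|_{L^2}=\Gamma\to0$, and by the uniform $W^{2,2}$ bounds from parabolic regularity, interpolation upgrades this to $H^1$. Consequently $\alpha=A^{-1}b\to(1,\dots,1)$.

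The main obstacle I expect is item (2): excluding clustering of the minimizing centers. This is the step that genuinely uses the positivity and the single-bump structure of $\xi$, beyond soft compactness.
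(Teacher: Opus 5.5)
Your proposal is correct and follows essentially the same route as the paper: contradiction along sequences of times via Proposition \ref{eq Jbubbling}, excluding escaping centers (each costs at least $\|\xi\|_{L^2}$ in the limit) to obtain a minimizer, and treating \eqref{eq definitionalpha} as a small perturbation of the invertible diagonal system with entries $Q(\xi',\xi)>0$. Your argument is in fact more complete than the paper's in two places. In item (2), the paper only asserts that the centers of the best-matching $\theta(t_k)$ drift apart, while your local-limit comparison justifies it: a sum of $m\ge 2$ positive translates of $\xi$ cannot equal a single translate or $0$. In items (3)--(4), your interpolation step justifies the $H^1$-closeness of $u$ to $\theta(t)$, which the paper instead borrows from the bubbling profile.
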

\begin{proof}
Let us start proving (1). Arguing by contradiction, we can find a sequence of times $\{T_k\}_{k\in \N}$ such that \eqref{eq bestmatching} is not attained. By Proposition \ref{eq Jbubbling}, we can extract a subsequence $\{t_k\}_{k\in \N}$ and a sequence of simple $M$-bubbles $\{\theta_k\}_{k\in \N}$ such that $u_k(x)=u(x,t_k)$ satisfies $\lim_{k\to \infty} \Vert u_k -\theta_k\Vert_{L^2(\R^n)} =0 $. Take $k$ large enough such that $\Vert u_k -\theta_k\Vert_{L^2(\R^n)} \leq \frac{1}{2} \Vert \xi\Vert_{L^2(\R^n)}$. We claim that $\Gamma(t_k)$ admits a solution, leading to the desired contradiction. To verify the validity of this claim it suffices to show that if
\begin{equation*}
    \psi_j = \sum_{i=1}^M  \tau_{x^i_j}\big[\xi\big],
\end{equation*}
is a minimizing sequence for $\Gamma(t_k)$, then the centers $\{x_j^k\}$ must remain bounded in $\R^n$ for $j\in \N$ and $k=1,\cdots, M$. If this were not the case, we would have that $\liminf_{j\to \infty} \Vert u_k -\psi_j\Vert_{L^2(\R^n)} \geq \Vert\xi\Vert_{L^2(\R^n)}$ contradicting the fact that $\{\psi_j\}$ is a minimizing sequence for $\Gamma(t_k)$.
Arguing similarly, by contradiction, it follows that that $\lim_{t \to \infty} \Gamma(t)=0$.\\

The proofs of (2), (3), and (4) follow by a very similar argument. We proceed to outline one by one.\\

In the case of (2), if we assume that $\theta(t_k)$ is a minimizer of $\Gamma(t_k)$ which are not $\nu$-interacting for some $\nu>0$ for $t_k$ sufficiently large, up to extracting a subsequence, in virtue of Proposition \ref{eq Jbubbling} we have that $\Vert u_k -\theta(t_k)\Vert_{L^2(\R^n)}$ with the centers of the $M$-bubble $\theta(t_k)$ drifting apart as $k\to \infty$. This, in virtue of Proposition \ref{prop radialuniqueness}, we have that $\theta(t_k)$ is $\nu(t_k)$-interacting with $\lim_{t\to \infty} \nu(t_k)=0$.\\

In the case of (3) or (4), we assume the existence of a sequence of times where $(\alpha_1(t), \cdots, \alpha_M(t))$ either (3) or (4) does not hold. In either case, we invoke Proposition \ref{eq Jbubbling} to guarantee that along such sequence, namely $\{t_k\}_{k\in \N}$, $\lim_{k\to \infty} \Vert u_k -\theta(t_k)\Vert_{W^{2,2}\cap C^2(\R^n)}=0$. Hence, this property combined with Proposition \ref{prop radialuniqueness} implies that the systems of equations \eqref{eq definitionalpha} can be rewritten as
\begin{eqnarray}\label{eq system}   
\alpha_i(t_k) Q_{ii}^k
+\sum_{j\neq i} \alpha_j(t_k) Q_{ij}^k= Q_{ii}^k +E_k \quad \mbox{for $i=1, \cdots, M$},
\end{eqnarray}
with $Q_{ij}^k = D^2J(\tau_{x^i(t_k)}\big[\xi'\big])(\tau_{x^i(t_k)}\big[\xi'\big], \tau_{x^j(t_k)}\big[\xi\big])$ and $E_k \to 0$ as $k\to \infty$. By proposition \ref{thm nondegneracy} $Q_{ii}^k = D^2J(\xi')(\xi, \xi')>0$, whilst $\lim_{k\to \infty} Q_{ij}^k = 0$ when $j\neq i$. Thus, \eqref{eq system} is a perturbation of a linear system whose coefficient matrix is an invertible diagonal matrix, which implies that $\alpha_i(t_k)$ is uniquely determined. From \eqref{eq system}, it is also clear that $\lim_{k\to \infty}\alpha_i(t_k)\to 1$, yielding the desired contradiction.
\end{proof}

Our last auxiliary result is a quantitative separation property. Let 
\(P=\{x_1,\dots,x_M\}\subset \mathbb R^n\) be a collection of distinct points, and let \(y\in P\) be an extremal point of \(P\), meaning that \(y\) cannot be written as a non-trivial convex combination of the elements of \(P\setminus\{y\}\). Consider the set of directions
\[
V_y
=
\left\{
\frac{x_i-y}{|x_i-y|}
\,\Bigg|\,
x_i\in P\setminus\{y\}
\right\}
\subset \mathbb S^{n-1}.
\]
Since \(y\) is extremal for \(P\), the set \(V_y\) is strictly contained in a hemisphere. Indeed, otherwise \(0\) would belong to the convex hull of \(V_y\). Hence there would exist coefficients \(\lambda_i\ge 0\), with \(\sum_{x_i\ne y}\lambda_i=1\), such that
\[
\sum_{x_i\neq y}
\lambda_i
\frac{x_i-y}{|x_i-y|}
=0.
\]
Rearranging terms gives
\[
\frac{1}{S}
\sum_{x_i\neq y}
\frac{\lambda_i}{|x_i-y|}
x_i
=
y,
\qquad
S
=
\sum_{x_i\neq y}
\frac{\lambda_i}{|x_i-y|},
\]
which expresses \(y\) as a non-trivial convex combination of the remaining points of \(P\), contradicting its extremality.

The following lemma provides a quantitative strengthening of this geometric observation.

\begin{lemma}\label{lemma ch}
Let \(P=\{x_1,\dots,x_M\}\subset \mathbb{R}^n\) be a collection of distinct points. 
Then there exists a constant \(D=D(M,n)>0\) and a point \(y\in P\) for which one can find a unit vector \(e\in \mathbb{S}^{n-1}\) satisfying
\begin{equation}\label{eq hyper bound}
(x_i-y)\cdot e 
\ge 
\frac{1}{D(M,n)}\,|x_i-y|,
\qquad 
\text{for every } x_i\in P\setminus\{y\}.
\end{equation}
Moreover, if the points of \(P\) satisfy the separation condition
\[
|x_i-x_j|\ge L 
\qquad \text{for all } i\ne j,
\]
then there exists a constant \(D_2=D_2(M,n)>0\) such that
\begin{equation}\label{eq hyper bound neigh}
(x_i-z)\cdot e 
\ge 
\frac{1}{D_2(M,n)}\,|x_i-z|,
\qquad 
\text{for every } x_i\in P\setminus\{y\},
\end{equation}
whenever \(z\in B_{L'}(y)\), where \(L'=\frac{L}{2D(M,n)}\).
\end{lemma}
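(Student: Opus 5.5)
We take $y$ to be a vertex of the convex hull of $P$ chosen in a quantitatively robust way, and obtain $D(M,n)$ from a compactness argument in scale-invariant configuration space. First, by translation and scaling invariance of \eqref{eq hyper bound}, we normalize so that $\operatorname{diam} P = 1$. The natural choice is a point of $P$ farthest from the centroid, or more simply a point $y$ maximizing $x\cdot e_0$ for a fixed direction. A plain maximizer of a linear functional does not give a uniform angle, since another point could lie almost on the supporting hyperplane. We therefore choose $y$ together with $e$ by maximizing
\[
\Phi(y,e)=\min_{x_i\in P\setminus\{y\}} \frac{(x_i-y)\cdot e}{|x_i-y|}
\]
over $y\in P$ and $e\in\mathbb S^{n-1}$. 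This maximum exists because $\Phi$ is continuous in $e$ and $P$ is finite. By the extremality discussion preceding the lemma, $\Phi>0$ at any extremal $y$, so the optimum $\Phi^*(P)$ is positive for every configuration of distinct points.

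The key step, and the main obstacle, is a lower bound for $\Phi^*(P)$ depending only on $(M,n)$. Compactness of configurations fails, because points may collide after normalization. We handle this by induction on $M$ combined with a clustering argument. The case $M=2$ is trivial with $D=1$. For general $M$, we group the points into clusters at a scale where inter-cluster distances are much larger than intra-cluster diameters. Such a scale exists among the $M-1$ gaps obtained by sorting the pairwise distances: one of the consecutive ratios is bounded below by a constant $c(M)$, or else everything is comparable and we are in a compact regime.

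\emph{Compact regime.} All pairwise distances are comparable to $1$, so the normalized configurations lie in a compact set of distinct-point configurations. The function $\Phi^*$ is lower semicontinuous there (it is a max of continuous mins) and positive, hence bounded below.

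\emph{Clustered regime.} We apply the induction hypothesis to the set of cluster representatives, which has fewer than $M$ points, to obtain an extremal cluster and a direction $e'$. Inside that cluster we apply the induction hypothesis again to get a point $y$ and a direction $e''$. We then take $e=e'+\epsilon e''$, normalized, with $\epsilon$ small depending on the constants. Points in other clusters see $y$ at angle controlled by $e'$ up to an error of order (cluster diameter)/(gap). Points in the same cluster are controlled by $e''$, with the $e'$ component contributing at most $|x_i-y|$ times something harmless. The delicate bookkeeping is that the $e'$ component could be negative of size $|x_i-y|$ inside the cluster. To avoid this, the inner step should be applied to the sub-configuration after restricting to points extremal in direction $e'$, or we should use the formulation that $y$ maximizes a linear functional, so that the $e'$ component is nonnegative. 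This is where I expect most of the care to be needed.

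Finally, \eqref{eq hyper bound neigh} follows by perturbation. For $z\in B_{L'}(y)$ and $x_i\neq y$ we have $|x_i-y|\ge L$. Then
\[
(x_i-z)\cdot e \ge (x_i-y)\cdot e - L' \ge \frac{|x_i-y|}{D}-\frac{L}{2D}\ge \frac{|x_i-y|}{2D},
\]
and $|x_i-z|\le |x_i-y|+L'\le 2|x_i-y|$. This gives the bound with $D_2=4D$.
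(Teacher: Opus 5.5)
Your deduction of \eqref{eq hyper bound neigh} from \eqref{eq hyper bound} is correct and is the paper's argument. There is, however, a genuine gap in \eqref{eq hyper bound}. The combination step you defer is the whole difficulty, and it does not close with your induction hypothesis (some $y$ and some direction, for fewer points). Neither of your suggested repairs closes it either.

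For $e\propto e'+\epsilon e''$ the far clusters force $\epsilon\lesssim 1/D'$. But nothing stops the inner step from returning a $y$ from which a cluster point has $(x_i-y)\cdot e'\approx-|x_i-y|$. Take the cluster $\{(0,0),(-1,0)\}$, a far point $(R,0)$ and $e'=(1,0)$. The inner step may legitimately return $y=(0,0)$, $e''=(-1,0)$, and from $y=(0,0)$ no direction works at all.

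Choosing instead $y$ to minimize $x\cdot e'$ on the cluster only makes the $e'$-components nonnegative, not quantitatively positive. Take the cluster $\{(0,0),(\eta,1),(\eta,-1)\}$ with $0<\eta\ll 1$, a far point $(R,0)$ and $e'=(1,0)$, which is a legitimate output of the outer step. The minimizer is $y=(0,0)$. From it the other two cluster points lie in the nearly antipodal directions $(\eta,\pm1)$, so $\max_{e}\Phi(y,e)\le \eta/\sqrt{1+\eta^2}$. By contrast, within the cluster $y=(\eta,1)$ with $e=(1,-1)/\sqrt2$ gives a constant close to $1/\sqrt2$. So what is missing is a rule for choosing $y$ when some points lie \emph{almost} on the supporting hyperplane.

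Within your scheme, one fix is to strengthen the induction hypothesis so that the direction can be prescribed. The statement would be: for every $e_0\in\mathbb S^{n-1}$ and $r\in(0,1]$ there are $y\in P$ and $e$ with $|e-e_0|\le r$ and $\Phi(y,e)\ge c(M,n,r)>0$. Applying this to the extremal cluster with center $e'$ and radius of order $c'$ keeps the far clusters positive. Positivity in the compact regime then follows by taking $y$ lexicographically minimal in an orthonormal basis starting with $e_0$, and a small tilt of $e_0$.

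The paper's route is simpler and uses no clustering or compactness; it adds one point at a time. Take $(y_0,e_0)$ for $P\setminus\{x_M\}$ and set $K=\max\{4D(M-1,n),2\}$.
\begin{itemize}
\item If $(x_M-y_0)\cdot e_0\ge|x_M-y_0|/K$, it keeps $(y_0,e_0)$.
\item If $(x_M-y_0)\cdot e_0\le-|x_M-y_0|/K$, it switches to $y=x_M$, $e=e_0$. This works because $(x-x_M)\cdot e_0\ge\frac1K(|x-y_0|+|y_0-x_M|)\ge\frac1K|x-x_M|$.
\item Otherwise $x_M-y_0$ is almost orthogonal to $e_0$, and it tilts $e_0$ by $2/K$ toward $x_M-y_0$. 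This makes $x_M$ positive while the old points keep a fixed fraction of their margin, since $2/K\le 1/(2D(M-1,n))$.
\end{itemize}
Only directions from $y$ and the triangle inequality enter, so widely differing scales never cause degeneration. The resulting constant is explicit and independent of $n$.
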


\begin{proof}
We first prove \eqref{eq hyper bound} by induction on \(M\). If \(M=2\), the statement is immediate: taking
\[
e=\frac{x_2-x_1}{|x_2-x_1|},
\qquad
y=x_1,
\]
the inequality holds with \(D=1\).

Assume now that the statement holds for any collection of \(M-1\) points.  
Apply the inductive hypothesis to \(P\setminus\{x_M\}\). Then there exist 
\(y_0\in P\setminus\{x_M\}\) and \(e_0\in\mathbb S^{n-1}\) such that
\begin{equation}\label{eq hyper bound0}
(x-y_0)\cdot e_0
\ge
\frac{1}{D(M-1,n)}|x-y_0|,
\qquad
\text{for all } x\in P\setminus\{x_M,y_0\}.
\end{equation}

Set
\[
K=\max\{4D(M-1,n),2\}.
\]
We distinguish two cases.

\medskip
\noindent
\textit{Case 1:} Assume
\begin{equation}\label{eq noortho}
\frac{|(x_M-y_0)\cdot e_0|}{|x_M-y_0|}
\ge
\frac{1}{K}.
\end{equation}

If \((x_M-y_0)\cdot e_0>0\), then \eqref{eq hyper bound} holds with 
\(y=y_0\), \(e=e_0\), and \(D(M,n)=K\).

If \((x_M-y_0)\cdot e_0<0\), we set \(y=x_M\) and \(e=e_0\).  
From \eqref{eq noortho} we obtain
\[
(y_0-x_M)\cdot e_0
\ge
\frac{1}{K}|x_M-y_0|.
\]
For \(x\in P\setminus\{x_M,y_0\}\), we deduce from \eqref{eq hyper bound0} that
\[
(x-x_M)\cdot e_0
=
(x-y_0)\cdot e_0+(y_0-x_M)\cdot e_0
\ge
\frac{1}{D(M-1,n)}|x-y_0|
+
\frac{1}{K}|x_M-y_0|.
\]
Using the triangle inequality and the definition of \(K\), we deduce
\[
(x-x_M)\cdot e_0
\ge
\frac{1}{K}|x-x_M|.
\]

\medskip
\noindent
\textit{Case 2:} Assume
\begin{equation}\label{eq aortho}
\frac{|(x_M-y_0)\cdot e_0|}{|x_M-y_0|}
<
\frac{1}{K}.
\end{equation}
Let
\[
e_1=\frac{x_M-y_0}{|x_M-y_0|},
\qquad
\alpha=\frac{2}{K},
\qquad
e=\frac{e_0+\alpha e_1}{|e_0+\alpha e_1|}.
\]
Then \(|e_0\cdot e_1|\le 1/K\), and therefore
\begin{equation}\label{eq e1e2}
|e_0+\alpha e_1|^2
\le
1+\frac{8}{K^2}
\le 4.
\end{equation}

A direct computation gives
\[
(x_M-y_0)\cdot e
=
|x_M-y_0|
\frac{e_0\cdot e_1+\alpha}{|e_0+\alpha e_1|}
\ge
\frac{1}{2K}|x_M-y_0|.
\]

If \(x\in P\setminus\{x_M,y_0\}\), writing 
\(\nu_x=\frac{x-y_0}{|x-y_0|}\), the inductive hypothesis gives
\(\nu_x\cdot e_0\ge 1/D(M-1,n)\). Hence
\[
(x-y_0)\cdot e
=
|x-y_0|
\frac{e_0\cdot\nu_x+\alpha e_1\cdot\nu_x}{|e_0+\alpha e_1|}
\ge |x-y_0|
\frac{1/D(M-1,n)-\alpha }{2} \ge
\frac{1}{4D(M-1,n)}|x-y_0|.
\]

This completes the proof of \eqref{eq hyper bound}.

\medskip

We now prove \eqref{eq hyper bound neigh}.  
Let \(z\in B_{L'}(y)\) and write \(z=y+z_0\) with \(|z_0|\le L'\).  
For \(x\in P\setminus\{y\}\),
\[
(x-z)\cdot e
=
(x-y)\cdot e - z_0\cdot e
\ge
\frac{1}{D(M,n)}|x-y|-|z_0|.
\]
Since \(|x-y|\ge L\) and 
\(L'=\frac{L}{2D(M,n)}\), we obtain
\[
(x-z)\cdot e
\ge
\frac{1}{2D(M,n)}|x-y|.
\]
Finally,
\[
|x-z|
\le
|x-y|+|z-y|
\le
|x-y|+L'
\le
\Bigl(1+\frac{1}{2D(M,n)}\Bigr)|x-y|,
\]
which yields \eqref{eq hyper bound neigh} with a suitable constant 
\(D_2(M,n)>0\).
\end{proof}

\bibliographystyle{alpha}
\bibliography{references}

@article{aryan2023stability,
  title={Stability of Hardy--Littlewood--Sobolev inequality under bubbling},
  author={Aryan, Shrey},
  journal={Calculus of Variations and Partial Differential Equations},
  volume={62},
  number={8},
  pages={223},
  year={2023},
  publisher={Springer}
}

@incollection{lions1988positive,
  author       = {Lions, Pierre-Louis},
  title        = {On positive solutions of semilinear elliptic equations in unbounded domains},
  booktitle    = {Nonlinear Diffusion Equations and Their Equilibrium States II},
  editor       = {Ni, W.-M. and Peletier, L. A. and Serrin, J.},
  series       = {Mathematical Sciences Research Institute Publications},
  volume       = {13},
  pages        = {85--122},
  year         = {1988},
  publisher    = {Springer, New York},
  doi          = {10.1007/978-1-4613-9608-6_6}
}

@inproceedings{berestycki1980existence,
  title={Existence of stationary states in nonlinear scalar field equations},
  author={Berestycki, H. and Lions, P.-L.},
  booktitle={Bifurcation Phenomena in Mathematical Physics and Related Topics},
  pages={269--292},
  year={1980},
  publisher={Springer}
}

@article{bonforte2024asymptotic,
  title={Asymptotic behavior of a diffused interface volume-preserving mean curvature flow},
  author={Bonforte, Matteo and Maggi, Francesco and Restrepo, Daniel},
  journal={arXiv preprint arXiv:2407.18868},
  year={2024}
}

@book{bonforte2025stability,
  title={Stability in Gagliardo--Nirenberg--Sobolev inequalities: flows, regularity, and the entropy method},
  author={Bonforte, Matteo and Dolbeault, Jean and Nazaret, Bruno and Simonov, Nikita},
  volume={308},
  number={1554},
  year={2025},
  publisher={American Mathematical Society}
}

@article{busca2002convergence,
  title={Convergence to equilibrium for semilinear parabolic problems in $\mathbb{R}^N$},
  author={Busca, J. and Jendoubi, M. A. and Pol{\'a}{\v{c}}ik, P.},
  journal={Journal of Differential Equations},
  year={2002}
}

@article{ciraolo2018quantitative,
  title={A quantitative analysis of metrics with almost constant positive scalar curvature, with applications to fast diffusion flows},
  author={Ciraolo, Giulio and Figalli, Alessio and Maggi, Francesco},
  journal={International Mathematics Research Notices},
  volume={2018},
  number={21},
  pages={6780--6797},
  year={2018}
}

@article{cortazar1999uniqueness,
  title={The problem of uniqueness of the limit in a semilinear heat equation},
  author={Cort{\'a}zar, Carmen and del Pino, Manuel and Elgueta, Manuel},
  journal={Communications in Partial Differential Equations},
  volume={24},
  number={11--12},
  pages={2147--2172},
  year={1999}
}

@article{chen1991uniqueness,
  title={Uniqueness of the ground state solutions of ${\Delta} u + f(u) = 0$ in $\mathbb{R}^n$, $n \ge 3$},
  author={Chen, Chiun-Chuan and Lin, Chang-Shou},
  journal={Communications in Partial Differential Equations},
  volume={16},
  number={8--9},
  pages={1549--1572},
  year={1991}
}

@article{deng2025sharp,
  title={Sharp quantitative estimates of Struwe's decomposition},
  author={Deng, Bin and Sun, Liming and Wei, Jun-Cheng},
  journal={Duke Mathematical Journal},
  volume={174},
  number={1},
  pages={159--228},
  year={2025}
}

@article{de2023stability,
  title={Stability with explicit constants of the critical points of the fractional Sobolev inequality and applications to fast diffusion},
  author={De Nitti, Nicola and K{\"o}nig, Tobias},
  journal={Journal of Functional Analysis},
  volume={285},
  number={9},
  pages={110093},
  year={2023}
}

@article{feireisl1997threshold,
  title={Convergence to a ground state as a threshold phenomenon in nonlinear parabolic equations},
  author={Feireisl, Eduard and Petzeltov{\'a}, Hana},
  journal={Differential and Integral Equations},
  volume={10},
  number={1},
  pages={181--196},
  year={1997}
}

@article{feireisl1997long,
  title={On the long time behaviour of solutions to nonlinear diffusion equations on $\mathbb{R}^n$},
  author={Feireisl, Eduard},
  journal={Nonlinear Differential Equations and Applications (NoDEA)},
  volume={4},
  number={1},
  pages={43--60},
  year={1997}
}

@article{figalli2020sharp,
  title={On the sharp stability of critical points of the Sobolev inequality},
  author={Figalli, Alessio and Glaudo, Federico},
  journal={Archive for Rational Mechanics and Analysis},
  volume={237},
  number={1},
  pages={201--258},
  year={2020}
}

@article{foldes2011convergence,
  title={Convergence to a steady state for asymptotically autonomous semilinear heat equations on $\mathbb{R}^N$},
  author={F{\"o}ldes, Juraj and Pol{\'a}{\v{c}}ik, Pavol},
  journal={Journal of Differential Equations},
  volume={251},
  number={7},
  pages={1903--1922},
  year={2011}
}

@article{gidas1981symmetry,
  title={Symmetry of positive solutions of nonlinear elliptic equations in $\mathbb{R}^n$},
  author={Gidas, Basilis},
  journal={Advances in Mathematics Supplementary Studies},
  pages={369--402},
  year={1981}
}

@article{maggi2024uniform,
  title={Uniform stability in the Euclidean isoperimetric problem for the Allen--Cahn energy},
  author={Maggi, Francesco and Restrepo, Daniel},
  journal={Analysis \& PDE},
  volume={17},
  number={5},
  pages={1761--1830},
  year={2024}
}

@article{mcleod1993uniqueness,
  title={Uniqueness of positive radial solutions of ${\Delta} u + f(u)=0$ in $\mathbb{R}^n$. II},
  author={McLeod, Kevin},
  journal={Transactions of the American Mathematical Society},
  volume={339},
  number={2},
  pages={495--505},
  year={1993}
}

@article{ni1993locating,
  title={Locating the peaks of least-energy solutions to a semilinear Neumann problem},
  author={Ni, Wei-Ming and Takagi, Izumi},
  journal={Duke Mathematical Journal},
  volume={72},
  number={1},
  pages={247--281},
  year={1993}
}

@article{struwe1984global,
  title={A global compactness result for elliptic boundary value problems involving limiting nonlinearities},
  author={Struwe, Michael},
  journal={Mathematische Zeitschrift},
  volume={187},
  number={4},
  pages={511--517},
  year={1984}
}

\end{document}